\theoremstyle{plain}
\newtheorem{thm}{Theorem}[section]
\newtheorem{cor}[thm]{Corollary}
\newtheorem{lem}[thm]{Lemma}
\newtheorem{prop}[thm]{Proposition}
\newtheorem{rem}[thm]{Remark}
\def\sqr#1#2{{\vcenter{\vbox{\hrule height.#2pt
              \hbox{\vrule width.#2pt height#1pt \kern#1pt \vrule
width.#2pt}
              \hrule height.#2pt}}}}
\def\be{\begin{equation}}
\def\ee{\end{equation}}
\def\ga{{\gamma}}
\def\om{{\omega}}
\def\ep{{\epsilon}}
\def\Sp{{\mathrm {Sp}}}
\def\lb{\label}
\def\ga{{\gamma}}
\def\R{{\Bbb R}}
\def\U{{\Bbb U}}
\def\<{{\langle}}
\def\>{{\rangle}}
\def\no{\noindent}
\def\bs{\bigskip}
\def\dim{\hbox{\rm dim$\,$}}
\def\({\Big (}
\def\){\Big )}
\def\[{\Big[}
\def\]{\Big]}
\def\be{\begin{equation}}
\def\bel{\begin{equation}\label}
\def\ee{\end{equation}}
\def\bea{\begin{eqnarray}}
\def\eea{\end{eqnarray}}
\def\bt{\begin{theorem}}
\def\et{\end{theorem}}
\def\bc{\begin{corollary}}
\def\ec{\end{corollary}}
\def\bl{\begin{lemma}}
\def\el{\end{lemma}}
\def\bp{\begin{proposition}}
\def\ep{\end{proposition}}
\def\br{\begin{remark}}
\def\er{\end{remark}}
\def\ba{\begin{array}}
\def\ea{\end{array}}
\def\bd{\begin{definition}}
\def\ed{\end{definition}}
\begin{document}

\title{\bf  Linear stability of the elliptic  relative equilibrium with  $(1+n)$-gon central configurations in planar $n$-body problem }
\author{Xijun Hu$^{a, 1}$\thanks{Partially supported
by NSFC (Nos. 11790271, 11425105), E-mail:xjhu@sdu.edu.cn.}
\quad Yiming Long$^{b, c, 2}$ \thanks{Partially supported
by NSFC (Nos.11131004, 11671215 and 11790271), LPMC of Ministry of Education of China, Nankai University, Nankai Zhide Foundation, and the Beijing Advanced Innovation Center for Imaging Technology at Capital Normal University, E-mail:longym@nankai.edu.cn.}
\quad Yuwei Ou$^{d, 3}$ \thanks{Partially supported
by NSFC (Nos.11801583,11671215), E-mail:ouyw3@mail.sysu.edu.cn.}
\\ \\
$^{a}$ Department of Mathematics, Shandong University
Jinan, Shandong 250100\\
$^{b}$ Chern Institute of Mathematics and LPMC, Nankai University
Tianjin 300071\\
$^{c}$ Beijing Advanced Innovation Center for Imaging Technology,\\ Capital Normal University, Beijing 100048\\
$^{d}$ School of Mathematics(Zhuhai),  Sun Yat-Sen University, Zhuhai, Guangdong
519082\\
The People's Republic of China
}
\date{}
\maketitle
\begin{abstract} We study the linear stability of  $(1+n)$-gon elliptic  relative equilibrium (ERE for short),
that is the Kepler homographic solution with the $(1+n)$-gon central configurations. We show that for $n\geq 8$
and any eccentricity $e\in[0,1)$, the $(1+n)$-gon ERE is stable when the central mass $m$ is large enough. Some
linear instability results are given when $m$ is small.
\end{abstract}

\bs

\no{\bf AMS Subject Classification:} 37J25,  70F10,   37J45, 53D12

\bs

\no{\bf Key Words:}  linear stability, elliptic relative equilibrium, Maslov index, planar $n$-body problem

\section{Introduction}%Section 1.

For $n$ particles with masses $m_1,\cdots,m_n$, let $q_1,\cdots,q_n\in \mathbb{R}^2$ be the position vectors. Let
\bea U=\sum_{1\leq i< j\leq n} \frac{m_im_j}{\|q_i-q_j\|} \eea
be the negative potential function defined on the configuration space  $$
\Lambda=\{x=(x_1,\cdots,x_n)\in\mathbb{R}^{2n}\setminus\triangle:
\sum_{i=1}^nm_ix_i=0 \},$$ where
$\triangle=\{x\in\mathbb{R}^{2n}:\exists i\neq j,x_i=x_j \}$ is the
collision set.  Obviously, the orbits of the $n$ bodies satisfy the following Newton equation
\bea m_i\ddot{q}_i(t)=\frac{\partial
U}{\partial q_i}(q_1,...,q_n). \label{1.2} \eea

An elliptic relative equilibrium is a special solution of the planar $n$-body problem, which is generated by a central
configuration. A central configuration is formed by $n$ position vectors $(q_1,...,q_n)=(a_1,...,a_n)$ which satisfy
\bea -\lambda m_jq_j=\frac{\partial U}{\partial q_j}(q_1,...,q_n) \eea
for some constant $\lambda$.  An easy computation shows that $\lambda=U(a)/\cal{I}(a)>0$, where
$\cal{I}(a)=\sum m_j\|a_j\|^2$ is the moment of inertia. In other words, a central configuration with
$\cal{I}(a)=1$ is a critical point of the function $U$ restricted to the set
$\mathcal{E}=\{x\in\Lambda \mid \cal{I}(x)=1\}$.

A planar central configuration of the $n$-body problem gives rise to a solution of (\ref{1.2}) where each particle moves on a
specific Keplerian orbit while the totality of the particles move on a homothety motion. If the Keplerian orbit
is elliptic then the solution is an equilibrium in pulsating coordinates so we call this solution an elliptic
relative equilibrium (ERE for short), and a relative equilibrium in case $e=0$ (cf. \cite{MS}).

From Meyer-Schmidt \cite{MS}, there are two four-dimensional invariant symplectic subspaces, $E_1$ and $E_2$,
and they are associated to the translation symmetry, dilation and rotation symmetry of the system. In other words,
there is a symplectic coordinate system in which the linearized system of the planar $n$-body problem decouples
into three subsystems on $E_1, E_2$ and $E_3 = (E_1 \cup E_2)^\perp$, where $\perp$ denotes the symplectic
orthogonal complement. A symplectic matrix $M$ is called spectrally stable if all eigenvalues of $M$ belong to the
unit circle $\mathbb{U}$ of the complex plane. $M$ is called linearly stable if it is spectrally stable and semi-simple.
While $M$ is called hyperbolic if no eigenvalues of $M$ are on $\mathbb{U}$. The ERE is called hyperbolic
(stable, resp.) if the monodromy matrix $M$ restricted to $E_3$, $M|E_3$,
is hyperbolic (stable, resp.).

More precisely, Let $I_j$ be the identity matrix on $\mathbb{R}^j$ and
$J_{2j}=\left( \begin{array}{cccc}0_j& -I_j \\
                                  I_j& 0_j \end{array}\right)$. Here
we always omit the subscript of $J$ when there is no confusion.  Let ($\R^{2n}, \omega$) with $\omega(x,y)=(Jx,y)$
be the standard symplectic space, and we denote by
$$   \Sp(2n)=\{M\in GL(2n),  M^TJM=J\}  $$
the symplectic group. As in \cite{Lon4}, for $M_1=\left( \begin{array}{cccc}A_1& A_2 \\
A_3 & A_4 \end{array}\right)$,  $M_2=\left( \begin{array}{cccc}B_1& B_2 \\
B_3 & B_4 \end{array}\right)$, the symplectic sum $\diamond$ is defined by
\bea  M_1\diamond M_2=\left( \begin{array}{cccccccc}A_1& 0 &A_2 & 0 \\
                                                     0 & B_1& 0  & B_2 \\
                                                    A_3& 0 &A_4 & 0 \\
                                                     0 & B_3& 0  & B_4 \end{array}\right).   \eea
For $M_1, M_2\in \Sp(2n)$, we denote by $M_1\approx M_2$ if there exists a $P\in\Sp(2n)$, such that $M_1=P^{-1}M_2P$ holds.
We set  ${\mathcal R}(\theta_{1})=\left( \begin{array}{cc} \cos \theta_{1} & -\sin \theta_{1}\\
\sin \theta_{1} & \cos \theta_{1}\end{array}\right)\in\Sp(2)$. Then $M\in\Sp(2n)$ is linearly stable if and only if $$M\approx {\mathcal R}(\theta_1)\diamond\cdots\diamond {\mathcal R}(\theta_n), \quad \theta_i\in [0,2\pi).$$
Meyer-Schmidt's result shows that for $T>0$ a $T$-periodic ERE satisfies the linear system
\bea \dot{\xi}=JB \xi,  \eea
with $B=B_1\diamond B_2\diamond B_3$, where $B_1$ is associated to the translation symmetry, $B_2$ is associated to
the dilation and rotation symmetries of the system which is just the linear part of the Kepler orbits, $B_3$ is
the essential part. Let $\ga$ be the fundamental solution of $B_3$, that is $$\dot{\ga}=JB_3\ga,$$ then $\ga(T)$ is
just the monodromy matrix $M$ restricted to $E_3$.

For $n=3$, there are only two kinds of central configurations, the Lagrangian equilateral triangle central
configuration and Euler collinear central configurations.   There are many works on the linear stability of the elliptic Lagrangian orbits and elliptic Euler orbits, please refer to \cite{HLS},  \cite{MSS1}, \cite{MSS2} , \cite{ZL} and reference therein.
For $n\geq 4$, it is difficult to find all central
configurations. It is easy to see that the $(1+n)$-gon central configuration exists for any $n\in\mathbb N$,
where $n$ equal masses $m_{k}$ are at the vertices of a regular $n$-gon with an additional mass $m$ at the center.
Without loss of generality, we set $m_{k}=1$, for $k\in\{1,...,n\}$ and let $m$ represent the mass of the
body at the center. It is natural to treat $m$ as a parameter.

There have existed many works which studied the linear stability of relative equilibria of the $(1+n)$-gon,
i.e., the case with $e=0$. As far as we know, this was first started by Maxwell in his study on the stability of
Saturn's rings (cf. \cite{Max1,Max2}). Moeckel \cite{Moe1} proved that the $(1+n)$-gon is linearly stable for sufficiently
large $m$ only when $n\geq7$. For $n\geq 7$, Roberts found a value $h_n$ which is proportional to $n^3$, and the
$(1+n)$-gon is stable if and only if $m>h_n$ (cf. \cite{Rob1}). For other related works, please refer to \cite{VK} and reference therein.

A question proposed by Moeckel is that for a linearly stable relative equilibrium ($e=0$), is there always a dominant
mass, i.e., a body with a mass which is much larger than the total mass of the other bodies? Another question is
whether the linearly stable relative equilibrium is always a non-degenerate minimum of the $U|_\mathcal{E}$
(cf. \cite{ACS}, Problem 15, 16).

Moeckel's conjecture is true for the relative equilibrium of $(1+n)$-gon, but we are not aware of such a result for
elliptic relative equilibrium. In this paper, we study the linear stability of  $(1+n)$-gon ERE. Our next main Theorems
1.1 and 1.3 show that Moeckel's conjecture is also true when $e>0$, specially Moeckel's conjecture holds for
$(1+n)$-gon EREs when $n\ge 8$.

Since the $(1+n)$-gon possesses a rotational symmetry, the linear system of its essential part can be decomposed into
$[n/2]$ linear sub-systems. By change of variables (cf. \cite{MS}), we can suppose that the linear system of the
essential part of the ERE of the $(1+n)$-gon is given by
\bea \frac{d\gamma}{d\theta}=J_{4(n-1)}B_3(e,\theta) \gamma, \eea
where $e$ is the eccentricity and $\theta\in[0,2\pi]$ is the true anomaly. Then
\bea
B_{3}(e,\theta)=\mathcal{B}_1(e,\theta) \diamond\cdots\diamond \mathcal{B}_{[\frac{n}{2}]}(e,\theta). \lb{bdcom.0} \eea
Let $\ga_l$ be the fundamental solution of $\mathcal{B}_l(e,\theta)$ for $l=1,\cdots,[n/2]$, then
\bea  \ga=\ga_1\diamond\cdots\diamond\ga_{[\frac{n}{2}]}. \eea
Please refer to Theorem \ref{thm3.4} below for the details.

Obviously, $\ga$ is stable if and only if each $\ga_l$ is stable for $l=1,\cdots, [n/2]$.

\begin{thm}\label{th1.1}
For $n\geq4$ and any $e\in [0,1)$, each $\ga_l$ with $l=2,\cdots,[n/2]$ is linearly stable when $m$ is large enough and they have
the normal form below:

i)\ \ For $2\leq l\leq[\frac{n-1}{2}]$, $\ga_{l}(2\pi)\approx {\mathcal R}(\alpha_{l})\diamond {\mathcal R}(\beta_{l})
\diamond {\mathcal R}(\theta_{l})\diamond {\mathcal R}(\phi_{l})$ for some $\alpha_{l}, \beta_{l}, \theta_{l}$ and
$\phi_{l}\in(\pi, 2\pi)$.

ii)\ \ For $n\in2\mathbb{N}, l=[\frac{n}{2}]$, $\ga_{l}(2\pi)\approx {\mathcal R}(\alpha_{l})\diamond {\mathcal R}(\beta_{l})$
for some $\alpha_{l}$ and $\beta_{l}\in(\pi, 2\pi)$.
\\
Moreover, for $n\geq 8$ and $e\in [0,1)$, $\ga_1$ is linearly stable when $m$ is large enough, and $\ga_1(2\pi)
\approx {\mathcal R}(\alpha_{1})\diamond {\mathcal R}(\beta_{1})
\diamond {\mathcal R}(\theta_{1})\diamond {\mathcal R}(\phi_{1})$ for some $\alpha_{1}, \beta_{1}, \theta_{1}$ and
$\phi_{1}\in(\pi, 2\pi)$.
Consequently the $(1+n)$-gon ERE is
stable in this case.

%For $n=7$, there exits $e_{0}>0$, $\xi_{2}(e)>0$, such that for any $(e,\mu)\in[0,e_{0})\times(\xi_{2}(e),+\infty)$,
%the $1+7$-gon system is spectral stability, where $\xi_{1}(e,7)=\max\{\mu_{1}(e), 2Q_{\max}(7)\}$.

\end{thm}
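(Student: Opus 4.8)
The plan is to treat $1/m$ as a perturbation parameter. After the reduction that produces the decomposition (\ref{bdcom.0}) (see Theorem \ref{thm3.4}), the coefficient matrix $\mathcal{B}_l(e,\theta)$ of the $l$-th subsystem is of Kepler type: a fixed kinetic part plus $\tfrac{1}{1+e\cos\theta}$ times a constant symmetric matrix assembled from the eigenvalue $\mu_l(m)$ of the Hessian $D^2U$ of the $(1+n)$-gon central configuration on the $l$-th $\mathbb{Z}_n$-mode, normalized by the Lagrange multiplier $\lambda(m)=U/\mathcal I$. Both $\mu_l(m)$ and $\lambda(m)$ grow linearly in $m$ (the central body dominates $U$ and contributes nothing to $\mathcal I$), so the normalized quotients converge as $m\to\infty$, uniformly in $\theta\in[0,2\pi]$ and in $e$ on compact subsets of $[0,1)$, while the couplings involving the displacement of the central body tend to $0$ for $l\ge2$. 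Hence $\mathcal{B}_l(e,\theta)\to\mathcal{B}_l^\infty(e,\theta)$, an explicit limiting matrix, and $\ga_l(2\pi)=\ga_l(2\pi;e,m)$ converges to the monodromy $\ga_l^\infty(2\pi;e)$ of the limit system. The proof then amounts to analyzing $\ga_l^\infty(2\pi;e)$ and passing back to large finite $m$ by an openness argument.

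First I would identify the limit systems. For $l\ge2$ the vanishing of the central coupling makes $\mathcal{B}_l^\infty$ split into elliptic Lagrangian-type $2$-degree-of-freedom blocks of the kind handled in \cite{HLS}, \cite{ZL}; for $l=1$ the central-body displacement stays coupled and $\mathcal{B}_1^\infty$ is the reduced restricted-problem block governing Moeckel's threshold in the circular case (\cite{Moe1}, \cite{Rob1}). For $e=0$ each $\mathcal{B}_l^\infty$ is autonomous, so the eigenvalues and Krein signatures of $\ga_l^\infty(2\pi;0)=\exp(2\pi J\mathcal{B}_l^\infty)$ are read off from $J\mathcal{B}_l^\infty$; one checks that for $l\ge2$ and every $n\ge4$ they lie on $\mathbb{U}\setminus\{\pm1\}$ with the rotation angles in $(\pi,2\pi)$. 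For $e>0$ I would propagate this by the standard Maslov-type/$\omega$-index continuation along the homotopy in $e$: using a Bott-type iteration formula and monotonicity of the index, one shows that no eigenvalue of $\ga_l(2\pi)$ crosses $\pm1$ or leaves $\mathbb{U}$ as $e$ increases, so for $l\ge2$ the normal form $\mathcal{R}(\alpha_l)\diamond\mathcal{R}(\beta_l)\diamond\mathcal{R}(\theta_l)\diamond\mathcal{R}(\phi_l)$ (respectively $\mathcal{R}(\alpha_l)\diamond\mathcal{R}(\beta_l)$ when $n$ is even and $l=[n/2]$) with angles in $(\pi,2\pi)$ persists for all $e\in[0,1)$.

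To descend from $m=\infty$ to large finite $m$, observe that strong linear stability in the sense of Krein — all eigenvalues on $\mathbb{U}\setminus\{\pm1\}$, semisimple, with definite signature on each (in particular, simple eigenvalues suffice) — together with the specific form $\mathcal{R}(\cdot)\diamond\cdots\diamond\mathcal{R}(\cdot)$ with angles in the \emph{open} arc $(\pi,2\pi)$, is an open condition in $\Sp(2k)$. Given the uniform convergence $\ga_l(2\pi;e,m)\to\ga_l^\infty(2\pi;e)$ and the fact — which must be verified, including the limit $e\to1$ — that the limiting angles stay bounded away from $\pi$ and $2\pi$ uniformly in $e\in[0,1)$, there is an $m_0$ \emph{independent of $e$} such that for $m\ge m_0$ every $\ga_l(2\pi;e,m)$ with $l\ge2$ has the asserted normal form. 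This establishes the first part of the theorem.

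The main obstacle is the block $l=1$ for $n\ge8$. Here the characteristic exponents of $\ga_1^\infty(2\pi;e)$ sit closest to the boundary of the stability region and, unlike the $l\ge2$ blocks, the stability window is sensitive to $e$: this is why one needs $n\ge8$ rather than Moeckel's circular threshold $n\ge7$, the value $n=7$ failing for $e$ close to $1$. I would write out the characteristic polynomial (equivalently, the trace and Krein-signature data) of $\ga_1^\infty(2\pi;e)$, reduce linear stability to a pair of discriminant inequalities in $(n,e)$, and verify these for all $e\in[0,1)$ precisely when $n\ge8$; the hard part is making the inequalities uniform as $e\to1$, where the coefficients of $\mathcal{B}_1$ blow up, so that controlling the $e\to1$ limit of $\ga_1^\infty(2\pi;e)$ (or of its Maslov-type index) is where most of the work goes. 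Combined with the openness argument applied to $l=1$, this yields $\ga_1(2\pi)\approx\mathcal{R}(\alpha_1)\diamond\mathcal{R}(\beta_1)\diamond\mathcal{R}(\theta_1)\diamond\mathcal{R}(\phi_1)$ with angles in $(\pi,2\pi)$ for $m$ large, and since $\ga=\ga_1\diamond\cdots\diamond\ga_{[n/2]}$ is stable if and only if every factor is, the $(1+n)$-gon ERE is linearly stable for $n\ge8$ and $m$ large enough.
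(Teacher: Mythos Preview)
Your perturbation-and-openness strategy has a fatal gap: the limiting system as $m\to\infty$ is \emph{not} strongly stable. For every $l$ (including $l\ge2$), the paper computes that $R_l\to I+\mathrm{diag}(2,-1,2,-1)$ (or its $2\times2$ analogue), so $\mathcal{A}(R_l,e)\to\mathcal{A}(\tfrac12,\tfrac32,e)\oplus\mathcal{A}(\tfrac12,\tfrac32,e)$, which is the linearized Kepler problem. That operator has $\nu_1=3$ per block (see \eqref{3.27}); equivalently, $1$ is an eigenvalue of $\ga_l^\infty(2\pi;e)$ with multiplicity $6$ (or $3$ when $l=n/2$). Thus your claim that the limit eigenvalues ``lie on $\mathbb{U}\setminus\{\pm1\}$ with rotation angles in $(\pi,2\pi)$'' is false, and the open condition you want to invoke is not met at the limit. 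Nothing in a naive openness argument tells you whether, for large finite $m$, these degenerate eigenvalues leave $1$ along $\mathbb{U}$ or along the real axis; you would need at least a first-order perturbation computation for each of the six (or three) zero eigenvalues, for every $e\in[0,1)$. A related issue: for the $l=1$ block at $e>0$ the system is non-autonomous, so there is no characteristic polynomial of $\mathcal{B}_1^\infty$ to ``write out'' and reduce to discriminant inequalities in $(n,e)$; and there is no general monotonicity of the $\omega$-index in $e$ that lets you propagate from $e=0$.

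The paper's route is designed precisely to avoid sitting at the Kepler degeneracy. It introduces the two-parameter ``criteria operator'' $\mathcal{A}(\alpha,\beta,e)$ and proves monotonicity in $\alpha,\beta$ (Proposition~\ref{pro4.2}) and the key strict bound $\mathcal{A}(\alpha,\beta,e)>0$ on the $\omega=1$ domain whenever $\alpha\ge\tfrac12$, $0<\beta<\alpha+1$ (Theorem~\ref{thm4.3}). For $l\ge2$ it sandwiches $\mathcal{A}(R_l,e)$ between operators of this form and checks the parameter inequalities directly (Lemmas~\ref{thm4.11}, \ref{thm4.12}), obtaining $\phi_1=\nu_1=0$ for all finite $m$ larger than an explicit threshold independent of $e$. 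The $-1$ index, by contrast, \emph{can} be obtained by limiting to Kepler since $\nu_{-1}(\mathcal{A}(\tfrac12,\tfrac32,e))=0$ (Lemmas~\ref{thm4.14}--\ref{thm4.16}); here the threshold may depend on $e$. For $l=1$ the sandwich works when $n\ge9$ (Lemmas~\ref{lem4.11}, \ref{lem4.12}); for $n=8$ it fails (Remark~\ref{remAA}), and only then does the paper do the perturbation computation you propose --- but at the operator level, showing the derivative of each zero eigenvalue of $\widetilde{\mathcal{A}}_1(\eta,e)$ at $\eta=0$ is strictly positive (Lemma~\ref{lem4.13}). Finally, the normal form with angles in $(\pi,2\pi)$ is read off not from the limit but from the index data $i_1=0$, $\nu_1=0$, $i_{-1}=4$ (or $2$), $\nu_{-1}=0$ via the splitting-number iteration formula of \cite{Lon4}.
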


\begin{rem} For $n=2,\cdots,6$, $\ga_1$ is not linearly stable even in the case $e=0$. $n=2$ is a special case of elliptic
Euler orbits and was studied in \cite{HO16} and \cite{ZL}. For $n=7$,  $\ga_1$ is stable when $e=0$ and $m$ large enough. We guess that this
is also true for any $e\in (0,1)$. It is not clear to us whether the method in this paper can be used to solve it.
\end{rem}

The idea of the proof of the above theorem is based on the analysis of corresponding Sturm-Liouville operators and
the Maslov-type index theory (cf. \cite{Lon4}). For reader's convenience, instead of introducing the Maslov-type index
theory, we give the stability criteria in terms of the Morse indices. Our method can also be used to study the
hyperbolicity when $m$ is small.

\begin{thm}\label{th1.2}
Let $n\geq 3$. For $l=1,\cdots,[n/2]$, if $m\in (\Gamma^-_l, \Gamma^+_l)$, then $\ga_l$ is hyperbolic for all $e\in [0,1)$,
where $\Gamma^\pm_l$ are given in Theorem \ref{thm5.1} below.
\end{thm}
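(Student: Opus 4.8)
The plan is to prove Theorem \ref{th1.2} by reducing the hyperbolicity question for each subsystem $\ga_l$ to a statement about the (lack of) degeneracy of an associated Sturm--Liouville operator, and then exploiting monotonicity in the parameter $m$. Concretely, the linearized essential part on the $l$-th block has the form $\dot\ga = J_4 \mathcal{B}_l(e,\theta)\ga$, and the symmetric matrix $\mathcal{B}_l(e,\theta)$ depends on $m$ through a coefficient that, after the standard change of variables (following \cite{MS}), enters as the ``eigenvalue'' of a Hamiltonian of the type $\mathcal{L}_l = -\frac{d^2}{d\theta^2} - \left(\text{something} \right) + m$-dependent potential, acting on $2\pi$-periodic (or suitably boundary-conditioned) functions. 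I would first write $\mathcal{B}_l$ explicitly in the $m$-parametrized form and identify, via the Maslov-type index framework of \cite{Lon4}, the precise correspondence between $\ga_l(2\pi)$ having an eigenvalue on $\dbU$ and the operator $\mathcal{L}_l$ (with appropriate boundary conditions, Dirichlet and periodic) having a zero in its spectrum.

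The core mechanism is a monotonicity argument. The key step is to show that as $m$ increases, the relevant eigenvalue curves of $\mathcal{L}_l$ move monotonically, so that the set of $m$ for which $\ga_l$ is \emph{not} hyperbolic is a union of intervals whose endpoints are precisely the values where a Dirichlet or periodic eigenvalue crosses zero. Between two consecutive such crossing values — which is exactly the interval $(\Gamma^-_l,\Gamma^+_l)$ to be produced in Theorem \ref{thm5.1} — no eigenvalue of $\ga_l(2\pi)$ can reach the unit circle, hence $\ga_l$ is hyperbolic there. To make this robust for \emph{all} $e\in[0,1)$ simultaneously, I would carry the eccentricity along as an inert parameter and check that the comparison/monotonicity in $m$ holds uniformly; the cleanest route is to bound the spectrum of $\mathcal{L}_l(e,m)$ above and below by that of explicit $e$-independent operators (e.g.\ using $\frac{1}{1+e\cos\theta}\in\left[\frac{1}{1+e},\frac{1}{1-e}\right]$ type estimates on the $\theta$-periodic weight) and verifying that even these extremal operators keep their zero-eigenvalue away from the open interval in question. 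One then defines $\Gamma^-_l$ and $\Gamma^+_l$ as the two values of $m$ at which the pertinent Morse-index jump occurs, reading them off from the characteristic equation of the constant-coefficient ($e=0$) comparison operator, which is solvable in closed form.

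The main obstacle I anticipate is \emph{pinning down the correct boundary conditions and the exact index count}. Hyperbolicity of $\ga_l$ means \emph{none} of the $4$ (or $2$, for the central even block) eigenvalues lie on $\dbU$; translating this into ``the relevant $\mathcal{L}_l$ has no zero for Dirichlet \emph{and} no zero for periodic/antiperiodic conditions'' requires carefully tracking which Maslov-type index (and which of $i_{-1}$, $i_1$, the concavity, the nullity) governs each of the algebraic multiplicities of $\pm 1$ versus the conjugate pairs on $\dbU$. Getting the bookkeeping right — i.e.\ that the complement of the hyperbolic region is genuinely an interval and not two disjoint pieces on each side — is where the real care lies; this is presumably what Theorem \ref{thm5.1} is set up to handle. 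A secondary technical point is ensuring the comparison operators' extremal eigenvalues are computed with the right sign conventions so that the produced interval $(\Gamma^-_l,\Gamma^+_l)$ is nonempty and matches the ``small $m$'' instability regime advertised in the abstract. Once Theorem \ref{thm5.1} is in place, Theorem \ref{th1.2} follows by simply invoking it for each $l=1,\dots,[n/2]$ and noting the stated range of $m$.
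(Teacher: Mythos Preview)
Your plan has a genuine gap: the core mechanism you propose --- monotonicity of the eigenvalues of the Sturm--Liouville operator in the mass parameter $m$ --- is neither what the paper does nor something you could easily establish. The matrix $R_l = I + \frac{1}{\lambda}\mathcal{U}(l)$ depends on $m$ nonlinearly (the denominator is $\lambda = m + \frac{1}{2}\sigma_n$ while the numerator entries are affine in $m$), so the spectrum of the associated operator $\mathcal{A}(R_l,e)$ does \emph{not} move monotonically in $m$, and the non-hyperbolic set is not simply a union of intervals bounded by zero-crossings. Consequently your proposed definition of $\Gamma^\pm_l$ as ``the values of $m$ at which the pertinent Morse-index jump occurs'' is not how the theorem works: in the paper the $\Gamma^\pm_l$ are \emph{explicit algebraic expressions} in $P_l, Q_l, S_l, \sigma_n$ (see Theorem~\ref{thm5.1}), produced as the range of $m$ on which certain \emph{sufficient} comparison inequalities hold, not as bifurcation values. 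Also, Dirichlet boundary conditions play no role here; the relevant family is $\omega$-periodic for all $\omega\in\mathbb{U}$.

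The paper's actual route is conceptually simpler than what you sketch. The key criterion is Theorem~\ref{4.1.1}: $\ga_l(2\pi)$ is hyperbolic as soon as $\mathcal{A}(R_l,e)$ is positive definite on $\bar D(\omega,2\pi)$ for \emph{every} $\omega\in\mathbb{U}$. Theorem~\ref{thm5.1} then shows this positivity directly on the stated $m$-intervals by two comparison devices: (a) when the constant matrix satisfies $R_l > I$ one invokes Lemma~\ref{lem4.3} (which gives $\mathcal{A}(\alpha,0,e)>0$ for $\alpha>0$); (b) otherwise one reduces $\mathcal{A}(R_l,e)$, via the block comparison \eqref{3.16}--\eqref{3.18}, to the two-dimensional ``criteria operator'' $\mathcal{A}(\alpha,\beta,e)$ and applies the known positivity region from the Lagrangian three-body analysis (Corollary~\ref{cor4.10}). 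The intervals $(\Gamma^-_l,\Gamma^+_l)$ are exactly where one of these two sufficient conditions is met. No eigenvalue-tracking or monotonicity-in-$m$ argument is needed; once Theorem~\ref{thm5.1} is in hand, Theorem~\ref{th1.2} is a one-line application of Theorem~\ref{4.1.1}.
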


Consequently, we have much stronger results for $n=3$, $4$ and $5$:

$(1+3)$-gon system is hyperbolic for all $m\in [0, 0.0722)$ and $e\in [0,1)$;

$(1+4)$-gon system is hyperbolic for all $m\in [0, 0.1768)$ and $e\in [0,1)$;

$(1+5)$-gon system is hyperbolic for all $m\in (0.2613, 0.3035)$ and $e\in [0,1)$.

In fact, we guess that $(1+5)$-gon system is hyperbolic for all $m\in [0, 0.3035)$ and $e\in [0,1)$.

This paper is organized as follows. In Section 2, we explain the reduction results of $(1+n)$-gon ERE.
We introduce criteria for related operators and study their properties in Section 3. We prove the
stability Theorem \ref{th1.1} in Section 4, and then we study the unstable cases and prove Theorem \ref{th1.2}
in Section 5.

\section{The Reduction of Elliptic Relative Equilibria of $(1+n)$-gon.}%Section 2.

In 2005, Meyer and Schmidt used the central configuration coordinate to reduce the elliptic relative equilibria
and get the essential part for the linear stability. Their central configuration coordinate is very important for us to reduce
the $(1+n)$-gon ERE.  For the reader's convenience, we briefly review the central configuration
coordinates introduced by Meyer and Schmidt in \cite{MS}.

Considering $n$ particles with masses $m_1,...,m_n$, let $Q=(q_1,...,q_n)\in (\R^2)^n$ be
the position vector, and $P=(p_1,...,p_n)\in (\R^2)^n$ be the momentum vector. Denote by $d_{ij}=||q_{i}-q_{j}||$.
The Hamiltonian function has the form
\bea
H(P,Q)=\sum_{j=1}^n\frac{||p_{j}||^2}{2m_{j}}-U(Q),\ \ U(Q)=\sum_{1\leq j<i\leq n}\frac{m_{j}m_{i}}{d_{ji}}.
\eea
We denote by $\mathbb{J}_n=diag(J_2,...,J_2)_{2n\times2n} $ and  $M=diag (m_1,m_1,m_2,m_2,...,m_n, m_n)_{2n\times2n}$.
Let $x(t)$ be a periodic ERE solution with respect to a central configuration $a$.  Then the corresponding
fundamental solution $\ga$ is given by
\bea \dot{\ga}(t)=J_{4n}H''(x(t))\ga(t),\,\ \ga(0)=I_{4n}. \label{ga}  \eea
As in \cite{MS} (page 266, Cor. 2.1), for the homographic solution $(P(t),Q(t))$ of a central configuration $a$,
by using the central configuration coordinate, the system (\ref{ga}) can be decomposed into $3$ subsystems on
$E_1$, $E_2$ and $E_3=(E_1\cup E_2)^{\perp}$ respectively. A basis of $E_1$ is given by $(u,0)$, $(v,0)$, $(0,Mu)$,
and $(0,Mv)$, where $u =(1, 0, 1, 0, ...)$, $v =(0,1, 0, 1, .)$. The space $E_2$ is spanned by $(a,0)$,
$(\mathbb{J}_na,0)$, $(0,Ma)$, and $(0,\mathbb{J}_nMa)$. $E_{1}$ reflects the translation invariant of the problem;
$E_{2}$ is the space swept out by rotation and dilation of central configurations; and $E_{3}$ is the essential part.

Meyer and Schmidt first introduced the linear transformation of the form $Q = AX,  P = A^{-T} Y$ with
$X = (g, z,w)\in \mathbb{R}^2\times\mathbb{R}^2\times\mathbb{R}^{2n-4}$ and
$Y =(G,Z,W)\in\mathbb{R}^2\times\mathbb{R}^2\times\mathbb{R}^{2n-4}$, where
$A\in GL(\R^{2n})$ and satisfies (cf. \cite{MS}, p.263)
\bea  \mathbb{J}_{n}A = A\mathbb{J}_{n}, \qquad A^TMA = I_{2n}.  \label{AA}\eea
After this transformation, $B(t)=H''(x(t))$ in this new coordinate system  has the form
$B(t)=B_1(t)\oplus B_2(t)\oplus B_3(t)$, where $B_i(t)=B|_{E_i}(t)$.
The essential part $B_3(t)$ is a path of $(4n-8)\times(4n-8)$ symmetric matrices.

By taking the rotating coordinates and using the true anomaly $\theta$ as the variables, Meyer and Schmidt \cite{MS}
gave a useful form of the essential part, that is
\bea B_{3}(\theta)=\left( \begin{array}{cccc} I_{k} & -\mathbb{J}_{k/2} \\
\mathbb{J}_{k/2} & I_{k}-r_e(\theta)(I_{k}+\mathcal {D})
\end{array}\right),\quad \theta\in[0,2\pi], \label{msf} \eea
where $k=2n-4$ and $e$ is the eccentricity, $r_e(\theta)=(1+e\cos(\theta))^{-1}$  and
\bea \mathcal{D}=\frac{1}{\lambda}A^TD^2U(a)A\big|_{w\in\mathbb{R}^{k}},
      \quad {\mathrm with} \quad \lambda=\frac{U(a)}{I(a)}.  \eea

We denote by $R:= I_k + \mathcal{D}$, which can be considered as the regularized Hessian of
the central configurations. In fact, direct computations show that
\bea  R=\frac{1}{U(a)}A^TD^2U(a)|_{\Sigma}A\big|_{w\in\mathbb{R}^{k}},   \eea
and the corresponding Sturm-Liouville system is
\bea -\ddot{y}-2\mathbb{J}_{k/2}\dot{y}+r_e(\theta) R y=0.  \label{st} \eea
Let $\ga_e(\theta)$ be the fundamental solution of $B_3$, that is
\bea \dot{\ga}_e(\theta)=JB_3(\theta)\ga_e(\theta),  \quad  \ga_e(0)=I_{2n}. \label{ga3}
\eea
The ERE is spectrally stable (hyperbolic), if $\ga_e(2\pi)$ is spectrally stable (hyperbolic).
Let $a=(x^T_{0},x^T_{1},...x^T_{n})^T$ be the position vector of the $(1+n)$-gon central
configuration with $x_{0}=(0,0)^T$, $x_{k}=(\cos\theta_{k},\sin\theta_{k})^T$, where
$\theta_{k}=\frac{2\pi k}{n}, k\in\{1,2,...n\}$ and $M=diag(m, m,1,1,...,1,1)$.

In order to get the exact form of $A^TU''(a)A$, the first step is to find a series of invariant subspaces
$W_{l}$ with $l\geq 1$ of $M^{-1}U''(a)$, the second step is to find the $M$-orthogonal bases of $W_{l}$.
Here, two vector $u\neq v$ are called $M$-orthogonal if $u^TMv=0$ and $u^TMu=1$ hold. Then all the $M$
orthogonal bases form the matrix $A$, also we can get a series of exact expressions of $M^{-1}U''(a)$
corresponding to each invariant subspaces $W_{l}$.

The construction of the invariant subspace $W_{l}$ was given in \cite{Max1, Moe1} in the study of the
case of $e=0$. In fact, they can be obtained as follows.

Let $\omega=e^{\frac{2\pi i}{n}}$, $\omega^{kl}=e^{\frac{2\pi i}{n}kl}$ for $1\leq k\leq n$ and $l\geq 0$,
\bea
S=\left( \begin{array}{ccccc} O_{2} & I_{2} & O_{2}& ...& O_{2} \\
O_{2} & O_{2} & I_{2} & ... & O_{2}\\
\vdots & \vdots & \vdots & ... & \vdots\\
O_{2} & O_{2} & O_{2} &... & I_{2}\\
I_{2} & O_{2} & O_{2} &... & O_{2}
\end{array}\right)_{2n\times 2n},\eea
and $R_{n}(\theta_{1}) = diag({\mathcal R}(\theta_{1}),{\mathcal R}(\theta_{1}),...{\mathcal R}(\theta_{1}))_{2n\times 2n},\,
\hat{S}=diag({\mathcal R}(\theta_{1})^{-1},{\mathcal R}_{n}(\theta_{1})^{-1}S)$. Since $\hat{S}(a)=a$, we have the lemma
below which is got by direct computations.

\begin{lem}
We have $U(\hat{S}y)=U(y)$ for every $y\in(\R^2)^{1+n}$. Here especially for every $(1+n)$-gon central
configuration $a$, the identity $\hat{S}U''(a)=U''(a)\hat{S}$ holds. Consequently from the fact
$M^{-1}\hat{S}=\hat{S}M^{-1}$, the identity $\hat{S}M^{-1}U''(a)=M^{-1}U''(a)\hat{S}$ holds. Hence
each eigen-subspace of $\hat{S}$ must be an invariant subspace of $M^{-1}U''(a)$. \label{lem3.1}
\end{lem}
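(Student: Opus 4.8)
\emph{Plan of proof.} The lemma is a symmetry statement, and the plan is to derive all four assertions from two elementary invariances of the Newtonian potential: $U$ is unchanged under any permutation of bodies carrying equal masses, and under any rotation of $\R^2$ applied simultaneously to all bodies, since $U$ depends only on the mutual distances $d_{ij}=\|q_i-q_j\|$. First I would record that $\hat S$ factors as $\hat S=\rho P$, where $P=\mathrm{diag}(I_2,S)$ cyclically relabels the $n$ unit-mass vertices while fixing the central body, and $\rho=\mathrm{diag}\big(\mathcal R(\theta_1)^{-1},\mathcal R_n(\theta_1)^{-1}\big)$ turns all $1+n$ bodies by $-\theta_1$. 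Then $U(Py)=U(y)$ because $P$ merely permutes equal-mass particles, and $U(\rho y)=U(y)$ because $\rho$ is a rigid rotation; composing, $U(\hat S y)=U(y)$ for every $y\in(\R^2)^{1+n}$, which is the first claim.

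Next I would differentiate this identity twice. Both sides are smooth off the collision set, which $\hat S$ maps onto itself, so the chain rule gives $\nabla^2\big(U\circ\hat S\big)(y)=\hat S^{T}\,(\nabla^2U)(\hat S y)\,\hat S$; since $U\circ\hat S=U$, the left side is just $U''(y)$, and evaluating at $y=a$ with $\hat S(a)=a$ yields $\hat S^{T}U''(a)\hat S=U''(a)$. The fixed-point identity $\hat S(a)=a$, asserted just before the lemma, is the one point to spell out: cyclically shifting the vertices $x_k=(\cos\theta_k,\sin\theta_k)$ is the same as rotating the whole $n$-gon by $\theta_1$ (because $\theta_{k+1}=\theta_k+\theta_1$), which the factor $\mathcal R_n(\theta_1)^{-1}$ undoes, while $\mathcal R(\theta_1)^{-1}$ fixes the central body $x_0=0$. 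Since $\hat S$ is a product of rotation blocks and a permutation matrix it is orthogonal, so $\hat S^{T}=\hat S^{-1}$, and $\hat S^{T}U''(a)\hat S=U''(a)$ rearranges to $\hat S\,U''(a)=U''(a)\,\hat S$, the second claim.

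For the third claim, $M=\mathrm{diag}(m,m,1,\dots,1)$ is a scalar on each of the two $\hat S$-invariant blocks — equal to $mI_2$ on the central $\R^2$ and to $I_{2n}$ on the vertex $(\R^2)^n$ — so $M$ commutes with both $\rho$ and $P$, hence with $\hat S$; this gives $M^{-1}\hat S=\hat S M^{-1}$, and conjugating $\hat S U''(a)=U''(a)\hat S$ by $M^{-1}$ produces $\hat S\,M^{-1}U''(a)=M^{-1}U''(a)\,\hat S$. Finally, $\hat S$ has finite order — indeed $\hat S^{n}=\rho^{n}P^{n}=I$, using that $\rho$ and $P$ commute, that $S^{n}=I$, and that $\rho^{n}$ is rotation by $-2\pi$ — so $\hat S$ is diagonalizable over $\mathbb{C}$ with eigenvalues among the $n$-th roots of unity, and any operator commuting with $\hat S$ preserves each of its eigenspaces. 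Hence every eigen-subspace of $\hat S$ is invariant under $M^{-1}U''(a)$; these eigenspaces provide the invariant subspaces $W_l$ used in what follows.

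There is no deep obstacle in this lemma; the points requiring care are the verification of $\hat S(a)=a$ — matching the block-shift $S$ with the rotation $\mathcal R_n(\theta_1)$ on the $(1+n)$-gon — and keeping the transpose straight in the second-derivative step, which comes out cleanly precisely because $\hat S$ is orthogonal ($\hat S^{T}=\hat S^{-1}$). One may also note that $\hat S$ preserves the centre-of-mass constraint $\sum_i m_i x_i=0$, so it descends to the configuration space $\Lambda$, though for the Hessian identities above one needs $\hat S$ only as a linear map on $(\R^2)^{1+n}$.
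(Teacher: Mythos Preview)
Your proof is correct and follows the same line the paper indicates --- the paper merely asserts that the lemma ``is got by direct computations'' from $\hat S(a)=a$, and you have supplied precisely those computations: the invariance of $U$ under simultaneous rotations and under permutations of equal-mass bodies, the chain-rule Hessian identity $\hat S^{T}U''(\hat S y)\hat S=U''(y)$ specialized at the fixed point $a$, orthogonality of $\hat S$, and block-scalar form of $M$. One small wording quibble: in step three you do not literally ``conjugate'' but rather left-multiply by $M^{-1}$ and then use $M^{-1}\hat S=\hat S M^{-1}$; the argument itself is fine.
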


Based on Lemma \ref{lem3.1}, it suffices to find all the eigen-subspaces of $\hat{S}$. Then we choose
the $M$-orthogonal bases of each one of these subspaces and compute the reduction form of
$M^{-1}U''(a)$. The results below are taken from Moeckel \cite{Moe1} and Roberts \cite{Rob1}.

\begin{lem}
The following subspaces are the invariant subspaces of $M^{-1}U''(a),$
\bea
W(0)&=&\mathrm{Span}\{a, \mathbb{J}_{n+1}a\},\nonumber \\
W(1)&=&\mathrm{Span}\{\hat{c}, \mathbb{J}_{n+1}\hat{c}, \hat{v}(1),\mathbb{J}_{n+1}\hat{v}(1),
         \hat{w}(1), \mathbb{J}_{n+1}\hat{w}(1)\},\nonumber \\
W(l)&=&\mathrm{Span}\{v(l),\mathbb{J}_{n+1}v(l),w(l),\mathbb{J}_{n+1}w(l)\},\quad 2\leq l\leq[\frac{n-1}{2}],\nonumber \\
W(\frac{n}{2})&=&\mathrm{Span}\{v(\frac{n}{2}),\mathbb{J}_{n+1}v(\frac{n}{2})\}, \quad\mathrm{if}\ \ n\in 2\mathbb{N}.\nonumber
\eea
where
\bea
\hat{w}(1)&=&(0,0,\cos2\theta_{1},\sin2\theta_{1},\cdots,\cos2\theta_{n},\sin2\theta_{n})^T,\nonumber \\
\hat{c}&=&(1,0,1,0,\cdots,1,0)^T,\ \ \hat{v}(1)=(-n/m,0,1,0,\cdots,1,0)^T,\nonumber \\
v(l)&=&(0, 0, v_{1l}, \cdots , v_{nl})^T,w(l)=(0,0,w_{1l},\cdots,w_{nl})^T, \nonumber \\
v_{kl}&=&\cos \theta_{kl}\cdot(\cos \theta_{k},\sin \theta_{k}),\ \
w_{kl}=\sin \theta_{kl}\cdot(\cos \theta_{k},\sin \theta_{k}).\nonumber
\eea
\end{lem}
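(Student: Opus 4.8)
The plan is to deduce the lemma entirely from Lemma \ref{lem3.1}. First I would record that $\hat{S}^n=I_{2(n+1)}$ --- the central block $\mathcal{R}(\theta_1)^{-1}$ and the block $R_n(\theta_1)^{-1}S$ both have order dividing $n$ --- so $\hat{S}$ is diagonalizable over $\mathbb{C}$ with eigenvalues among the $n$-th roots of unity, and $\mathbb{R}^{2(n+1)}$ splits into the real eigen-subspaces of $\hat{S}$: the $1$-eigenspace, the $(-1)$-eigenspace when $n$ is even, and the real span of the eigenvectors for each conjugate pair $\{\omega^l,\bar\omega^l\}$. By Lemma \ref{lem3.1} the operator $M^{-1}U''(a)$ commutes with $\hat{S}$ and hence preserves each of these real eigen-subspaces; so it suffices to identify the listed spaces $W(0)$, $W(1)$, $W(l)$ for $2\le l\le[\frac{n-1}{2}]$, and $W(\frac{n}{2})$ (when $n$ is even) with this eigen-subspace decomposition.

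The heart of the argument is the action of $\hat{S}$ on the building blocks $v(l),w(l)$. I would write the $k$-th peripheral block of $v(l)+iw(l)$ as $\omega^{kl}(\cos\theta_k,\sin\theta_k)$; since $\hat{S}$ first applies the cyclic shift $k\mapsto k+1$ and then rotates every block by $-\theta_1$, this block becomes $\omega^{(k+1)l}(\cos\theta_{k+1},\sin\theta_{k+1})$, and after the rotation undoes the increment $\theta_k\mapsto\theta_{k+1}$ it equals $\omega^l\cdot\omega^{kl}(\cos\theta_k,\sin\theta_k)$, while the central block stays zero. This yields
\bea \hat{S}\big(v(l)\pm i\,w(l)\big) &=& \omega^{\pm l}\big(v(l)\pm i\,w(l)\big). \nonumber \eea
Because $\mathbb{J}_{n+1}$ commutes with $\hat{S}$ (each block $J_2$ commutes with rotations and with the block-permutation $S$), $\mathbb{J}_{n+1}\big(v(l)\pm i\,w(l)\big)$ are again $\hat{S}$-eigenvectors with eigenvalues $\omega^{\pm l}$. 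Consequently, for $2\le l\le[\frac{n-1}{2}]$ the complexification of $W(l)$ sits inside $E_{\omega^l}\oplus E_{\omega^{-l}}$; for $n$ even, $w(\frac{n}{2})=0$ and $\omega^{n/2}=-1$, so $W(\frac{n}{2})=\mathrm{Span}\{v(\frac{n}{2}),\mathbb{J}_{n+1}v(\frac{n}{2})\}$ lies in the $(-1)$-eigenspace; and $v(0)=a$, $w(0)=0$, so $W(0)=\mathrm{Span}\{a,\mathbb{J}_{n+1}a\}$ lies in the $1$-eigenspace, consistent with the already noted $\hat{S}(a)=a$. For $l=1$, where the central mass $m$ enters, I would check by direct substitution that each of the three planes $\mathrm{Span}\{\hat{c},\mathbb{J}_{n+1}\hat{c}\}$ (all blocks equal), $\mathrm{Span}\{\hat{v}(1),\mathbb{J}_{n+1}\hat{v}(1)\}$ (peripheral blocks equal, central block multiplied by $-n/m$), and $\mathrm{Span}\{\hat{w}(1),\mathbb{J}_{n+1}\hat{w}(1)\}$ (peripheral Fourier mode $2$) is $\hat{S}$-invariant with $\hat{S}$ acting on it as a rotation $\mathcal{R}(\pm\theta_1)$, so the complexification of $W(1)$ lies in $E_{\omega}\oplus E_{\omega^{-1}}$.

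To upgrade these inclusions to equalities --- which is what actually delivers $M^{-1}U''(a)$-invariance --- I would finally count multiplicities: on the $n$ peripheral blocks the commuting pair $S$ and $R_n(\theta_1)^{-1}$ gives $R_n(\theta_1)^{-1}S$ every $n$-th root of unity with multiplicity exactly $2$, while the central block $\mathcal{R}(\theta_1)^{-1}$ contributes one more copy of $\omega$ and one of $\omega^{-1}$; hence the $1$-eigenspace has dimension $2$, the $\omega^{\pm1}$-eigenspaces have dimension $3$, the $\omega^{\pm l}$-eigenspaces ($l\neq 0,\pm1$) have dimension $2$, and the $(-1)$-eigenspace has dimension $2$ when $n$ is even. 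Comparing with the listed spanning sets --- which are linearly independent because distinct Fourier modes in $k$ and the two directions $\xi=(1,\pm i)$ are independent --- gives the matching dimensions $2$, $6$, $4$ and $2$, so each complexified $W(\cdot)$ is exactly the corresponding sum of eigenspaces, their dimensions totalling $2(n+1)$. I expect the only real difficulty to be the bookkeeping for $l=0,1$, where the central body mixes with the low peripheral frequencies and three (rather than two) $\hat{S}$-invariant planes appear; with the eigenvector identity above in hand, everything else is substitution into the definitions of $\hat{S}$, $v(l)$ and $w(l)$.
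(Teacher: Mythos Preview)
Your proposal is correct and follows exactly the approach the paper indicates: the paper states that, by Lemma \ref{lem3.1}, it suffices to identify the eigen-subspaces of $\hat{S}$, and then cites Moeckel \cite{Moe1} and Roberts \cite{Rob1} for the outcome without giving details. Your computation of $\hat{S}(v(l)\pm i\,w(l))=\omega^{\pm l}(v(l)\pm i\,w(l))$, the separate verification for the three planes in $W(1)$, and the multiplicity count showing each $W(\cdot)$ coincides with the corresponding real eigenspace sum, are precisely the details the paper omits.
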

Direct computations show that
\bea
a^TMa&=&n,\ \ \hat{c}^TM\hat{c}=m+n,\ \ \hat{v}(1)^TM\hat{v}(1)=\frac{n^2}{m}+n, \nonumber \\
\hat{w}(1)^TM\hat{w}(1)&=&n,\ \ v(l)^TMv(l)=\frac{n}{2},\ \ w(l)^TMw(l)=\frac{n}{2}. \nonumber
\eea
Then we normalize the bases as follows,
\bea
\frac{a}{\sqrt{n}},\ \ \frac{\hat{c}}{\sqrt{m+n}},\ \ \sqrt{\frac{m}{n^2+m n}}\hat{v}(1),\ \
\frac{1}{\sqrt{n}}\hat{w}(1),\ \ \sqrt{\frac{2}{n}}v(l),\ \ \sqrt{\frac{2}{n}}w(l).\nonumber
\eea
After the normalization, all the bases are $M$-orthogonal.

Now we construct the matrix $A$ by using the bases of $W(l)$. Let
\bea
A=\left( \begin{array}{cccc}
A_{11} & A_{12} & \cdots & A_{1(n+1)} \\
A_{21} & A_{22} & \cdots & A_{2(n+1)}\\
\cdots & \cdots & \cdots & \cdots\\
A_{(n+1)1} & A_{(n+1)2} & \cdots & A_{(n+1)(n+1)}\end{array}\right),\nonumber
\eea
where each $A_{ij}$ is defined by
\bea
A_{cen}=\left( \begin{array}{c}A_{11} \\ A_{21} \\\vdots \\A_{(n+1)1}\end{array}\right)
%=\frac{1}{\sqrt{m+n}}\left( \begin{array}{c}I_{2} \\ I_{2} \\\vdots \\I_{2}\end{array}\right)%
=\frac{1}{\sqrt{m+n}}(\hat{c}, \mathbb{J}_{n+1}\hat{c}), \ \
A_{kep}=\left( \begin{array}{c}A_{12} \\ A_{22} \\\vdots \\A_{(n+1)2}\end{array}\right)
=\frac{1}{\sqrt{n}}(a,\mathbb{J}_{n+1}a),\nonumber
\eea
\bea
A(1)=\left( \begin{array}{cc}A_{13} & A_{14} \\ A_{23} & A_{24} \\ \vdots & \vdots \\A_{(n+1)3} & A_{(n+1)4}\end{array}\right)
&=&(\sqrt{\frac{m}{n^2+m n}}\hat{v}(1),\sqrt{\frac{m}{n^2+m n}}\mathbb{J}_{n+1}\hat{v}(1),
\frac{1}{\sqrt{n}}\hat{w}(1),\frac{1}{\sqrt{n}}\mathbb{J}_{n+1}\hat{w}(1)),\nonumber \\
A(l)=\left( \begin{array}{cc}A_{1(2l+1)} & A_{1(2l+2)} \\ A_{2(2l+1)} & A_{2(2l+2)} \\ \vdots & \vdots \\A_{(n+1)(2l+1)} &
A_{(n+1)(2l+2)}\end{array}\right)
&=&\sqrt{\frac{2}{n}}(v(l),\mathbb{J}_{n+1}v(l),w(l),\mathbb{J}_{n+1}w(l)),\ \
2\leq l\leq [\frac{n-1}{2}]\ \nonumber \\
A(\frac{n}{2})=\left( \begin{array}{c}A_{1(2l+1)} \\ A_{2(2l+1)} \\ \vdots  \\A_{(n+1)(2l+1)} \end{array}\right)
&=&\sqrt{\frac{2}{n}}(v(l),\mathbb{J}_{n+1}v(l)),\ \ l=[\frac{n}{2}],\ \ \mathrm{if}\ \ n\in 2\mathbb{N}. \label{3.24}\nonumber
\eea
Then the matrix $A$ satisfies $A^TMA=I_{2(n+1)}$ and $A\mathbb{J}_{n+1}=\mathbb{J}_{n+1}A$ as required in (\ref{AA}).

In order to get the essential part of the Hessian, it suffices to compute $\mathcal{U}=A^{T}U''(a)A|_{\mathcal{E}}$.
By (\ref{AA}) we have $M^{-1}U''(a)A=A\mathcal{U}$. By using the properties of the matrix $A$, we can define
$$  \mathcal{U}=diag(\mathcal{U}_{cen},\mathcal{U}_{kep},\mathcal{U}(1),...,\mathcal{U}(l),...,\mathcal{U}(\frac{n}{2})), $$
where they satisfy
\bea
M^{-1}U''(a)A_{cen}=A_{cen}\mathcal{U}_{cen},\ \
M^{-1}U''(a)A_{kep}=A_{kep}\mathcal{U}_{kep},\ \
M^{-1}U''(a)A(l)=A(l)\mathcal{U}(l). \nonumber
\eea
Then $\mathcal{U}$ can be decomposed into a series of parts $\mathcal{U}(l)$, where $\mathcal{U}_{cen}$
corresponds to the motion of the center of mass, $\mathcal{U}_{kep}$ corresponds to the Kepler problem and
the rest parts $\mathcal{U}(l)$ with $1\leq l\leq [\frac{n}{2}]$ correspond to the essential parts which
describe the linear stability of the homographic solution of the $(1+n)$-gon problem. We will get the
precise form of $\mathcal{U}(l), 1\leq l\leq [\frac{n}{2}]$ below.

Let
\bea
a_{0}=\sigma_{n}+2m,\ \
b_{0}=-\frac{1}{2}\sigma_{n}-m,\quad with \quad  \sigma_{n}=\frac{1}{2}\sum_{i=1}^{n-1}\csc \frac{\pi i}{n},\nonumber
\eea
and
\bea
a_{l}=P_{l}-3Q_{l}+2m,\ \ b_{l}=P_{l}+3Q_{l}-m,\nonumber
\eea
\bea
%z_{1}=\sum_{j=1}^{N-1}\frac{1}{2d_{Nj}^3}(1-\cos 2\theta_{j}),
P_{l}=\sum_{j=1}^{n-1}\frac{1-\cos \theta_{jl}\cos \theta_{j}}{2d_{nj}^3},\ \
S_{l}=\sum_{j=1}^{n-1}\frac{\sin \theta_{jl}\sin \theta_{j}}{2d_{nj}^3},\ \
Q_{l}=\sum_{j=1}^{n-1}\frac{\cos \theta_{j}-\cos \theta_{jl}}{2d_{nj}^3}.
\eea
Now we write all parts $\mathcal{U}(l)$ of $M^{-1}U''(a)$ in the
new coordinate, and list them below.
\bea
\mathcal{U}(0)=\left(\begin{array}{cc}a_{0} & 0\\
0 & b_{0}
\end{array}\right),
\eea
\bea\label{2.17}
\mathcal{U}(1)=
\left(\begin{array}{cccc}\frac{n+m}{2}& 0 &\frac{3}{2}\sqrt{m(m+n)}& 0\\
0 & \frac{n+m}{2}& 0 & -\frac{3}{2}\sqrt{m(m+n)}\\ \frac{3}{2}\sqrt{m(m+n)} & 0 & \frac{m}{2}+2P_{1} & 0\\
0 & -\frac{3}{2}\sqrt{m(m+n)} & 0 & \frac{m}{2}+2P_{1}\end{array}\right),
\eea
\bea
\mathcal{U}(l)=\left(\begin{array}{cccc}a_{l} & 0 & 0 & S_{l}\\
0 & b_{l} & -S_{l} & 0\\ 0 & -S_{l} & a_{l} & 0\\
S_{l} & 0 & 0 & b_{l}\end{array}\right),\ \ \begin{array}{c} 2\leq l\leq [\frac{n-1}{2}]
%\\ l\neq [\frac{N}{2}],\ \ \mathrm{if}\ \ N\in 2\mathbb{N}.
\end{array},\nonumber
\eea
\bea
\mathcal{U}(\frac{n}{2})=\left(\begin{array}{cc}
P_{\frac{n}{2}}-3Q_{\frac{n}{2}}+2m & 0 \\ 0 & P_{\frac{n}{2}}+3Q_{\frac{n}{2}}-m\end{array}\right),\ \
       \mathrm{if}\ \ n\in 2\mathbb{N}.
\eea
Direct computations show that $\lambda=\frac{1}{2}\sigma_{n}+m$. Let
\bea
\mathcal{B}_{Kep}(\theta)=\left( \begin{array}{cccc} I & -\mathbb{J}\\
\mathbb{J} & I-r_e(\theta)R_{Kep}  \end{array}\right), \quad where \quad R_{Kep}=I+\frac{1}{\lambda}\mathcal{U}_{Kep},\nonumber
\eea
\bea\label{2.13}
\mathcal{B}_{l}(\theta)=\left( \begin{array}{cccc} I & -\mathbb{J}\\
\mathbb{J} & I-r_e(\theta)R_l  \end{array}\right), \quad where \quad R_l=I+\frac{1}{\lambda}\mathcal{U}(l), \, l=1,\cdots,[\frac{n}{2}].  \eea
Here we omit the sub-indices of $I$ and $\mathbb{J}$, which are chosen to have the same dimensions as those of $R_l$.

Now we get the theorem below,

\begin{thm}\label{thm3.4}
In the new coordinates, by restricting to the configuration space $\Lambda$, the linear Hamiltonian system
for the elliptic  $(1+n)$-gon homographic solution
$\bar{\xi}_{0}=(Y_{0}(\theta),X_{0}(\theta))^T=(0,\sigma,0,...,0,\sigma,0,...,0)^T\in\R^{4n}$
is given by
\bea  \dot{\zeta}(\theta)=J_{4n}B(\theta)\zeta(\theta),\label{2.14} \eea
with $B(\theta)=B_{2}(\theta)\diamond B_{3}(\theta)$, where $B_{2}(\theta)=\mathcal{B}_{Kep}(\theta)$ corresponds to the
linearized system of the Kepler 2-body problem at the Kepler orbits, and $B_{3}(t)$ corresponds to the core
part of the linearized system. Moreover we have
\bea
B_{3}(\theta)=\mathcal{B}_1(\theta)\diamond\cdots\diamond \mathcal{B}_{[\frac{n}{2}]}(\theta). \lb{bdcom} \eea
\end{thm}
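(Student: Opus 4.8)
The plan is to deduce Theorem \ref{thm3.4} by pushing the rotational symmetry of the $(1+n)$-gon through Meyer--Schmidt's general reduction. Three facts are already in hand: (a) the normal form (\ref{msf}) of the essential part $B_3$, in which the whole dependence on the eccentricity $e$ and the true anomaly $\theta$ is carried by the scalar $r_e(\theta)=(1+e\cos\theta)^{-1}$ multiplying the \emph{fixed} matrix $R=I+\mathcal D$, $\mathcal D=\frac{1}{\lambda}A^TD^2U(a)A$; (b) Lemma \ref{lem3.1}, which gives $\hat S M^{-1}U''(a)=M^{-1}U''(a)\hat S$, so that the $\hat S$-eigenspaces $W(l)$ are $M^{-1}U''(a)$-invariant; and (c) the explicit $M$-orthonormal bases of the $W(l)$ making up the columns of $A$. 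The guiding observation is that $R$ depends only on the central configuration $a$, not on $e$, so once its block structure is known, the block structure of $B_3(\theta)$ follows for \emph{every} $e\in[0,1)$ with no additional work; this is exactly why the elliptic case is no harder here than the circular case $e=0$.

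First I would specialize the general reduction. The $(1+n)$-gon has $n+1$ bodies with masses $m,1,\dots,1$, so restricting to $\Lambda=\{\sum m_ix_i=0\}$ (with the corresponding momentum constraint) discards the translation block $B_1$ and leaves $B(\theta)=B_2(\theta)\diamond B_3(\theta)$, with $B_2(\theta)$ the $4\times4$ Kepler block and $B_3(\theta)$ the essential part of the form (\ref{msf}) with $k=2(n+1)-4=2n-2$. The symplectic sum $\diamond$, rather than the ordinary direct sum, appears only because of a relabelling: in Meyer--Schmidt's coordinates $(g,z,w,G,Z,W)$ the Hamiltonian matrix is block-diagonal once each conjugate pair $(g,G)$, $(z,Z)$, $(w,W)$ is kept together, and rewriting it in the order ``all configurations, then all momenta'' turns $\oplus$ into $\diamond$.

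Next I would transport the $\hat S$-symmetry through this picture. By Lemma \ref{lem3.1} and the explicit list of the $W(l)$, the columns of $A$, grouped as $A_{cen},A_{kep},A(1),\dots,A([n/2])$, span an $M$-orthonormal decomposition of $\mathbb{R}^{2(n+1)}$ into $M^{-1}U''(a)$-invariant subspaces, with $A_{cen}$ spanning the translation plane $\mathrm{Span}\{\hat c,\mathbb J_{n+1}\hat c\}$ on which $D^2U(a)$ vanishes, so $\mathcal U_{cen}=0$. Combined with $A^TMA=I$ this forces $\mathcal U=A^TU''(a)A=\mathrm{diag}(\mathcal U_{cen},\mathcal U_{kep},\mathcal U(1),\dots,\mathcal U([n/2]))$, hence $R=I+\mathcal D$ on the essential coordinates equals $\mathrm{diag}(R_1,\dots,R_{[n/2]})$ with $R_l=I+\frac{1}{\lambda}\mathcal U(l)$, while the Kepler block carries $R_{Kep}=I+\frac{1}{\lambda}\mathcal U_{kep}$. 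The second identity in (\ref{AA}), $A\mathbb J_{n+1}=\mathbb J_{n+1}A$, is precisely what makes the off-diagonal terms $\mp\mathbb J_{k/2}$ of (\ref{msf}) respect the same block decomposition and act as the standard $\mathbb J$ inside each block, since each $W(l)$ is spanned by conjugate pairs $\{v,\mathbb J_{n+1}v\}$ and is therefore $\mathbb J_{n+1}$-invariant. Conjugating (\ref{msf}) by the symplectic permutation that regroups each essential configuration coordinate with its conjugate momentum then yields $B_3(\theta)=\mathcal B_1(\theta)\diamond\cdots\diamond\mathcal B_{[n/2]}(\theta)$ with $\mathcal B_l$ exactly as in (\ref{2.13}), and likewise $B_2(\theta)=\mathcal B_{Kep}(\theta)$; one checks along the way that the Kepler block $\mathcal U(0)=\mathrm{diag}(2\lambda,-\lambda)$ produces $R_{Kep}=\mathrm{diag}(3,0)$, the standard linearization of the Kepler two-body problem at an elliptic orbit.

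The remaining and most laborious step is the explicit evaluation of the blocks $\mathcal U(l)$: computing $A^TU''(a)A$ on each $W(l)$ in the normalized bases $a/\sqrt n$, $\hat c/\sqrt{m+n}$, $\sqrt{m/(n^2+mn)}\,\hat v(1)$, $\hat w(1)/\sqrt n$, $\sqrt{2/n}\,v(l)$, $\sqrt{2/n}\,w(l)$. One expands $D^2U$ at the regular $n$-gon, exploits the dihedral symmetry of the polygon to collapse the double sums over pairs $(i,j)$ into the one-variable sums $\sigma_n,P_l,Q_l,S_l$, and carries the normalization constants through; for the circular part these reductions are precisely those of Moeckel \cite{Moe1} and Roberts \cite{Rob1}, and since $R$ is $e$-independent they apply verbatim, together with the identity $\lambda=\frac{1}{2}\sigma_n+m$. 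I expect the genuine pitfalls to be: (i) the block $W(1)$, whose basis involves the central mass through $\hat v(1)=(-n/m,0,1,0,\dots,1,0)^T$ (a scalar multiple of the ring translation made $M$-orthogonal to $\hat c$), so that $\mathcal U(1)$ acquires the off-diagonal coupling $\pm\frac{3}{2}\sqrt{m(m+n)}$ displayed in (\ref{2.17}); (ii) the case split between $n$ odd and $n$ even, the latter producing the extra $2\times2$ block $W(n/2)$ with $\mathcal U(n/2)=\mathrm{diag}(P_{n/2}-3Q_{n/2}+2m,\,P_{n/2}+3Q_{n/2}-m)$; and (iii) bookkeeping the symplectic regrouping of the previous step carefully enough that $\diamond$, not $\oplus$, is the operation appearing in (\ref{bdcom}).
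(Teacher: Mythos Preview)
Your proposal is correct and follows essentially the same approach as the paper: the theorem is not proved separately there but is stated as the summary of the preceding computations in Section 2, namely Meyer--Schmidt's normal form (\ref{msf}), the $\hat S$-invariance from Lemma \ref{lem3.1}, the explicit $M$-orthonormal bases of the $W(l)$ assembled into $A$, and the resulting block-diagonalization $\mathcal U=\mathrm{diag}(\mathcal U_{cen},\mathcal U_{kep},\mathcal U(1),\dots,\mathcal U([n/2]))$ together with $\lambda=\frac{1}{2}\sigma_n+m$. Your outline reproduces this route, including the correct dimension count $k=2n-2$ and the identification of the pitfalls (the $W(1)$ block coupling through $\hat v(1)$, the parity split for $n$ even, and the $\diamond$ versus $\oplus$ bookkeeping).
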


%Let $\ga_l(\theta)$ be the fundamental solution of $\mathcal{B}_l$. Then we have
%\bea \ga_e(\theta)=\ga_1(\theta)\diamond\cdots\diamond\ga_{[\frac{n}{2}]}(\theta), \eea
%and the stability of $\ga_e(2\pi)$ only depends on $\ga_l(2\pi)$  for $l=1,\cdots,[\frac{n}{2}]$.

%%%%%%%%%%%%%%%%%%%%%%%%%%%%%

\section{The Property of the Criteria Operator}%Section 3.

In this section, we first introduce the stability criteria via the Morse indices in Subsection 3.1.
This is based on the Maslov-type index theory described in \cite{Lon4} and the fact that the Maslov-type
index is essentially the same as the Morse index for second order Hamiltonian systems. In order to estimate
the Morse indices, we introduce the criteria operators with simple forms, and study their properties in
Subsections 3.1, 3.2, and 3.3.

\subsection{Stability criteria and the Morse indices of the corresponding operators}%Subsection 3.1.

Next we always define $\mathcal{A}(R,e)$ to be the linear operator corresponding to \eqref{st}, i.e.,
\bea
\mathcal{A}(R,e):=-\frac{d^2}{d\theta^2}I-2\mathbb{J}\frac{d}{d\theta}+r_e(\theta) R.
\eea
where $r_{e}(\theta)$ is defined in (\ref{msf}). Let
$$ \bar{D}_{n}(\omega,T)=\{y\in W^{2,2}([0,T],\mathbb{C}^{n})|y(T)=\omega y(0),\dot{y}(T)=\omega \dot{y}(0)\}. $$
Then $\mathcal{A}(R,e)$ is a self-adjoint operator in $L^2([0,2\pi], \mathbb{C}^{2n-2})$ with domain
$\bar{D}_{2n-2}(\omega,2\pi)$. We simply write it as $\mathcal{A}(R,e,\omega)$ and omit $\omega$ when
there is no confusion. It is obvious that if $R\leq D$, then $\mathcal{A}(R,e)\leq \mathcal{A}(D,e)$.
Here and below we write $A\le B$ for two linear symmetric operators $A$ and $B$, if $B-A\ge 0$, i.e., $B-A$
possesses no negative eigenvalues.

We define the $\omega$-Morse index $\phi_{\omega}(\mathcal{A}(R,e))$ to be the total number of negative
eigenvalues of $\mathcal{A}(R,e)$, and define
$\nu_{\omega}(\mathcal{A}(R,e))=\dim\ker(\mathcal{A}(R,e))$.

\begin{lem}\label{4.1} (See Long \cite{Lon4} p.172).
The $\omega$-Morse index $\phi_{\omega}(\mathcal{A}(R,e))$ and nullity $\nu_{\omega}(\mathcal{A}(R,e))$ are equal
to the $\omega$-Maslov-type index $i_{\omega}(\ga_e)$ and nullity $\nu_{\omega}(\ga_e)$ respectively, that is,
for any $\omega\in \mathbb{U}$, we have
\bea \phi_{\omega}(\mathcal{A}(R,e))=i_{\omega}(\ga_e),\ \ \nu_{\omega}(\mathcal{A}(R,e))=\nu_{\omega}(\ga_e). \label{eq}\eea
where $\ga_e$ is given by (\ref{ga3}).
\end{lem}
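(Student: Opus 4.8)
The plan is to reduce the assertion to the general correspondence, developed in \cite{Lon4}, between the Morse index of a second order Sturm--Liouville operator and the $\omega$-Maslov-type index of the fundamental solution of its first order companion Hamiltonian system. The first step is to record that $\mathcal{A}(R,e,\omega)$ is precisely this companion datum for the path $\gamma_e$ of (\ref{ga3}). Writing $\xi=(a,b)^T$ with $a,b\in\mathbb{C}^{2n-2}$, the system $\dot\xi=JB_3\xi$ with $B_3$ of the form (\ref{msf}) built from $R$ reads $\dot a=-\mathbb{J}a-(I-r_e(\theta)R)b$ and $\dot b=a-\mathbb{J}b$; substituting $a=\dot b+\mathbb{J}b$ into the first equation and using $\mathbb{J}^2=-I$ yields $-\ddot b-2\mathbb{J}\dot b+r_e(\theta)Rb=0$, which is exactly (\ref{st}). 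Since the upper-left block of $B_3$ is the positive definite identity, the Legendre transform is nondegenerate, so $\mathcal{A}(R,e,\omega)$ is the Hessian, on the space of $W^{2,2}$ curves in $\bar D_{2n-2}(\omega,2\pi)$, of the action functional whose Euler--Lagrange equation is (\ref{st}); its self-adjointness has already been noted above.

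The nullity statement $\nu_\omega(\mathcal{A}(R,e))=\nu_\omega(\gamma_e)$ is the elementary half and I would dispose of it first. If $y\in\ker\mathcal{A}(R,e,\omega)$, then $y$ solves (\ref{st}) with $y(2\pi)=\omega y(0)$ and $\dot y(2\pi)=\omega\dot y(0)$; the curve $\xi:=(\dot y+\mathbb{J}y,\,y)^T$ then solves $\dot\xi=JB_3\xi$, and since $y\mapsto\xi$ is $\theta$-independent and linear the two boundary relations on $y$ are equivalent to $\xi(2\pi)=\omega\,\xi(0)$, i.e. $\xi(0)\in\ker(\gamma_e(2\pi)-\omega I)$. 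The assignment $y\mapsto\xi(0)$ is a linear isomorphism from $\ker\mathcal{A}(R,e,\omega)$ onto $\ker(\gamma_e(2\pi)-\omega I)$: injectivity follows from uniqueness of solutions of the ODE, and surjectivity by running the construction backwards. This gives the equality of nullities.

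For the index identity $\phi_\omega(\mathcal{A}(R,e))=i_\omega(\gamma_e)$ I would invoke the spectral flow formula of \cite{Lon4}. Deforming the zeroth order term, say along $V_s(\theta)=r_e(\theta)R+sI$ for $s\in[0,s_0]$ with $s_0$ large (or scaling $r_e$ toward a constant), one obtains a continuous family of self-adjoint operators and, simultaneously, a continuous family of fundamental solution paths; by the general theory the algebraic count of eigenvalues of the operator family crossing $0$ equals the variation of the $\omega$-Maslov-type index of the path family, and the two sides agree at the explicitly computable endpoint where $V_{s_0}$ is very positive (this is also consistent with the monotonicity $R\le D\Rightarrow\mathcal{A}(R,e)\le\mathcal{A}(D,e)$ noted above). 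This is exactly the theorem stated on p.172 of \cite{Lon4} for operators of the form $-\frac{d^2}{d\theta^2}I-2\mathbb{J}\frac{d}{d\theta}+V(\theta)$ under $\omega$-boundary conditions, and in view of the identification of the companion system in the first paragraph it applies verbatim with $V(\theta)=r_e(\theta)R$.

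The only genuinely delicate point, should one wish to reprove the index identity rather than quote it, is the case $\omega\neq 1$: one must realize the $\omega$-boundary condition as intersection with a Lagrangian subspace of $\mathbb{C}^{2n-2}\times\mathbb{C}^{2n-2}$ (the graph of multiplication by $\bar\omega$), verify that the crossing form of the $\omega$-Maslov index at a degenerate instant coincides with the second variation of the restricted action there, and keep track of the boundary contributions at the two ends of the deformation. All of this is carried out in \cite{Lon4}, so for the present purposes it suffices to cite it.
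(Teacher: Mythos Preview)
Your proposal is correct, and in fact it does strictly more than the paper does: the paper supplies no proof at all for this lemma, simply citing \cite{Lon4}, p.172, and moving on. Your write-up makes explicit (i) the Legendre reduction showing that the second order system (\ref{st}) is exactly the companion of $\dot\xi=JB_3\xi$, (ii) the elementary isomorphism $\ker\mathcal{A}(R,e,\omega)\cong\ker(\gamma_e(2\pi)-\omega I)$ giving the nullity identity, and (iii) the spectral-flow/deformation argument behind the index identity, before ultimately deferring to the cited reference for the full details. This is the standard route and is consistent with how \cite{Lon4} proceeds, so there is no discrepancy of approach---you have simply unpacked the citation. One minor remark: in your deformation $V_s=r_eR+sI$ you should take $s\to+\infty$ (or otherwise ensure positivity of the endpoint operator) so that the Morse index there is zero and the Maslov-type index is computable; as written ``$s_0$ large'' is fine, but be explicit that this pushes the spectrum of $\mathcal{A}$ above zero.
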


The next theorem follows from the corresponding property of the Maslov-type index.

\begin{thm}\label{4.1.1}(See (9.3.3) on p.204 of Long \cite{Lon4} with $\omega_0=-1$).
The matrix $\ga_e(2\pi)$ is spectral stable, if $|\phi_1(\mathcal{A}(R,e))- \phi_{-1}(\mathcal{A}(R,e))|=n$.
The matrix $\ga_e(2\pi)$ is hyperbolic, if $\mathcal{A}(R,e)$ is positive definite in $\bar{D}_{2n-2}(\omega,2\pi)$
for any $\omega\in\U$.
\end{thm}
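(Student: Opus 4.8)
\noindent Both assertions follow from Lemma~\ref{4.1} --- which turns the analytic data $\phi_\omega(\mathcal{A}(R,e))$, $\nu_\omega(\mathcal{A}(R,e))$ into the topological data $i_\omega(\gamma_e)$, $\nu_\omega(\gamma_e)$ --- combined with the standard $\omega$-index calculus of \cite{Lon4}. Throughout write $M:=\gamma_e(2\pi)\in\Sp(2n)$, and recall that for $\omega\in\mathbb{U}$ the nullity $\nu_\omega(\gamma_e)$ equals the complex dimension of $\ker_{\mathbb{C}}(M-\omega I)$, so that $M$ is hyperbolic precisely when $\nu_\omega(\gamma_e)=0$ for every $\omega\in\mathbb{U}$. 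We shall also use the splitting numbers $S^{\pm}_M(\omega)\ge 0$ of $M$ at $\omega\in\mathbb{U}$ (the one-sided jumps of $\zeta\mapsto i_\zeta(\gamma_e)$ at $\zeta=\omega$), which satisfy $S^{\pm}_M(\omega)=0$ for $\omega\notin\sigma(M)$, the conjugation symmetry $S^{+}_M(\omega)=S^{-}_M(\bar\omega)$ with $S^{+}_M(\pm 1)=S^{-}_M(\pm 1)$, and the global estimate $\sum_{\omega\in\mathbb{U}}S^{+}_M(\omega)\le n$ in which equality holds if and only if $M$ is spectrally stable; this last fact is exactly the content of the criterion on p.204 of \cite{Lon4} (with $\omega_0=-1$) that the theorem quotes.

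\emph{Hyperbolicity.} If $\mathcal{A}(R,e)$ is positive definite on $\bar{D}_{n}(\omega,2\pi)$ for every $\omega\in\mathbb{U}$, then $\phi_\omega(\mathcal{A}(R,e))=0$ and, in particular, $\nu_\omega(\mathcal{A}(R,e))=0$ for all such $\omega$. Lemma~\ref{4.1} then gives $\nu_\omega(\gamma_e)=0$, i.e. $M-\omega I$ invertible, for every $\omega\in\mathbb{U}$. Hence $M$ has no eigenvalue on $\mathbb{U}$ and is hyperbolic.

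\emph{Spectral stability.} Suppose $|\phi_1(\mathcal{A}(R,e))-\phi_{-1}(\mathcal{A}(R,e))|=n$, which by Lemma~\ref{4.1} reads $|i_1(\gamma_e)-i_{-1}(\gamma_e)|=n$. Following $\zeta\mapsto i_\zeta(\gamma_e)$ as $\zeta$ moves along the upper unit arc from $1$ to $-1$, and recording the jump $S^{+}_M(1)$ at $\zeta=1$, the jumps $S^{+}_M(\zeta)-S^{-}_M(\zeta)$ at the finitely many $\zeta\in\sigma(M)$ with $\Im\zeta>0$, and the jump $-S^{-}_M(-1)$ at $\zeta=-1$, one obtains
\[
i_{-1}(\gamma_e)-i_{1}(\gamma_e)=S^{+}_M(1)-S^{-}_M(-1)+\sum_{\zeta\in\sigma(M),\,\Im\zeta>0}\big(S^{+}_M(\zeta)-S^{-}_M(\zeta)\big).
\]
Dropping the non-positive terms yields $i_{-1}(\gamma_e)-i_1(\gamma_e)\le S^{+}_M(1)+\sum_{\Im\zeta>0}S^{+}_M(\zeta)$, and dropping the non-negative terms --- after rewriting $S^{-}_M(-1)=S^{+}_M(-1)$ and $S^{-}_M(\zeta)=S^{+}_M(\bar\zeta)$ by the conjugation symmetry --- yields $i_{-1}(\gamma_e)-i_1(\gamma_e)\ge -\big(S^{+}_M(-1)+\sum_{\Im\zeta<0}S^{+}_M(\zeta)\big)$. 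Since $\{1\}$, $\{-1\}$, $\{\Im\zeta>0\}$, $\{\Im\zeta<0\}$ partition $\mathbb{U}$ and all splitting numbers are non-negative, both right-hand sides are at most $\sum_{\omega\in\mathbb{U}}S^{+}_M(\omega)$, so that
\[
\big|i_{-1}(\gamma_e)-i_{1}(\gamma_e)\big|\;\le\;\sum_{\omega\in\mathbb{U}}S^{+}_M(\omega)\;\le\;n .
\]
The hypothesis forces both inequalities to become equalities, hence $\sum_{\omega\in\mathbb{U}}S^{+}_M(\omega)=n$, and by the equality clause quoted above $M=\gamma_e(2\pi)$ is spectrally stable.

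\emph{Main obstacle.} The only non-routine ingredient is the sharp global bound $\sum_{\omega\in\mathbb{U}}S^{+}_M(\omega)\le n$ together with its characterization of spectral stability through equality; this is precisely what is imported from \cite{Lon4}, and once the translation in Lemma~\ref{4.1} is in hand the remainder is bookkeeping with the index-jump formula and the symmetry $S^{+}_M(\omega)=S^{-}_M(\bar\omega)$. Two minor points deserve care: the dummy dimension in the quoted criterion of \cite{Lon4} must be matched to the size of $R$ (equivalently, to the half-dimension of the symplectic space on which $\gamma_e(2\pi)$ acts), and one must work with the convention of \cite{Lon4} under which $\nu_\omega(\gamma_e)$ is the geometric multiplicity of $\omega$ in $M$, which is what makes the hyperbolicity step immediate. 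Alternatively, since the statement is explicitly attributed to (9.3.3), p.204 of \cite{Lon4}, one may simply invoke that result directly after applying Lemma~\ref{4.1}.
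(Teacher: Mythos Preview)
The paper does not supply a proof of this theorem; it simply records the statement and cites (9.3.3) on p.~204 of \cite{Lon4}, remarking that ``the next theorem follows from the corresponding property of the Maslov-type index.'' Your proposal therefore does more than the paper: it unpacks that citation via Lemma~\ref{4.1}, the index-jump formula, and the splitting-number calculus, which is exactly the intended route.

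Your argument is correct in substance, with one overstatement. The biconditional ``$\sum_{\omega\in\mathbb{U}}S^{+}_M(\omega)=n$ if and only if $M$ is spectrally stable'' is false in the ``only if'' direction: the block $N_1(1,-1)=\left(\begin{smallmatrix}1&-1\\0&1\end{smallmatrix}\right)$ is spectrally stable yet has $S^{\pm}_M(1)=0$, hence $\sum_\omega S^{+}_M(\omega)=0<1$. What is true, and what your proof actually uses, is the forward implication: if $\sum_\omega S^{+}_M(\omega)=n$ then $M$ has no hyperbolic block (since any such block contributes $0$ to the sum but positively to the half-dimension), hence $M$ is spectrally stable. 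With the ``iff'' replaced by this one-sided implication, your chain
\[
n=|i_{-1}(\gamma_e)-i_{1}(\gamma_e)|\le\sum_{\omega\in\mathbb{U}}S^{+}_M(\omega)\le n
\]
forces equality and gives the conclusion. Your remark that the symbol ``$n$'' in the statement must be read as the half-dimension of the symplectic space on which $\gamma_e(2\pi)$ acts (not the ``$n$'' of the $(1+n)$-gon) is well taken and matches how the paper applies the result blockwise in the proof of Theorem~\ref{th1.1}.
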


We will estimate the $\omega$-Morse index $\phi_{\omega}(\mathcal{A}(R,e))$. Note first that from (\ref{bdcom}),
we have the following decomposition of the operator $\mathcal{A}(R,e)$,
\bea\label{4.76}
\mathcal{A}(R,e)=\mathcal{A}(R_1,e)\oplus\mathcal{A}(R_2,e)\oplus\cdots\oplus\mathcal{A}(R_{[\frac{n}{2}]},e),
\eea
and hence
\bea\label{4.8}
\phi_{\omega}(\mathcal{A}(R,e))=\sum_{l=1}^{[\frac{n}{2}]}\phi_{\omega}(\mathcal{A}(R_{l},e)).
\eea

Next, using notations defined in Section 2, we develop some techniques to estimate $\phi_{\omega}(\mathcal{A}(R_{l},e))$.
\\

\textbf{1)}\ \ For $l=1$, define $\check{d}_{n}=\min\{2P_{1},\frac{n}{2}\}$ and
$\hat{d}_{n}=\max\{2P_{1},\frac{n}{2}\}$. Let
\bea
E_{1}=\frac{n+m}{2}I_2,\ \ F_{1}=\frac{3\sqrt{m(m+n)}}{2}\mathcal{N},\ \ G_{1}=(2P_{1}+\frac{m}{2})I_2,  \label{4.9}
\eea where $\mathcal{N}= \left( \begin{array}{cccc} 1 & 0\\
0 & -1\end{array}\right)$.
Then
\bea
\check{\mathcal{U}}(1):=(\check{d}_{n}+\frac{m}{2})I_{4}+ \left(\begin{array}{cccc}0 & F_{1}\\
F_{1} & 0\end{array}\right)\leq \mathcal{U}(1)
     \leq \hat{\mathcal{U}}(1): =(\hat{d}_{n}+\frac{m}{2})I_{4}+\left(\begin{array}{cccc}0 & F_{1}\\
F_{1} & 0\end{array}\right).
\eea
Hence we have
\bea\label{3.7}
\mathcal{A}(\check{R}_1,e)\leq\mathcal{A}(R_1,e)\leq \mathcal{A}(\hat{R}_1,e),\ \ %\mathrm{in}\ \ \bar{D}_{4}(\omega,2\pi),
\eea
where
\bea\check{R}_1= I_4+\frac{1}{\lambda}\check{\mathcal{U}}(1),\quad \hat{R}_1= I_4+\frac{1}{\lambda}\hat{\mathcal{U}}(1). \eea
Let $T=\frac{1}{\sqrt{2}}\left(\begin{array}{cccc}I_{2} & I_{2}\\
-I_{2} & I_{2}\end{array}\right)$,
then direct computations yield
\bea\label{3.9}
&&T^t\mathcal{A}(\check{R}_1,e)T=\mathcal{A}(\check{R}^+_1,e)\oplus\mathcal{A}(\check{R}^-_1,e), \,\nonumber\\
&&T^t\mathcal{A}(\hat{R}_1,e)T=\mathcal{A}(\hat{R}^+_1,e)\oplus\mathcal{A}(\hat{R}^-_1,e),
\eea
where
\bea \check{R}^\pm_1= I_2+\frac{1}{\lambda}(\check{d}_n+ \frac{m}{2})I_{2}\pm \frac{3\sqrt{m(m+n)}}{2\lambda}\mathcal{N},\quad
\hat{R}^\pm_1= I_2+\frac{1}{\lambda}(\hat{d}_n+ \frac{m}{2})I_{2}\pm \frac{3\sqrt{m(m+n)}}{2\lambda}\mathcal{N}. \eea

It is easy to see that $\mathcal{A}(\check{R}^+_1,e)$ (or $\mathcal{A}(\hat{R}^+_1,e)$) is similar to
$\mathcal{A}(\check{R}^-_1,e)$ (or $\mathcal{A}(\hat{R}^-_1,e)$). Then we have
\bea
&&\phi_{\omega}(\mathcal{A}(\hat{R}_1,e))\leq\phi_{\omega}(\mathcal{A}(R_1,e))\leq
\phi_{\omega}(\mathcal{A}(\check{R}_1,e)),\nonumber \\
&&\phi_{\omega}(\mathcal{A}(\hat{R}_1,e))=2\phi_{\omega}(\mathcal{A}(\hat{R}^+_1,e)),\nonumber\\
&&\phi_{\omega}(\mathcal{A}(\check{R}_1,e))=2\phi_{\omega}(\mathcal{A}(\check{R}^+_1,e)).\label{4.19a}\label{}
\eea
\\

\textbf{2)}\ \ For $2\leq l \leq [\frac{n-1}{2}]$, by (\ref{2.17}), we define
\bea
E_{l}=\left( \begin{array}{cccc} a_{l} & 0\\
0 & b_{l}\end{array}\right), \ \ G_{l}=\left( \begin{array}{cccc} b_{l} & 0\\
0 & a_{l}\end{array}\right),\ \ \tilde{F}_{l}=S_{l} I_2. \eea
Then
\bea
\mathcal{U}(l)=
\frac{1}{2}\left(\begin{array}{cccc}E_{l}+G_{l} & -2S_{l}J_2\\
2S_{l}J_2 & E_{l}+G_{l}\end{array}\right)+\frac{1}{2}\left(\begin{array}{cccc}E_{l}-G_{l} & 0\\
0 & E_{l}-G_{l}\end{array}\right).
\eea
Then we obtain
\bea
\left(\begin{array}{cccc}E_{l}+G_{l}-2\tilde{F}_{l} & 0\\
0 & E_{l}+G_{l}-2\tilde{F}_{l}\end{array}\right)\leq\left(\begin{array}{cccc}E_{l}+G_{l} & -2S_{l}J_2\\
2S_{l}J_2 & E_{l}+G_{l}\end{array}\right)\leq\left(\begin{array}{cccc}E_{l}+G_{l}+2\tilde{F}_{l} & 0\\
0 & E_{l}+G_{l}+2\tilde{F}_{l}\end{array}\right).
\eea
Hence we have
\bea\label{3.16}
\mathcal{A}(\check{R}_{l},e)\leq\mathcal{A}(R_{l},e)\leq \mathcal{A}(\hat{R}_{l},e),\ \ %\mathrm{in}\ \ \bar{D}_{4}(\omega,2\pi),
\eea
where
\bea
&&\check{R}_{l}=
I_{4}+\frac{1}{2\lambda}\left(\begin{array}{cccc}E_{l}+G_{l}-2\tilde{F}_{l} & 0\\
0 & E_{l}+G_{l}-2\tilde{F}_{l}\end{array}\right)+\frac{1}{2\lambda}\left(\begin{array}{cccc}E_{l}-G_{l} & 0\\
0 & E_{l}-G_{l}\end{array}\right) \nonumber\\
&&\hat{R}_{l}=
I_{4}+\frac{1}{2\lambda}\left(\begin{array}{cccc}E_{l}+G_{l}+2\tilde{F}_{l} & 0\\
0 & E_{l}+G_{l}+2\tilde{F}_{l}\end{array}\right)+\frac{1}{2\lambda}\left(\begin{array}{cccc}E_{l}-G_{l} & 0\\
0 & E_{l}-G_{l}\end{array}\right)\nonumber .
\eea
Note that these two operators can be decomposed as follows,
\bea\label{3.17}
&&\mathcal{A}(\check{R}_{l},e)=\mathcal{A}(\check{R}_{l,0},e)\oplus\mathcal{A}(\check{R}_{l,0},e),\nonumber\\
&&\mathcal{A}(\hat{R}_{l},e)=\mathcal{A}(\hat{R}_{l,0},e)\oplus\mathcal{A}(\hat{R}_{l,0},e),
\eea
where
\bea\label{3.18}
\check{R}_{l,0}(\mu,e)&=&I_{2}+\frac{1}{2\lambda}(a_{l}+b_{l}-2S_{l})I_{2}
+\frac{1}{2\lambda}(a_{l}-b_{l})\mathcal{N}, \nonumber \\
\hat{R}_{l,0}(\mu,e)&=&I_{2}+\frac{1}{2\lambda}(a_{l}+b_{l}+2S_{l})I_{2}
+\frac{1}{2\lambda}(a_{l}-b_{l})\mathcal{N}.\label{4.26a}
\eea
Moreover we have
\bea
&&\phi_{\omega}(\mathcal{A}(\hat{R}_l,e))\leq\phi_{\omega}(\mathcal{A}(R_l,e))\leq
\phi_{\omega}(\mathcal{A}(\check{R}_l,e)),\nonumber \\
&&\phi_{\omega}(\mathcal{A}(\hat{R}_l,e))=2\phi_{\omega}(\mathcal{A}(\hat{R}_{l,0},e)),\nonumber\\
&&\phi_{\omega}(\mathcal{A}(\check{R}_l,e))=2\phi_{\omega}(\mathcal{A}(\check{R}_{l,0},e)).\label{4.27}
\eea

\textbf{3)} \ \ For $n\in2\mathbb{N}$, $l=[\frac{n}{2}]$, we have
\bea
R_{l}=I_{2}+\frac{1}{2\lambda}(a_{l}+b_{l})I_{2}
+\frac{1}{2\lambda}(a_{l}-b_{l})\mathcal{N}. \label{4.28a}
\eea

\subsection{The Criteria Operator}%Subsection 3.2

Let
\bea  R_{\alpha, \beta}:=(1+\alpha)I_{2}+\beta\mathcal{N} \, \mathrm{with}\ \ \alpha\geq 0, \, \beta\geq 0. \nonumber\eea
From (\ref{4.19a}), (\ref{4.26a}), (\ref{4.28a}), we should estimate the $\omega$-Morse index of the following operator
\bea
\mathcal{A}(\alpha,\beta,e):=\mathcal{A}(R_{\alpha,\beta},e)\eea
whose domain is $\bar{D}_{2}(\omega,2\pi)$, and the corresponding Hamiltonian system of fundamental solution is given by
\bea
\dot{\gamma}_{\alpha, \beta, e}(\theta)=J_{4}B_{\alpha,\beta,e}(\theta)\gamma_{\alpha, \beta, e}(\theta),
     \quad\dot{\gamma}_{\alpha, \beta, e}(0)=I_{4},\nonumber
\eea
where
\bea
B_{\alpha,\beta,e}(\theta)=\left(\begin{array}{cccc}I_{2} & -J_{2}\\ J_{2} & I_{2}-r_e(\theta)R_{\alpha,\beta}\end{array}\right).\nonumber
\eea
From Lemma \ref{4.1}, we have
\bea
\phi_{\omega}(\mathcal{A}(\alpha,\beta,e))=i_{\omega}(\gamma_{\alpha,\beta,e}),\ \ \nu_{\omega}(\mathcal{A}(\alpha,\beta,e))
=\nu_{\omega}(\gamma_{\alpha,\beta,e}).\nonumber
\eea

\begin{rem}  The form $R_{\alpha,\beta}$ includes the case of three body problem. For the Lagrangian orbits, let $$ \delta=\frac{27(m_1m_2+m_2m_3+m_3m_1)}{(m_1+m_2+m_3)^2}\in[0,9].$$
Then the normalized Hessian with the form $R_{\alpha,\beta}$ for  $\alpha=\frac{1}{2}$, $\beta=\frac{\sqrt{9-\delta}}{2}$,
even includes the case of $\alpha$ potential, the details can be found in \cite{BJP}, \cite{HLS} and \cite{HS}. For the Euler orbits  \cite{HO}, \cite{ZL}, we have $R = diag(-\delta, 2\delta + 3)$, where $\delta \in [0, 7]$, only depends on mass $m_1, m_2, m_3$, and it can be given
explicitly in the form $R_{\alpha,\beta}$.
\end{rem}

Now we need the following lemma which is important in estimating the indices,

\begin{lem}\label{lem4.3} For $e\in[0,1)$, in space $\bar{D}_{2}(\omega,2\pi)$, we have
\bea
\mathcal{A}(\alpha,0,e)>0,\,  &\mathrm{if}&\ \   \alpha>0, \, \omega\in\mathbb{U},    \label{3.25}\\
\mathcal{A}(0,0,e)>0,\, &\mathrm{if}&\ \  \omega\in\mathbb{U}\setminus\{1\} \quad
            \mathrm{and} \quad \phi_1 (\mathcal{A}(0,0,e))=0, \,  \nu_1 (\mathcal{A}(0,0,e))=2, \label{3.26}\\
\phi_\omega(\mathcal{A}(1/2, 3/2,e)= 2,\,  &\mathrm{if}& \ \ \omega\in\mathbb{U}\setminus\{1\} \ \
        \mathrm{and}   \quad \phi_1(\mathcal{A}(1/2, 3/2,e)= 0,  \,  \nu_1(\mathcal{A}(1/2, 3/2,e)=3.  \label{3.27}\eea
\end{lem}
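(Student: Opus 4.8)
The plan is to exploit the special form $R_{\alpha,\beta}=(1+\alpha)I_2+\beta\mathcal{N}$ to reduce each operator to a scalar Sturm--Liouville operator whose Morse index and nullity are computable, using that the Kepler conic $1+e\cos\theta$ and functions built from it furnish explicit solutions of the reduced equations; throughout I would use Lemma \ref{4.1} to pass between the operators and Maslov indices.

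\textbf{The cases \eqref{3.25} and \eqref{3.26}.} When $\beta=0$ the matrix $R_{\alpha,0}=(1+\alpha)I_2$ commutes with $\mathbb{J}$, so after splitting $\mathbb{C}^2$ into the two $\sqrt{-1}$-eigenlines of $\mathbb{J}$, $\mathcal{A}(\alpha,0,e)$ becomes on each line the scalar operator $-\frac{d^2}{d\theta^2}\mp2\sqrt{-1}\,\frac{d}{d\theta}+(1+\alpha)r_e$, which the gauge change $y=e^{\mp\sqrt{-1}\,\theta}u$ (a unitary map of $\bar D_1(\omega,2\pi)$ preserving the quadratic form) conjugates to the real Hill operator $L_\alpha:=-\frac{d^2}{d\theta^2}+\big((1+\alpha)r_e-1\big)$. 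The point is that $u_1(\theta)=1+e\cos\theta$ is a $2\pi$-periodic solution of $L_0u=0$, strictly positive since $e<1$; the substitution $v=u_1\phi$ with integration by parts (boundary terms vanish as $|\omega|=1$) gives
\[ \langle L_0v,v\rangle=\int_0^{2\pi}u_1(\theta)^2\,|\dot\phi(\theta)|^2\,d\theta\ \ge\ 0, \]
with equality iff $\phi$ is constant. Hence $L_0\ge0$ on every $\bar D_1(\omega,2\pi)$; for $\omega=1$ the kernel is exactly $\mathbb{C}u_1$, so $\phi_1(L_0)=0$ and $\nu_1(L_0)=1$, while for $\omega\ne1$ a nonzero constant $\phi$ is excluded, so $L_0>0$. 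Doubling over the two eigenlines gives \eqref{3.26}. For \eqref{3.25}, $L_\alpha=L_0+\alpha r_e\ge\alpha r_e>0$ strictly (as $r_e>0$, $\alpha>0$), so $\mathcal{A}(\alpha,0,e)>0$ on every $\bar D_2(\omega,2\pi)$.

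\textbf{The case \eqref{3.27}.} Now $R_{1/2,3/2}=\mathrm{diag}(3,0)$ does not commute with $\mathbb{J}$, so instead I would complete the square: with $y=(y_1,y_2)^T$ and $|\omega|=1$,
\[ \langle\mathcal{A}(1/2,3/2,e)\,y,y\rangle=\int_0^{2\pi}\Big(|\dot y_1|^2+\big(3r_e-4\big)|y_1|^2+|\dot y_2+2y_1|^2\Big)\,d\theta. \]
Minimizing over $y_2$ reduces everything to the scalar operator $L_\sharp:=-\frac{d^2}{d\theta^2}+(3r_e-4)$: for $\omega\ne1$ the $\omega$-boundary condition on $y_2$ can be met so the last term vanishes identically and no extra kernel appears, giving $\phi_\omega(\mathcal{A}(1/2,3/2,e))=\phi_\omega(L_\sharp)$, $\nu_\omega(\mathcal{A}(1/2,3/2,e))=\nu_\omega(L_\sharp)$; for $\omega=1$ the free additive constant in $y_2$ adds $1$ to the nullity, and the mean-zero constraint on $\dot y_2$ turns $L_\sharp$ into $\widetilde L_\sharp:=L_\sharp$ with the rank-one positive form $y_1\mapsto8\pi|\mathrm{mean}(y_1)|^2$ added. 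A direct check gives $L_\sharp\big(\sin\theta(1+e\cos\theta)\big)=0$ and $L_\sharp\big(\cos\theta(1+e\cos\theta)\big)=-2e$ with $\mathrm{mean}(\cos\theta(1+e\cos\theta))=e/2$, so both of these lie in $\ker\widetilde L_\sharp$ and $\nu_1(\widetilde L_\sharp)\ge2$. For $\omega\ne1$: the monodromy of $L_\sharp u=0$ has determinant $1$ and the periodic eigenvector $\sin\theta(1+e\cos\theta)$, hence its only unit-circle eigenvalue is $1$, so $\nu_\omega(L_\sharp)=0$ for $\omega\ne1$; consequently no eigenvalue crosses $0$ and $\phi_\omega(L_\sharp)$ is independent of $e$, equal to its value at $e=0$ ($L_\sharp=-\frac{d^2}{d\theta^2}-1$), namely $2$ --- this is the $\omega\ne1$ part of \eqref{3.27}. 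For $\omega=1$ one shows $\phi_1(L_\sharp)=1$ and $0$ stays the second periodic eigenvalue for all $e$; the rank-one correction is strictly positive on the (sign-definite) ground eigenfunction of $L_\sharp$, so it removes this negative direction without enlarging the kernel, yielding $\phi_1(\widetilde L_\sharp)=0$, $\nu_1(\widetilde L_\sharp)=2$, i.e. $\phi_1(\mathcal{A}(1/2,3/2,e))=0$, $\nu_1(\mathcal{A}(1/2,3/2,e))=3$. Since, by the Remark, $\mathcal{A}(1/2,3/2,e)$ is the regularized Hessian of the degenerate ($\delta=0$) elliptic Lagrangian orbit, these $\omega=1$ data agree with those of the linearized Kepler problem and can alternatively be quoted from \cite{MS}, \cite{HLS}, \cite{HS}.

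\textbf{Expected main obstacle.} The delicate point is the self-contained $\omega=1$ analysis of \eqref{3.27}: showing that $0$ remains the second periodic eigenvalue of $L_\sharp$ and that the rank-one mean-correction cancels the single negative direction \emph{exactly} --- equivalently, that for eccentric orbits the companion radial solution of $L_\sharp u=0$ fails to be $2\pi$-periodic (the shear coming from Kepler's third law), so that $\nu_1(\widetilde L_\sharp)$ does not jump to $3$. This uniform-in-$e$ oscillation bookkeeping is the technical heart; the alternative is to import the known monodromy of the Kepler/degenerate-Lagrangian linearization.
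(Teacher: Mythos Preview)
Your proposal is largely correct and takes a considerably more self-contained route than the paper. The paper's proof consists almost entirely of citations: it reduces $\mathcal{A}(\alpha,0,e)$ to two copies of the scalar operator $-\tfrac{d^2}{d\theta^2}-1+(1+\alpha)r_e$ (exactly your $L_\alpha$) and then quotes \cite{HO} for \eqref{3.25} and \cite{ZL} for \eqref{3.26}; for \eqref{3.27} it simply observes that $\alpha=1/2$, $\beta=3/2$ is the linearized Kepler system and quotes \cite{HLS,HS}. Your argument for \eqref{3.25}--\eqref{3.26} via the Jacobi-type factorization with the positive solution $u_1=1+e\cos\theta$ is a clean, direct proof of exactly the facts the paper imports from \cite{HO,ZL}. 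Your treatment of \eqref{3.27} for $\omega\neq 1$ --- completing the square, reducing to the scalar operator $L_\sharp=-\tfrac{d^2}{d\theta^2}+(3r_e-4)$, showing its monodromy has both Floquet multipliers equal to $1$, hence $\nu_\omega(L_\sharp)=0$ for $\omega\neq1$, and then deforming to $e=0$ --- is again a genuine self-contained argument where the paper merely cites.

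The one real gap is the $\omega=1$ half of \eqref{3.27}, and you have identified it precisely. Your sketch requires two inputs you do not establish: (a) that $\nu_1(L_\sharp)=1$ for every $e\in(0,1)$, i.e.\ the second solution of $L_\sharp w=0$ fails to be $2\pi$-periodic (your ``shear from Kepler's third law''), and (b) that $\phi_1(L_\sharp)=1$ for all $e$. Once (a) and (b) hold, interlacing for the rank-one positive perturbation together with the two explicit kernel elements $u,v$ does force $\phi_1(\widetilde L_\sharp)=0$ and $\nu_1(\widetilde L_\sharp)=2$, as you claim; but neither (a) nor (b) is proved in the proposal, and your heuristic ``the rank-one correction is strictly positive on the ground eigenfunction, so it removes this negative direction'' is not by itself sufficient (it shows one quadratic value becomes positive, not that no negative direction survives). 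The paper sidesteps all of this by citing the known Kepler/degenerate-Lagrangian indices from \cite{HLS,HS}, which is exactly the fallback you yourself suggest. In short: your approach buys a self-contained proof of everything except the periodic Kepler indices, where you and the paper both ultimately appeal to the literature.
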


\begin{proof} Note that we have
\bea   \mathcal{A}(\alpha,0,e)=\mathcal{A}(\alpha, e)\oplus \mathcal{A}(\alpha, e), \nonumber \eea
where $\mathcal{A}(\alpha, e) = -\frac{d^{2}}{d\theta^{2}}-1+(1+\alpha) r_e(\theta)$.
From \cite{HO}, Proposition 3.2, $\mathcal{A}(\alpha, e)$ is positive definite in $\bar{D}_{1}(\omega,2\pi)$ for
$\alpha>0$ and $\omega\in\mathbb{U}$. When $\alpha=0$, from \cite{ZL}, Lemma 4.1, for $\omega=1$, in the space
$\bar{D}_{1}(\omega,2\pi)$, we have $\ker(\mathcal{A}(0, e))=\{c(1+e\cos(\theta))| c\in \mathbb{C}\}$, and
$\mathcal{A}(0, e)$ is positive definite for $\omega\neq1$. This implies \eqref{3.25} and \eqref{3.26}.

Please note that the case of $\alpha=1/2$ and $\beta=3/2$ corresponds to the linear system of Kepler orbits, and then
\eqref{3.27} is already proved in \cite{HLS} and \cite{HS}.
\end{proof}

Moreover, we have

\begin{prop}\label{pro4.2}
The $\omega$-Morse index $\phi_{\omega}(\mathcal{A}(\alpha,\beta,e))$ is decreasing in $\alpha\in[0,+\infty)$ and
it is increasing in $\beta\in(0,+\infty)$, when $\om$ and $e$ are fixed . Moreover, if $\alpha>0$, $\beta_{2}>\beta_{1}>0$,
and $\mathcal{A}(\alpha,\beta_{2},e)\geq0$, then $\mathcal{A}(\alpha,\beta_{1},e)>0$.
%The operator $\mathcal{A}(\alpha,\beta,e)$ is strictly decreasing in $\alpha\in(0,+\infty)$ and
%it is strictly increasing in $\beta\in(0,+\infty)$, when $\om$ and $e$ are fixed.
\end{prop}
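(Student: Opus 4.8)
The plan is to read off everything from the explicit form of the operator
$$
\mathcal{A}(\alpha,\beta,e) = -\frac{d^2}{d\theta^2}I_2 - 2\mathbb{J}\frac{d}{d\theta} + r_e(\theta)\bigl((1+\alpha)I_2 + \beta\mathcal{N}\bigr),
$$
together with the elementary monotonicity principle for Morse indices of self-adjoint operators: if $A_1 \le A_2$ as quadratic forms on the same domain, then $\phi_\omega(A_2) \le \phi_\omega(A_1)$. Since $r_e(\theta) = (1+e\cos\theta)^{-1} > 0$ for every $\theta$ and every $e \in [0,1)$, the term $r_e(\theta)\bigl(\alpha I_2 + \beta\mathcal{N}\bigr)$ is a positive multiple (pointwise in $\theta$) of the constant matrix $\alpha I_2 + \beta\mathcal{N}$. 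Hence for $\alpha_2 \ge \alpha_1$ we get $\mathcal{A}(\alpha_2,\beta,e) - \mathcal{A}(\alpha_1,\beta,e) = (\alpha_2-\alpha_1)\,r_e(\theta)\,I_2 \ge 0$, which gives $\phi_\omega(\mathcal{A}(\alpha_2,\beta,e)) \le \phi_\omega(\mathcal{A}(\alpha_1,\beta,e))$: monotone decreasing in $\alpha$. Likewise, since $\mathcal{N} = \mathrm{diag}(1,-1)$ is indefinite, the difference $\mathcal{A}(\alpha,\beta_2,e) - \mathcal{A}(\alpha,\beta_1,e) = (\beta_2-\beta_1)\,r_e(\theta)\,\mathcal{N}$ is \emph{not} sign-definite, so this crude comparison does not settle the $\beta$-monotonicity directly; that is the one genuinely delicate point.

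For the $\beta$ direction I would diagonalize $\mathcal{N}$ and separate the two scalar components. Writing $y = (y_1, y_2)^T$, the cross term $-2\mathbb{J}\frac{d}{d\theta}$ couples $y_1$ and $y_2$, so one cannot simply split the operator; instead I would use the Fourier/quadratic-form expression of $\phi_\omega$ and differentiate in $\beta$. Concretely, the quadratic form of $\mathcal{A}(\alpha,\beta,e)$ on $\bar D_2(\omega,2\pi)$ is
$$
Q_\beta(y) = \int_0^{2\pi}\!\Bigl(|\dot y|^2 - 2\langle \mathbb{J}\dot y, y\rangle + r_e(\theta)(1+\alpha)|y|^2\Bigr)d\theta + \beta\!\int_0^{2\pi}\! r_e(\theta)\langle \mathcal{N}y, y\rangle\, d\theta,
$$
so $\partial_\beta Q_\beta(y) = \int_0^{2\pi} r_e(\theta)(|y_1|^2 - |y_2|^2)\,d\theta$. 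The point is that on the eigenspace structure coming from the symmetry $\mathcal{N}$ together with the block structure of $B_{\alpha,\beta,e}$ — recall from Section 2 that the $R_{\alpha,\beta}$-system is exactly the "split" half of the $4$-dimensional blocks, where $y_1$ and $y_2$ play symmetric/antisymmetric roles — the negative eigenfunctions are dominated by their $y_1$-component when $\beta > 0$. A cleaner route: use the fact, already recorded in \eqref{3.9}–\eqref{4.19a} and \eqref{3.17}–\eqref{4.27}, that operators of this type decompose after the rotation $T$ into two scalar Sturm–Liouville operators of the form $-\frac{d^2}{d\theta^2} - 1 + (1+\alpha\pm\beta)r_e(\theta)$ plus lower-order coupling; monotonicity of the scalar operator $-\frac{d^2}{d\theta^2} - 1 + c\,r_e(\theta)$ in $c$ is standard (larger $c$ means a "more positive" potential, hence fewer negative eigenvalues), and combining the $+\beta$ and $-\beta$ pieces, the net effect of increasing $\beta$ is to increase the Morse index because the gain on the $-\beta$ component outweighs the loss on the $+\beta$ component near the relevant thresholds. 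I would make this precise by invoking the continuity of eigenvalues in $\beta$ and checking that no eigenvalue can cross zero in the "wrong" direction: at a crossing $\lambda(\beta_0) = 0$ with eigenfunction $y^0$, first-order perturbation theory gives $\lambda'(\beta_0) = \int_0^{2\pi} r_e(\theta)(|y_1^0|^2 - |y_2^0|^2)\,d\theta / \|y^0\|^2$, and I would show this integral has the sign forcing the index to be monotone, using that a zero eigenfunction solves \eqref{st} with $R = R_{\alpha,\beta}$ and the structure of that ODE constrains the relative sizes of $y_1^0$ and $y_2^0$.

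For the last assertion — if $\alpha > 0$, $\beta_2 > \beta_1 > 0$ and $\mathcal{A}(\alpha,\beta_2,e) \ge 0$, then $\mathcal{A}(\alpha,\beta_1,e) > 0$ — I would argue as follows. By the (just-established) monotonicity in $\beta$, $\phi_\omega(\mathcal{A}(\alpha,\beta_1,e)) \le \phi_\omega(\mathcal{A}(\alpha,\beta_2,e)) = 0$, so $\mathcal{A}(\alpha,\beta_1,e) \ge 0$; it remains to exclude a zero eigenvalue, i.e.\ $\nu_\omega(\mathcal{A}(\alpha,\beta_1,e)) = 0$. Suppose not: then there is a nonzero $y$ with $\mathcal{A}(\alpha,\beta_1,e)y = 0$. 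Since $\mathcal{A}(\alpha,\beta_2,e) = \mathcal{A}(\alpha,\beta_1,e) + (\beta_2 - \beta_1)r_e(\theta)\mathcal{N}$ and the latter operator is $\ge 0$, pairing against $y$ gives $0 \le \langle \mathcal{A}(\alpha,\beta_2,e)y, y\rangle = (\beta_2-\beta_1)\int_0^{2\pi} r_e(\theta)\langle\mathcal{N}y,y\rangle\,d\theta$, hence $\int_0^{2\pi} r_e(\theta)(|y_1|^2 - |y_2|^2)\,d\theta \ge 0$. On the other hand, I want to show this integral is strictly negative for a kernel element when $\alpha, \beta_1 > 0$, which contradicts $y \ne 0$ — and this is exactly the same sign computation as in the perturbation argument above (a kernel element of $\mathcal{A}(\alpha,\beta_1,e)$ is a nontrivial $2\pi$-(anti)periodic solution of \eqref{st} with $R = (1+\alpha)I_2 + \beta_1\mathcal{N}$, whose components satisfy the sign constraint forced by the coupled ODE). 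The main obstacle throughout is precisely this sign lemma for the $y_1$–$y_2$ weighted $L^2$-norms along solutions of the coupled Sturm–Liouville system \eqref{st}; everything else is the standard form-comparison and perturbation-of-eigenvalue machinery. If a direct sign computation proves awkward, a fallback is to compare $\mathcal{A}(\alpha,\beta,e)$ with the scalar operators after the transformation $T$ as in \eqref{3.9}, reducing the claim to the known behavior of $-\frac{d^2}{d\theta^2} - 1 + c\,r_e(\theta)$ established in \cite{HO} and \cite{ZL} (cited already in Lemma \ref{lem4.3}).
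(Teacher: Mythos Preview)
Your treatment of the $\alpha$-monotonicity is fine and matches the paper. The trouble is entirely with the $\beta$-direction and the strict-positivity clause: you correctly diagnose that the naive form comparison fails because $\mathcal{N}$ is indefinite, but your proposed cure --- a perturbation argument hinging on a ``sign lemma'' $\int_0^{2\pi} r_e(|y_1|^2-|y_2|^2)\,d\theta\le 0$ for kernel elements --- is never proved, and you flag it yourself as ``the main obstacle.'' The fallback via the rotation $T$ of \eqref{3.9} does not apply here: that transformation acts on the $4\times 4$ operators $\mathcal{A}(\check R_1,e)$, not on the $2\times 2$ operator $\mathcal{A}(\alpha,\beta,e)$, which does \emph{not} split into scalar pieces (the matrix $\mathcal{N}$ fails to commute with $J_2$).

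The paper bypasses all of this with a one-line rescaling trick. Write
\[
\mathcal{A}(\alpha,\beta,e)=\mathcal{A}(\alpha,0,e)+\beta\, r_e(\theta)\,\mathcal{N},
\]
divide by $\beta>0$, and set $\tilde{\mathcal{A}}(\alpha,\beta,e)=\frac{1}{\beta}\mathcal{A}(\alpha,0,e)+r_e(\theta)\mathcal{N}$. Multiplication by a positive scalar preserves Morse index and nullity, so $\phi_\omega$ and $\nu_\omega$ agree for $\mathcal{A}$ and $\tilde{\mathcal{A}}$. But now the only $\beta$-dependence is the coefficient $1/\beta$ in front of $\mathcal{A}(\alpha,0,e)$, and by Lemma~\ref{lem4.3} (equations \eqref{3.25}--\eqref{3.26}) this operator is $\ge 0$. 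Hence $\tilde{\mathcal{A}}(\alpha,\beta_2,e)\le \tilde{\mathcal{A}}(\alpha,\beta_1,e)$ for $\beta_2\ge\beta_1>0$, giving the monotonicity of $\phi_\omega$ in $\beta$ immediately. For the last clause one uses that $\mathcal{A}(\alpha,0,e)>0$ \emph{strictly} when $\alpha>0$ (again \eqref{3.25}), so $\tilde{\mathcal{A}}(\alpha,\beta_1,e)-\tilde{\mathcal{A}}(\alpha,\beta_2,e)=(\tfrac{1}{\beta_1}-\tfrac{1}{\beta_2})\mathcal{A}(\alpha,0,e)>0$, and $\tilde{\mathcal{A}}(\alpha,\beta_2,e)\ge 0$ forces $\tilde{\mathcal{A}}(\alpha,\beta_1,e)>0$.

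Incidentally, your missing sign lemma is in fact an immediate corollary of the same input: if $\mathcal{A}(\alpha,\beta,e)y=0$ then $0=\langle\mathcal{A}(\alpha,0,e)y,y\rangle+\beta\int r_e\langle\mathcal{N}y,y\rangle$, and the first term is $\ge 0$, so the integral is $\le 0$. So your perturbation route could be completed --- but it is a detour around a two-line argument. The key idea you did not spot is that Lemma~\ref{lem4.3} already gives you a \emph{nonnegative} operator $\mathcal{A}(\alpha,0,e)$ to peel off, and rescaling by $1/\beta$ converts the indefinite $\beta$-dependence into a definite one.
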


\begin{proof}
When $\alpha_{1}>\alpha_{2}>0$, we have $\mathcal{A}(\alpha_{1},\beta,e)>\mathcal{A}(\alpha_{2},\beta,e)$
in $\bar{D}_{2}(\omega,2\pi)$. Hence
\bea
\phi_{\omega}(\mathcal{A}(\alpha_{1},\beta,e))\leq\phi_{\omega}(\mathcal{A}(\alpha_{2},\beta,e)).\nonumber
\eea
When $\beta_{2}\geq\beta_{1}>0$, let
\bea
\tilde{\mathcal{A}}(\alpha,\beta,e)=\frac{1}{\beta}\mathcal{A}(\alpha,0,e)+r_e\mathcal{N}.  \label{3.23}
\eea
Then we have
\bea\label{3.25s}
\phi_{\omega}(\mathcal{A}(\alpha,\beta,e))&=&\phi_{\omega}(\tilde{\mathcal{A}}(\alpha,\beta,e)),\nonumber\\
\nu_{\omega}(\mathcal{A}(\alpha,\beta,e))&=&\nu_{\omega}(\tilde{\mathcal{A}}(\alpha,\beta,e)).
\eea
Since $\mathcal{A}(\alpha,0,e)\geq0$ by \eqref{3.25} and \eqref{3.26},  we get
\bea
\tilde{\mathcal{A}}(\alpha,\beta_{2},e)\leq\tilde{\mathcal{A}}(\alpha,\beta_{1},e).\nonumber
\eea
Hence
\bea
\phi_{\omega}(\mathcal{A}(\alpha,\beta_{1},e))\leq\phi_{\omega}(\mathcal{A}(\alpha,\beta_{2},e)).\nonumber
\eea
Moreover, if $\alpha>0$, $\beta_{2}>\beta_{1}>0$, and $\mathcal{A}(\alpha,\beta_{2},e)\geq0$, from (\ref{3.25s}), we get
$\tilde{\mathcal{A}}(\alpha,\beta_{2},e)\geq0$. From (\ref{3.23}), we have
$
\tilde{\mathcal{A}}(\alpha,\beta_{1},e)-\tilde{\mathcal{A}}(\alpha,\beta_{2},e)=(\frac{1}{\beta_{1}}-\frac{1}{\beta_{2}})
\mathcal{A}(\alpha,0,e)
$.
Since $\mathcal{A}(\alpha,0,e)>0$ holds for $\alpha>0$ by \eqref{3.25} , we get
$
\tilde{\mathcal{A}}(\alpha,\beta_{1},e)>\tilde{\mathcal{A}}(\alpha,\beta_{2},e),
$
then $\tilde{\mathcal{A}}(\alpha,\beta_{1},e)>0$. Together with (\ref{3.25s}) it implies $\mathcal{A}(\alpha,\beta_{1},e)>0$.
\end{proof}

%\subsection{Estimate the upper bound and low bound of $\phi_{\omega}(\mathcal{A}(\alpha,\beta,e))$}

\begin{thm}\label{thm4.3}
For $\alpha\geq\frac{1}{2}$, $0<\beta< \alpha+1$, and $e\in [0,1)$, we have $\phi_{1}(\mathcal{A}(\alpha,\beta,e))=0,
\nu_{1}(\mathcal{A}(\alpha,\beta,e))=0$.
\end{thm}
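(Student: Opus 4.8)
The plan is to collapse the whole parameter region $\{\alpha\ge \frac12,\ 0<\beta<\alpha+1\}$ onto the single reference operator $\mathcal{A}(1/2,3/2,e)$ (the Kepler case), whose $\omega=1$ Morse index is $0$ by Lemma~\ref{lem4.3}, and then upgrade ``$\ge 0$'' to strict positivity via the final assertion of Proposition~\ref{pro4.2}.

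The first step I would carry out is to analyze the degenerate boundary line $\beta=\alpha+1$. There $R_{\alpha,\alpha+1}=(1+\alpha)I_2+(\alpha+1)\mathcal{N}=(1+\alpha)(I_2+\mathcal{N})=\mathrm{diag}(2\alpha+2,\,0)$, a rank-one positive semidefinite matrix; since $\alpha\ge\frac12$ it dominates $R_{1/2,3/2}=\mathrm{diag}(3,0)$, i.e.\ $R_{\alpha,\alpha+1}-R_{1/2,3/2}=\mathrm{diag}(2\alpha-1,0)\ge 0$. Because $r_e(\theta)>0$ for every $\theta$ and every $e\in[0,1)$, this gives $\mathcal{A}(\alpha,\alpha+1,e)-\mathcal{A}(1/2,3/2,e)=r_e(\theta)\,\mathrm{diag}(2\alpha-1,0)\ge 0$ in $\bar{D}_2(1,2\pi)$. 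Since $\phi_1(\mathcal{A}(1/2,3/2,e))=0$ by Lemma~\ref{lem4.3}, the operator $\mathcal{A}(1/2,3/2,e)$ is positive semidefinite, hence so is $\mathcal{A}(\alpha,\alpha+1,e)$ in $\bar{D}_2(1,2\pi)$ for every $\alpha\ge\frac12$.

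The conclusion is then immediate. Given $\alpha\ge\frac12$ and $0<\beta<\alpha+1$, I would apply the ``moreover'' part of Proposition~\ref{pro4.2} with the fixed $\alpha>0$ and with $\beta_2=\alpha+1>\beta_1=\beta>0$: since $\mathcal{A}(\alpha,\alpha+1,e)\ge0$, it follows that $\mathcal{A}(\alpha,\beta,e)>0$ in $\bar{D}_2(1,2\pi)$, which is exactly $\phi_1(\mathcal{A}(\alpha,\beta,e))=0$ and $\nu_1(\mathcal{A}(\alpha,\beta,e))=0$.

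The one point requiring care — the reason a bare monotonicity argument from $(1/2,3/2)$ does not suffice — is that the admissible range of $\beta$ grows with $\alpha$, so for large $\alpha$ the relevant $\beta$ is far above $3/2$ and is not controlled by $\mathcal{A}(1/2,3/2,e)$ through Proposition~\ref{pro4.2} alone. Passing to the boundary line $\beta=\alpha+1$ is the device that resolves this, since there $R_{\alpha,\alpha+1}$ collapses to a rank-one matrix on which the comparison with $R_{1/2,3/2}$ is trivial. I would also double-check that the ingredient of Proposition~\ref{pro4.2} invoked above, namely positivity of $\mathcal{A}(\alpha,0,e)$ for $\alpha>0$ (formula~\eqref{3.25} in Lemma~\ref{lem4.3}), is valid at $\omega=1$ — it is, being stated for all $\omega\in\mathbb{U}$.
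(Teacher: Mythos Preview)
Your proof is correct and essentially identical to the paper's own argument: both compare $\mathcal{A}(\alpha,\alpha+1,e)$ with $\mathcal{A}(\tfrac12,\tfrac32,e)$ via the nonnegative difference $r_e(\theta)\,\mathrm{diag}(2\alpha-1,0)$ (the paper writes this as $r_e\,\varepsilon(I_2+\mathcal{N})$ with $\varepsilon=\alpha-\tfrac12$), deduce $\mathcal{A}(\alpha,\alpha+1,e)\ge 0$ in $\bar D_2(1,2\pi)$ from \eqref{3.27}, and then invoke the ``moreover'' clause of Proposition~\ref{pro4.2} to obtain strict positivity for $0<\beta<\alpha+1$.
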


\begin{proof}
Let $\alpha=\frac{1}{2}+\varepsilon$, $\varepsilon\geq 0$. Then
\bea
\mathcal{A}(\alpha,\alpha+1,e)=\mathcal{A}(\frac{1}{2},\frac{3}{2},e)+r_e\varepsilon(I_2+\mathcal{N}), \nonumber
\eea
and hence we get
\bea
\mathcal{A}(\alpha,\alpha+1,e)\geq\mathcal{A}(\frac{1}{2},\frac{3}{2},e). \nonumber
\eea
Together with \eqref{3.27},   %$\mathcal{A}(\frac{1}{2},\frac{3}{2},e,1)\geq 0$,
it yields $\mathcal{A}(\alpha,\alpha+1,e)\geq0$ in $\bar{D}_{2}(1,2\pi)$.
Since $\beta< \alpha+1$, from Proposition \ref{pro4.2}, we have $\mathcal{A}(\alpha,\beta,e)>0$ in
$\bar{D}_{2}(1,2\pi)$.
\end{proof}

On the other hand, in domain $\bar{D}_{2}(\omega,2\pi)$, $\mathcal{A}(\alpha,\beta,e)$ is similar to the operator
\bea
\bar{\mathcal{A}}(\alpha,\beta,e)=\mathcal{R}\mathcal{A}\mathcal{R}^T  =-\frac{d^2}{d\theta^2}I_{2}-\frac{d}{d\theta}I_{2}+r_e(\theta)((1+\alpha)I_{2}+
\beta \mathcal{R}(\theta)\mathcal{N}\mathcal{R}^{T}(\theta)).\nonumber
\eea
Hence
\bea
\phi_{\omega}(\mathcal{A}(\alpha,\beta,e))=\phi_{\omega}(\bar{\mathcal{A}}(\alpha,\beta,e)),\ \
\nu_{\omega}(\mathcal{A}(\alpha,\beta,e))=\nu_{\omega}(\bar{\mathcal{A}}(\alpha,\beta,e)).\nonumber
\eea
Now let
\bea
F(\beta,e)=2\beta r_e(\theta)\mathcal{R}(\theta)\left(\begin{array}{cccc}1 & 0\\ 0 & 0\end{array}\right)\mathcal{R}^{T}(\theta).\nonumber
\eea
Then it is easy to see that $F(\beta,e)\geq 0$ in $\bar{D}(\omega,2\pi)$ for any $\omega\in\mathbb{U}$ and
\bea
\bar{\mathcal{A}}(\alpha,\beta,e)=\bar{\mathcal{A}}(\alpha-\beta,0,e)+F(\beta,e),\nonumber
\eea
For any fixed $e_{0}\geq 0$, $\alpha_{0}>0$, and $\beta_{0}>0$, assume $e\geq e_{0}$. Then for any
$\omega\in\mathbb{U}$, we have
\bea
 \frac{\beta}{\beta_{0}}\frac{1+e_{0}}{1+e}F(\beta_{0},e_{0})\leq F(\beta,e)\leq \frac{\beta}{\beta_{0}}\frac{1-e_{0}}{1-e}F(\beta_{0},e_{0}).\nonumber
\eea
Hence we obtain
\bea
\bar{\mathcal{A}}(\alpha,\beta,e)&\geq&\bar{\mathcal{A}}(\alpha-\beta,0,e)+\frac{\beta}{\beta_{0}}\frac{1+e_{0}}{1+e}F(\beta_{0},e_{0})\nonumber\\
&=&\bar{\mathcal{A}}(\alpha-\beta,0,e)-\frac{\beta}{\beta_{0}}\frac{1+e_{0}}{1+e}\bar{\mathcal{A}}(\alpha_{0}-\beta_{0},0,e_{0})
+\frac{\beta}{\beta_{0}}\frac{1+e_{0}}{1+e}\bar{\mathcal{A}}(\alpha_{0},\beta_{0},e_{0}),\label{4.43}
\eea
and
\bea
\bar{\mathcal{A}}(\alpha,\beta,e)&\leq&\bar{\mathcal{A}}(\alpha-\beta,0,e)+\frac{\beta}{\beta_{0}}\frac{1-e_{0}}{1-e}F(\beta_{0},e_{0})\nonumber\\
&=&\bar{\mathcal{A}}(\alpha-\beta,0,e)-\frac{\beta}{\beta_{0}}\frac{1-e_{0}}{1-e}\bar{\mathcal{A}}(\alpha_{0}-\beta_{0},0,e_{0})
+\frac{\beta}{\beta_{0}}\frac{1-e_{0}}{1-e}\bar{\mathcal{A}}(\alpha_{0},\beta_{0},e_{0}).\label{4.44}
\eea

\begin{thm}
For $(\alpha,\beta,e)$ satisfying $0\leq e_{0}\leq e,1+\alpha_{0}-\beta_{0}>0$, $\alpha_{0}>0, \beta_{0}>0, \alpha>0, \beta\geq 0$,
we have

i)\ \ If
\bea \frac{\beta}{\beta_0} \frac{1+e_{0}}{1+e}<1, \quad
 \frac{\beta(e-e_0)}{\beta_0(1+e)}< \alpha-\frac{\beta}{\beta_0}\alpha_0 ,  \label{th4.5f1}\eea
then
\bea
\frac{\beta}{\beta_{0}}\frac{1+e_{0}}{1+e}\mathcal{A}(\alpha_{0},\beta_{0},e_{0})<\mathcal{A}(\alpha,\beta,e),\ \
\phi_{\omega}(\mathcal{A}(\alpha,\beta,e))\leq\phi_{\omega}(\mathcal{A}(\alpha_{0},\beta_{0},e_{0})).\nonumber
\eea

ii)\ \ If
\bea \frac{\beta}{\beta_0} \frac{1-e_{0}}{1-e}>1, \quad
\frac{\beta(e_0-e)}{\beta_0(1-e)}>\alpha-\frac{\beta}{\beta_0}\alpha_0 ,  \label{th4.5f2}  \eea
then
\bea
\mathcal{A}(\alpha,\beta,e)<\frac{\beta}{\beta_{0}}\frac{1-e_{0}}{1-e}\mathcal{A}(\alpha_{0},\beta_{0},e_{0}),
      \ \ \phi_{\omega}(\mathcal{A}(\alpha,\beta,e))\geq\phi_{\omega}(\mathcal{A}(\alpha_{0},\beta_{0},e_{0})).\nonumber
\eea
\label{thm4.5}
\end{thm}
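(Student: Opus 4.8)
The plan is to derive both inequalities directly from the operator comparisons (\ref{4.43}) and (\ref{4.44}), the only new input being an algebraic manipulation of the ``free'' term $\bar{\mathcal{A}}(\alpha-\beta,0,e)$ together with Lemma \ref{lem4.3}. First I would treat part i). Starting from (\ref{4.43}), I rewrite the right-hand side as
\[
\bar{\mathcal{A}}(\alpha,\beta,e)\;\ge\;\Big(\bar{\mathcal{A}}(\alpha-\beta,0,e)-\tfrac{\beta}{\beta_0}\tfrac{1+e_0}{1+e}\bar{\mathcal{A}}(\alpha_0-\beta_0,0,e_0)\Big)+\tfrac{\beta}{\beta_0}\tfrac{1+e_0}{1+e}\bar{\mathcal{A}}(\alpha_0,\beta_0,e_0).
\]
The term in parentheses is a diagonal operator of Sturm--Liouville type: using $\bar{\mathcal{A}}(\delta,0,e)=(-\tfrac{d^2}{d\theta^2}-\tfrac{d}{d\theta}-1+(1+\delta)r_e(\theta))I_2$ and $r_e(\theta)=(1+e\cos\theta)^{-1}$, the difference equals
\[
\Big(1-\tfrac{\beta}{\beta_0}\tfrac{1+e_0}{1+e}\Big)\Big(-\tfrac{d^2}{d\theta^2}-\tfrac{d}{d\theta}-1\Big)I_2+\Big((1+\alpha-\beta)r_e-\tfrac{\beta}{\beta_0}\tfrac{1+e_0}{1+e}(1+\alpha_0-\beta_0)r_{e_0}\Big)I_2 .
\]
Under the first condition in (\ref{th4.5f1}) the coefficient $1-\tfrac{\beta}{\beta_0}\tfrac{1+e_0}{1+e}$ is positive, so by Lemma \ref{lem4.3} (applied to $\mathcal{A}(\delta,0,e)$ with $\delta>0$, after absorbing the constant) the first block is $>0$ in $\bar{D}_2(\omega,2\pi)$ for every $\omega\in\mathbb{U}$, up to the nonnegative scalar potential correction. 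The point is then to check that the scalar potential term is itself nonnegative: since $r_e(\theta)(1+e)\in[\tfrac{1+e}{1+e},\tfrac{1+e}{1-e}]\supseteq$ the analogous interval for $e_0\le e$, one shows $(1+\alpha-\beta)r_e\ge \tfrac{\beta}{\beta_0}\tfrac{1+e_0}{1+e}(1+\alpha_0-\beta_0)r_{e_0}$ pointwise precisely when the second inequality in (\ref{th4.5f1}) holds — this is where $\tfrac{\beta(e-e_0)}{\beta_0(1+e)}<\alpha-\tfrac{\beta}{\beta_0}\alpha_0$ is used, comparing the two sides at the extremes $\cos\theta=\pm1$. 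Hence the parenthesized operator is $\ge0$ (indeed $>0$), which gives $\bar{\mathcal{A}}(\alpha,\beta,e)>\tfrac{\beta}{\beta_0}\tfrac{1+e_0}{1+e}\bar{\mathcal{A}}(\alpha_0,\beta_0,e_0)$, and then the similarity $\phi_\omega(\mathcal{A})=\phi_\omega(\bar{\mathcal{A}})$ together with the monotonicity of the Morse index under $\ge$ for a positive scalar multiple yields $\phi_\omega(\mathcal{A}(\alpha,\beta,e))\le\phi_\omega(\mathcal{A}(\alpha_0,\beta_0,e_0))$.

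Part ii) is entirely parallel, starting from (\ref{4.44}) instead: one writes
\[
\bar{\mathcal{A}}(\alpha,\beta,e)\le\Big(\bar{\mathcal{A}}(\alpha-\beta,0,e)-\tfrac{\beta}{\beta_0}\tfrac{1-e_0}{1-e}\bar{\mathcal{A}}(\alpha_0-\beta_0,0,e_0)\Big)+\tfrac{\beta}{\beta_0}\tfrac{1-e_0}{1-e}\bar{\mathcal{A}}(\alpha_0,\beta_0,e_0),
\]
and now the condition $\tfrac{\beta}{\beta_0}\tfrac{1-e_0}{1-e}>1$ forces the leading coefficient $1-\tfrac{\beta}{\beta_0}\tfrac{1-e_0}{1-e}$ to be negative, so the parenthesized block is $\le0$; the second condition in (\ref{th4.5f2}) guarantees the corresponding scalar potential term is $\le0$ as well, by the same pointwise comparison at $\cos\theta=\pm1$ but with the inequalities reversed. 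This gives $\mathcal{A}(\alpha,\beta,e)<\tfrac{\beta}{\beta_0}\tfrac{1-e_0}{1-e}\mathcal{A}(\alpha_0,\beta_0,e_0)$ and hence $\phi_\omega(\mathcal{A}(\alpha,\beta,e))\ge\phi_\omega(\mathcal{A}(\alpha_0,\beta_0,e_0))$.

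The routine bookkeeping — keeping track of the $I_2$ blocks, the similarity transformation $\mathcal{R}(\theta)$, and the fact that a strictly positive scalar factor does not change the sign count of eigenvalues — is straightforward. The one genuinely delicate step is verifying that the scalar potential remainder $(1+\alpha-\beta)r_e(\theta)-c\,(1+\alpha_0-\beta_0)r_{e_0}(\theta)$ (with $c=\tfrac{\beta}{\beta_0}\tfrac{1\pm e_0}{1\pm e}$) has a sign over all $\theta\in[0,2\pi]$, and that the extremal case indeed occurs at $\cos\theta=\pm1$; this reduces the two displayed hypotheses (\ref{th4.5f1}) and (\ref{th4.5f2}) to sharp pointwise inequalities. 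I expect this monotonicity-of-a-ratio-of-$r_e$'s estimate to be the main obstacle, though it is elementary once one writes $r_e(\theta)=\frac{1}{1+e\cos\theta}$ and differentiates in $\cos\theta$. Everything else follows from Lemma \ref{lem4.3}, Proposition \ref{pro4.2}, and the already-established identities $\phi_\omega(\mathcal{A})=\phi_\omega(\bar{\mathcal{A}})=i_\omega(\gamma)$.
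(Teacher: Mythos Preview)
Your overall plan---start from \eqref{4.43}/\eqref{4.44} and show that the bracketed difference $\bar{\mathcal A}(\alpha-\beta,0,e)-c\,\bar{\mathcal A}(\alpha_0-\beta_0,0,e_0)$ has a sign---is exactly the paper's. The gap is in how you carry out that sign check.

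You split the bracket as
\[
(1-c)\Bigl(-\tfrac{d^2}{d\theta^2}-\tfrac{d}{d\theta}\Bigr)I_2\;+\;\bigl[(1+\alpha-\beta)r_e-c(1+\alpha_0-\beta_0)r_{e_0}\bigr]I_2
\]
and propose to conclude positivity by (a) invoking Lemma~\ref{lem4.3} on the ``first block'' and (b) checking the potential is pointwise $\ge 0$. Neither step works as written. The bare differential piece is \emph{not} $\ge 0$ on $\bar D_2(\omega,2\pi)$: Lemma~\ref{lem4.3} applies to $\mathcal A(\delta,0,e)$, which already contains the potential $(1+\delta)r_e$; without it the operator has negative spectrum. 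So knowing the potential is merely $\ge 0$ is not enough---you need it to be at least $(1-c)r_e$ to recover something of the form $(1-c)\bar{\mathcal A}(\delta,0,e)$ with $\delta\ge 0$. Relatedly, your claimed equivalence is false: the pointwise inequality $(1+\alpha-\beta)r_e\ge c(1+\alpha_0-\beta_0)r_{e_0}$, checked at $\cos\theta=\pm1$, reduces (the binding case is $\cos\theta=1$) to $\alpha-\tfrac{\beta}{\beta_0}\alpha_0\ge \tfrac{\beta}{\beta_0}-1$, which is strictly weaker than the hypothesis $\alpha-\tfrac{\beta}{\beta_0}\alpha_0>\tfrac{\beta(e-e_0)}{\beta_0(1+e)}$ in \eqref{th4.5f1} and in any case does not yield operator positivity.

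What the paper does instead is replace $r_{e_0}$ by $r_e$ via the elementary pointwise bound
\[
\frac{1+e_0}{1+e}\,r_{e_0}(\theta)\;\le\;r_e(\theta)\qquad(\text{equivalently }(1+e_0)(1+e\cos\theta)\le(1+e)(1+e_0\cos\theta),\ e_0\le e),
\]
so that $c(1+\alpha_0-\beta_0)r_{e_0}\le \tfrac{\beta}{\beta_0}(1+\alpha_0-\beta_0)r_e$. This collapses the bracket to a single operator $(1-c)\,\bar{\mathcal A}(\delta,0,e)$ with
\[
1+\delta=\frac{1+\alpha-\beta-\tfrac{\beta}{\beta_0}(1+\alpha_0-\beta_0)}{1-c},
\]
and now Lemma~\ref{lem4.3} applies directly. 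The condition $\delta>0$ simplifies precisely to the second inequality in \eqref{th4.5f1}. Part ii) is symmetric, using $\tfrac{1-e_0}{1-e}r_{e_0}(\theta)\ge r_e(\theta)$ in place of the bound above. Once you insert this $r_{e_0}\!\to\! r_e$ comparison, the rest of your outline goes through.
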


\begin{proof}
i)\ \ For $1+\alpha_{0}-\beta_{0}>0, \frac{\beta}{\beta_{0}}\frac{1+e_{0}}{1+e}\neq 1$, we have
\bea
&&\bar{\mathcal{A}}(\alpha-\beta,0,e)-\frac{\beta}{\beta_{0}}\frac{1+e_{0}}{1+e}\bar{\mathcal{A}}(\alpha_{0}-\beta_{0},0,e_{0})\nonumber\\
&& \geq(1-\frac{\beta}{\beta_{0}}\frac{1+e_{0}}{1+e})
(-\frac{d^2}{d\theta^2}I_{2}-\frac{d}{d\theta}I_{2}+\frac{1+\alpha-\beta-\frac{\beta}{\beta_{0}}(1+\alpha_{0}-\beta_{0})}
{1-\frac{\beta}{\beta_{0}}\frac{1+e_{0}}{1+e}}r_e(\theta)I_2 ).\nonumber
\eea

If $\frac{1+\alpha-\beta-\frac{\beta}{\beta_{0}}(1+\alpha_{0}-\beta_{0})}
{1-\frac{\beta}{\beta_{0}}\frac{1+e_{0}}{1+e}}>1$ and $\frac{\beta}{\beta_{0}}\frac{1+e_{0}}{1+e}< 1$, then from  \eqref{3.25}, we have
\bea
\bar{\mathcal{A}}(\alpha-\beta,0,e)-\frac{\beta}{\beta_{0}}\frac{1+e_{0}}{1+e}\bar{\mathcal{A}}(\alpha_{0}-\beta_{0},0,e_{0})>0
\ \ \mathrm{in}\ \ \bar{D}_{2}(\omega,2\pi), \forall \omega\in\mathbb{U}. \nonumber
\eea
Together with (\ref{4.43}), it yields
\bea
\bar{\mathcal{A}}(\alpha,\beta,e)>\frac{\beta}{\beta_{0}}\frac{1+e_{0}}{1+e}\bar{\mathcal{A}}(\alpha_{0},\beta_{0},e),
\ \ \mathrm{in}\ \ \bar{D}_{2}(\omega,2\pi), \forall \omega\in\mathbb{U},\nonumber
\eea

ii)\ \ For $1+\alpha_{0}-\beta_{0}>0$ and $\frac{\beta}{\beta_{0}}\frac{1+e_{0}}{1+e}\neq 1$, we have
\bea
&&\bar{\mathcal{A}}(\alpha-\beta,0,e)-\frac{\beta}{\beta_{0}}\frac{1-e_{0}}{1-e}\bar{\mathcal{A}}(\alpha_{0}-\beta_{0},0,e_{0})\nonumber\\
&& \leq (1-\frac{\beta}{\beta_{0}}\frac{1-e_{0}}{1-e})
(-\frac{d^2}{d\theta^2}I_{2}-\frac{d}{d\theta}I_{2}+\frac{1+\alpha-\beta-\frac{\beta}{\beta_{0}}(1+\alpha_{0}-\beta_{0})}
{1-\frac{\beta}{\beta_{0}}\frac{1-e_{0}}{1-e}}r_e(\theta)I_2).\nonumber
\eea

If $\frac{1+\alpha-\beta-\frac{\beta}{\beta_{0}}(1+\alpha_{0}-\beta_{0})}{1-\frac{\beta}{\beta_{0}}\frac{1-e_{0}}{1-e}}>1$
and $\frac{\beta}{\beta_{0}}\frac{1-e_{0}}{1-e}> 1$, then from\eqref{3.25}, we have
\bea
\bar{\mathcal{A}}(\alpha-\beta,0,e)-\frac{\beta}{\beta_{0}}\frac{1-e_{0}}{1-e}\bar{\mathcal{A}}(\alpha_{0}-\beta_{0},0,e_{0})<0
\ \ \mathrm{in}\ \ \bar{D}_{2}(\omega,2\pi), \forall \omega\in\mathbb{U}.\nonumber
\eea
Together with (\ref{4.44}), it yields
\bea
\bar{\mathcal{A}}(\alpha,\beta,e)<\frac{\beta}{\beta_{0}}\frac{1-e_{0}}{1-e}\bar{\mathcal{A}}(\alpha_{0},\beta_{0},e),
\ \ \mathrm{in}\ \ \bar{D}_{2}(\omega,2\pi), \forall \omega\in\mathbb{U}. \nonumber
\eea
\end{proof}

These theorems tell us that if we know that the $\omega$-Morse index of $\mathcal{A}(\alpha_{0},\beta_{0},e_{0})$
for some $(\alpha_{0},\beta_{0},e_{0})$ satisfies the corresponding conditions, then we can get the upper and lower
bounds of the $\omega$-Morse index of $\mathcal{A}(\alpha,\beta,e)$ for some $(\alpha,\beta,e)$ related to
$(\alpha_{0},\beta_{0},e_{0})$. In the case $e_{0}=0$, we can compute the fundamental solution
$\gamma_{\alpha_{0},\beta_{0},0}(2\pi)$ directly. Moreover we can also compute the indices
$\phi_{1}(\mathcal{A}(\alpha_{0},\beta_{0},0))$ and $\phi_{-1}(\mathcal{A}(\alpha_{0},\beta_{0},0))$ for
$\alpha_{0}\geq0,\beta_{0}\geq0$, then we can use them to estimate the Morse indices
$\phi_{1}(\mathcal{A}(\alpha,\beta,e))$ and $\phi_{-1}(\mathcal{A}(\alpha,\beta,e))$ for $e>0$.  In the next section,
we will compute the $-1$ and $1$-Morse indices of the operator $\mathcal{A}(\alpha_{0},\beta_{0},0)$.

%%%%%%%%%%%%%%%%%%%%%%%%%%%%%%%%%%%%%%%%%%%%%%%%%%%%%%%%%%%%%%%%%%%%%%%%%%%%%%%%%%%%%%%%%%%%%%%%%%%%

\subsection{Computation of $-1$ and $1$-Morse index of operator $\mathcal{A}(\alpha,\beta,0)$} %Subsection 3.3.

Simple computations show that
\bea
\sigma(J_{4}B_{\alpha,\beta,0}(\theta))=\left\{\pm\(\alpha-1+(\beta^2-4\alpha)^{\frac{1}{2}}\)^{\frac{1}{2}},
\pm\(\alpha-1-(\beta^2-4\alpha)^{\frac{1}{2}}\)^{\frac{1}{2}}\right\},\, \quad \mathrm{for}\;\;\alpha\geq0,\, \beta\geq0. \nonumber
\eea
Then we have
\\
i)\ \  $1\in\sigma(\gamma_{\alpha,\beta,0}(2\pi))$, if and only if
\bea
\(\alpha-1\pm(\beta^2-4\alpha)^{\frac{1}{2}}\)^{\frac{1}{2}}=k\sqrt{-1},\,\quad  k\in\mathbb{Z}, \nonumber
\eea
especially, we have
\bea
&&\beta=\alpha+1,\ \ \alpha\neq\frac{1}{2}, \ \ \nu_{1}(\gamma_{\alpha,\beta,0}(2\pi))=1,\nonumber \\
&&\beta=(\alpha^2+4\alpha)^{\frac{1}{2}},\ \ \alpha\neq\frac{1}{2},\ \ \nu_{1}(\gamma_{\alpha,\beta,0}(2\pi))=2,
\nonumber \\
&&\beta=\frac{3}{2},\ \ \alpha=\frac{1}{2},\ \ \nu_{1}(\gamma_{\alpha,\beta,0}(2\pi))=3.
\nonumber
\eea
Since $\mathcal{A}(\alpha,0,0)\geq 0$ in $\bar{D}(\omega,2\pi)$ for any $\omega\in\U$, so
$\phi_{1}(\mathcal{A}(\alpha,0,0))=0$ holds. Together with Proposition \ref{pro4.2}, it yields
\bea
&&\phi_{1}(\mathcal{A}(\alpha,\beta,0))=0, \ \ \mathrm{for}\ \
    (\alpha,\beta)\in \mathcal{D}_{1}=\{(x,y)|x\geq0,0\leq y\leq\min\{x+1,(x^2+4x)^{\frac{1}{2}}  \}. \nonumber
\eea
Moreover, we have the picture of the $1$-degenerate curves and the distribution of
$\phi_{1}(\mathcal{A}(\alpha,\beta,0))$ in Figure 1.
\begin{figure}[H]
  \centering
   \includegraphics[height=0.40\textwidth,width=0.68\textwidth]{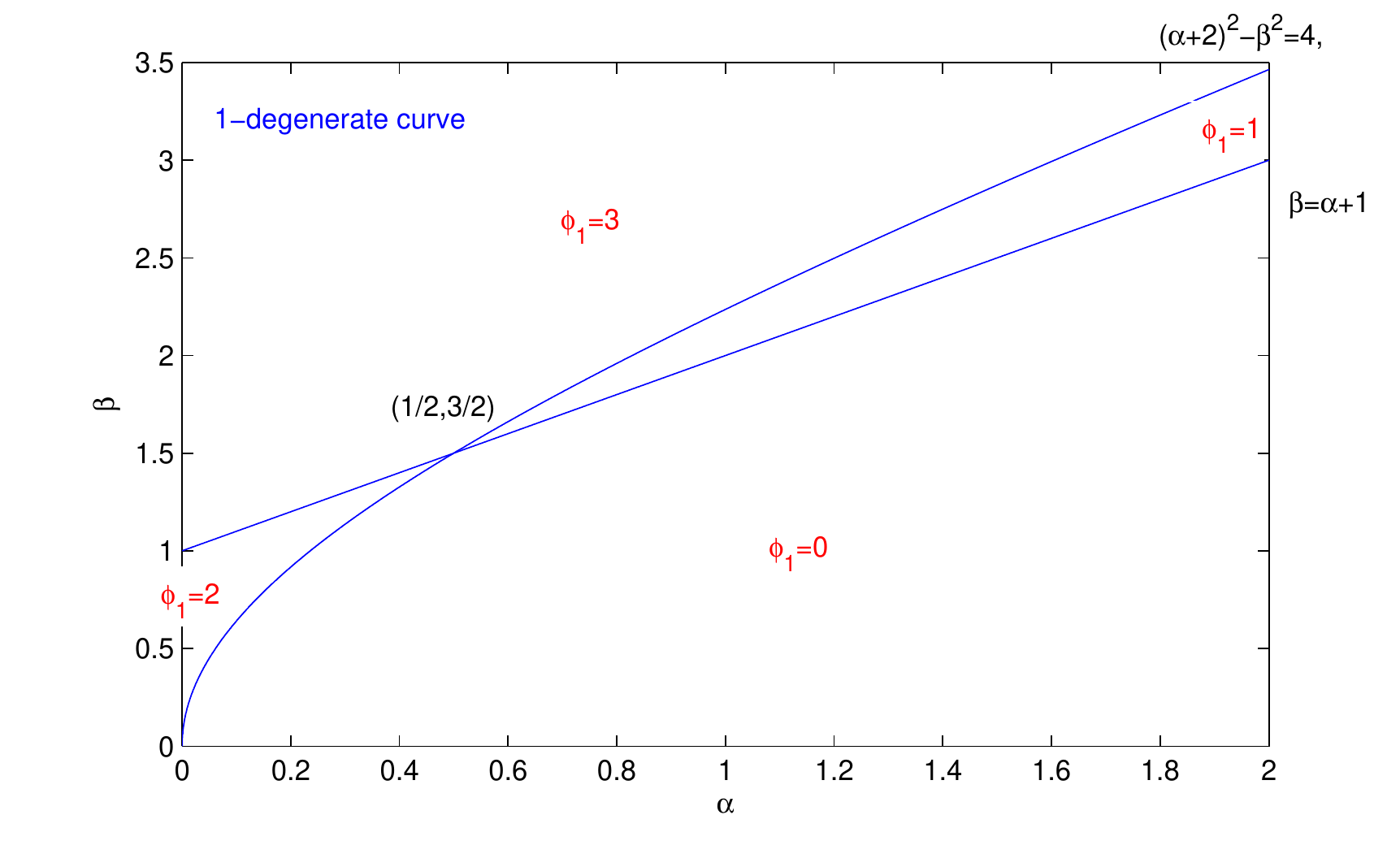}
     \caption{$1$-degenerate cures for $k=0$ and $k=\pm1$}.
\end{figure}
ii)\ \ $-1\in\sigma(\gamma_{\alpha,\beta,0}(2\pi))$, if and only if
\bea
\(\alpha-1\pm(\beta^2-4\alpha)^{\frac{1}{2}}\)^{\frac{1}{2}}=(\pm\frac{1}{2}+k)\sqrt{-1},\, k\in\mathbb{Z}, \nonumber
\eea
especially, we have
\bea
&&\beta=\(\alpha^2+\frac{5}{2}\alpha+\frac{9}{16}\)^{\frac{1}{2}},\, \nu_{-1}(\gamma_{\alpha,\beta,0}(2\pi))=2,\ \ \mathrm{for}\ \ k=0 \ \ \mathrm{or}\ \ k=-1,\nonumber \\
&&\beta=\(\alpha^2+\frac{13}{2}\alpha+\frac{25}{16}\)^{\frac{1}{2}},\, \nu_{-1}(\gamma_{\alpha,\beta,0}(2\pi))=2,\ \ \mathrm{for}\ \ k=1\ \ \mathrm{or}\ \ k=-2.\nonumber
\eea
Since $\mathcal{A}(\alpha,0,0)\geq 0$ in $\bar{D}(\omega,2\pi)$ for any $\omega\in\U$, so $\phi_{-1}(\mathcal{A}(\alpha,0,0))=0$.
Together with Proposition \ref{pro4.2}, it yields
\bea
&&\phi_{-1}(\mathcal{A}(\alpha,\beta,0))=0, \ \
  \mathrm{for}\ \ (\alpha,\beta)\in \mathcal{E}_{1}=\left\{(x,y)|x\geq0,0\leq y\leq\(x^2+\frac{5}{2}x+\frac{9}{16}\)^{\frac{1}{2}}\right\}. \nonumber
\eea
Moreover, we have the picture of the $-1$-degenerate curves and the distribution of
$\phi_{-1}(\mathcal{A}(\alpha,\beta,0))$ in Figure 2.
\begin{figure}[H]
  \centering
   \includegraphics[height=0.40\textwidth,width=0.68\textwidth]{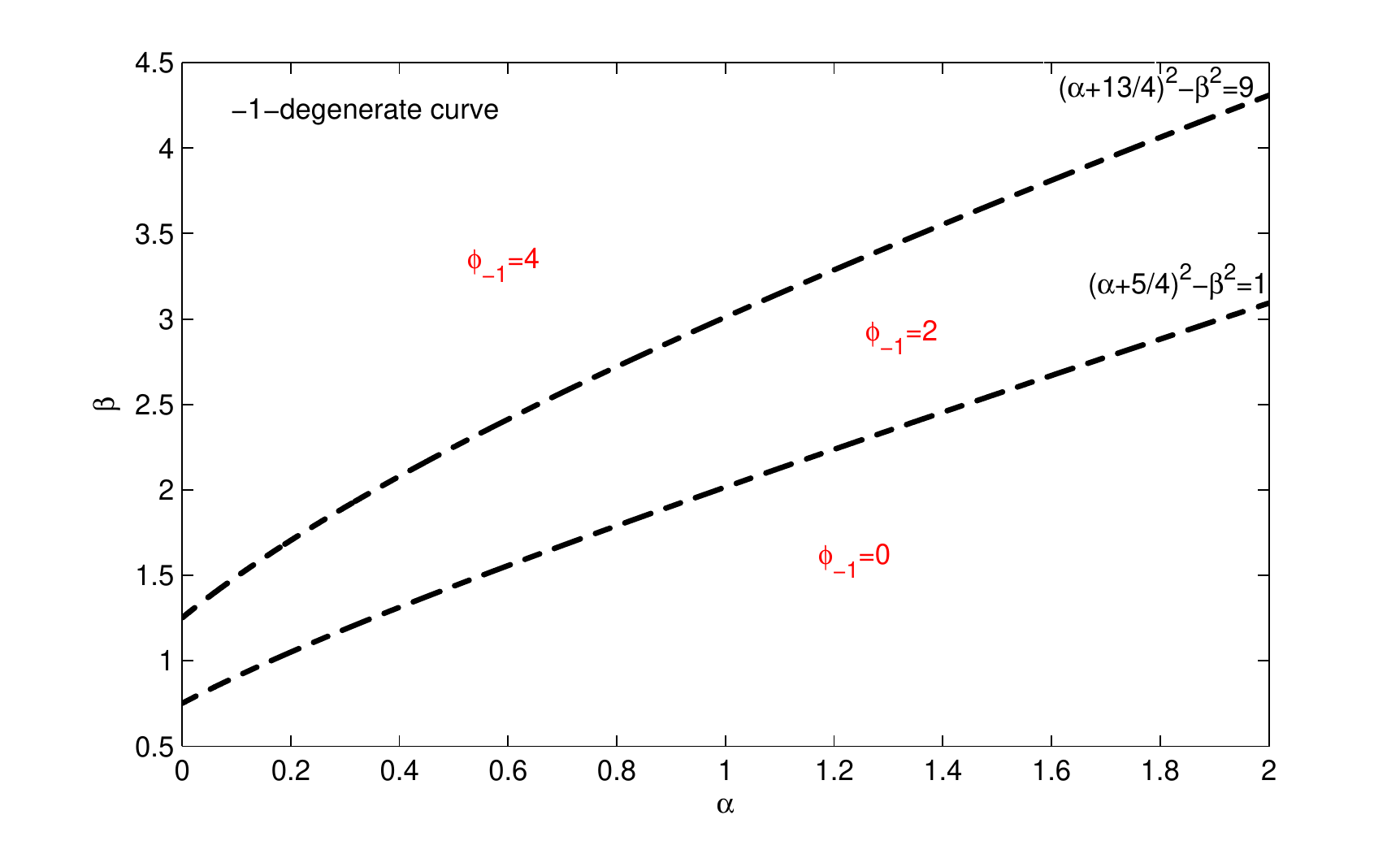}
     \caption{$-1$-degenerate cures for $k=0,-1$ or $k=1,-2$.}
\end{figure}
iii)\ \ $\gamma_{\alpha,\beta,0}(2\pi)$ is hyperbolic, if and only if
\bea
\beta<2\sqrt{\alpha},\ \ \mathrm{or}\ \ 2\sqrt{\alpha}\leq\beta<\alpha+1\;\;\mathrm{and}\;\;\alpha>1. \nonumber
\eea
Combining Figures 1 and 2 together yields the Figure 3.
\begin{figure}[H]
  \centering
   \includegraphics[height=0.40\textwidth,width=0.68\textwidth]{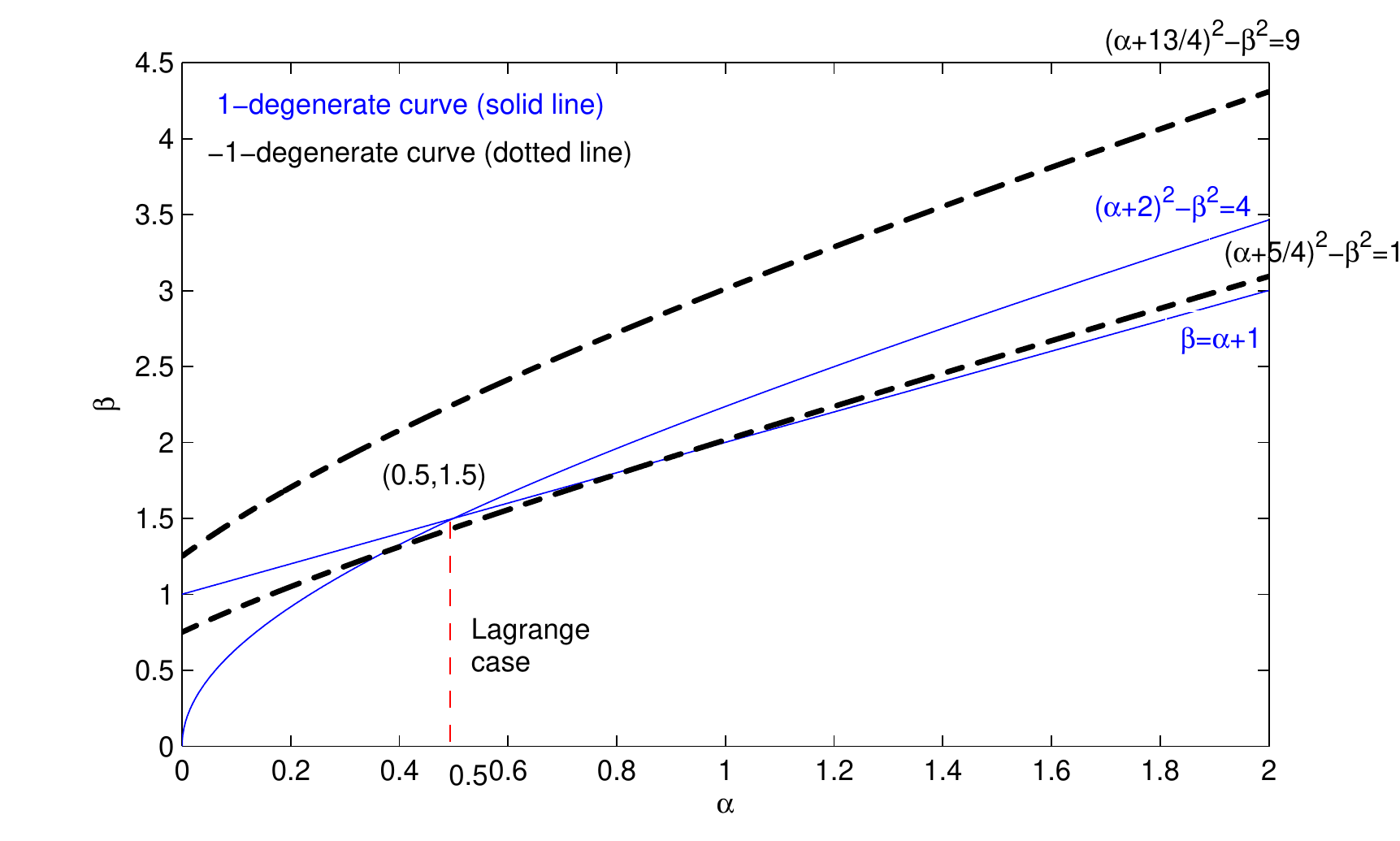}
     \caption{$1$-degenerate cures and $-1$-degenerate curves}.
\end{figure}

\begin{cor}\label{cor4.6}
The case of $\alpha=\frac{1}{2},\beta\in[0,\frac{3}{2}]$ and $e=0$ corresponds to the circular Lagrangian
solutions, and
\bea
&&\phi_{1}(\mathcal{A}(\frac{1}{2},\beta,0))=0, \ \qquad \forall\beta\in [0,\frac{3}{2}],\nonumber\\
&&\phi_{-1}(\mathcal{A}(\frac{1}{2},\beta,0))=2, \ \qquad \forall\beta\in(\frac{\sqrt{33}}{4},\frac{3}{2}]. \nonumber
\eea
\end{cor}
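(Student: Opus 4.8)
The statement is essentially a readout of the degenerate-curve picture of Subsection 3.3 (Figures 1--3) along the vertical line $\alpha=\frac{1}{2}$, so the plan is to evaluate the relevant degeneracy curves at $\alpha=\frac{1}{2}$ and then use the monotonicity of the $\omega$-Morse index in $\beta$ from Proposition \ref{pro4.2}. First, the identification of $(\alpha,\beta,e)=(\frac{1}{2},\beta,0)$ with $\beta\in[0,\frac{3}{2}]$ with the circular Lagrangian relative equilibria is exactly what is recalled in the Remark preceding Lemma \ref{lem4.3} (see also \cite{HLS,HS}): the regularized Hessian of the equilateral triangle central configuration has the form $R_{\alpha,\beta}$ with $\alpha=\frac{1}{2}$ and $\beta=\frac{\sqrt{9-\delta}}{2}$, which ranges over $[0,\frac{3}{2}]$ as $\delta\in[0,9]$, and $e=0$ is the circular case. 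It then remains to compute the two Morse indices.

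For the $1$-index, I would evaluate the region $\mathcal{D}_1$ of Subsection 3.3 at $x=\frac{1}{2}$: there $x+1=\frac{3}{2}$ and $(x^2+4x)^{1/2}=(\frac{1}{4}+2)^{1/2}=\frac{3}{2}$ coincide, so $\{\frac{1}{2}\}\times[0,\frac{3}{2}]\subset\mathcal{D}_1$, and hence $\phi_1(\mathcal{A}(\frac{1}{2},\beta,0))=0$ for every $\beta\in[0,\frac{3}{2}]$. Only the endpoint $\beta=\frac{3}{2}$, where $\nu_1=3$, needs a word: by Theorem \ref{thm4.3} one has $\phi_1=\nu_1=0$ on $[0,\frac{3}{2})$ (using also $\mathcal{A}(\frac{1}{2},0,0)>0$ at $\beta=0$), so, the eigenvalues of $\mathcal{A}(\frac{1}{2},\beta,0)$ depending continuously on $\beta$, the three eigenvalues that vanish at $\beta=\frac{3}{2}$ reach $0$ from the positive side and no other eigenvalue is negative there, whence $\phi_1(\mathcal{A}(\frac{1}{2},\frac{3}{2},0))=0$.

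For the $-1$-index, I would track $\phi_{-1}$ as $\beta$ increases through $(0,\frac{3}{2}]$. Evaluating the region $\mathcal{E}_1$ at $x=\frac{1}{2}$ gives the boundary $(x^2+\frac{5}{2}x+\frac{9}{16})^{1/2}=(\frac{33}{16})^{1/2}=\frac{\sqrt{33}}{4}$, so $\phi_{-1}(\mathcal{A}(\frac{1}{2},\beta,0))=0$ for $\beta\in[0,\frac{\sqrt{33}}{4}]$, while the $k=0$ (equivalently $k=-1$) $-1$-degeneracy curve $\beta=(x^2+\frac{5}{2}x+\frac{9}{16})^{1/2}$ gives $\nu_{-1}=2$ at $\beta=\frac{\sqrt{33}}{4}$. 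The next $-1$-degeneracy curve $\beta=(x^2+\frac{13}{2}x+\frac{25}{16})^{1/2}$ meets the line $x=\frac{1}{2}$ only at $\beta=(\frac{81}{16})^{1/2}=\frac{9}{4}>\frac{3}{2}$, so $\nu_{-1}(\mathcal{A}(\frac{1}{2},\beta,0))=0$ for all $\beta\in(\frac{\sqrt{33}}{4},\frac{3}{2}]$ and $\phi_{-1}(\mathcal{A}(\frac{1}{2},\beta,0))$ is constant on this interval. To identify the constant, I would invoke the strict monotonicity established in the proof of Proposition \ref{pro4.2}: by \eqref{3.23} one has $\phi_{-1}(\mathcal{A}(\frac{1}{2},\beta,0))=\phi_{-1}(\tilde{\mathcal{A}}(\frac{1}{2},\beta,0))$, and since $\mathcal{A}(\frac{1}{2},0,0)>0$ by \eqref{3.25}, the family $\tilde{\mathcal{A}}(\frac{1}{2},\beta,0)$ is strictly decreasing in $\beta$. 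Hence the two eigenvalues of $\tilde{\mathcal{A}}(\frac{1}{2},\beta,0)$ that vanish at $\beta=\frac{\sqrt{33}}{4}$ become strictly negative for $\beta>\frac{\sqrt{33}}{4}$, so $\phi_{-1}\ge 2$ there, while, $\phi_{-1}$ being $0$ just below $\frac{\sqrt{33}}{4}$ and exactly two eigenvalues vanishing at that value, in fact $\phi_{-1}(\mathcal{A}(\frac{1}{2},\beta,0))=2$ on $(\frac{\sqrt{33}}{4},\frac{3}{2}]$.

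The computations are all elementary arithmetic with the explicit degeneracy curves of Subsection 3.3; the only points demanding genuine care are the jump count across $\beta=\frac{\sqrt{33}}{4}$ --- that $\phi_{-1}$ increases there by exactly $2$, not less (forced by the strict monotonicity of $\tilde{\mathcal{A}}$) and not more (forced by $\nu_{-1}=2$ at that value and the absence of any further $-1$-degeneracy before $\beta=\frac{3}{2}$) --- and the analogous semicontinuity argument pinning down $\phi_1$ at the degenerate endpoint $\beta=\frac{3}{2}$.
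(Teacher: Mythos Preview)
Your proposal is correct and follows essentially the same approach as the paper: the corollary is stated without a separate proof, as a direct readout of the degenerate-curve computations in Subsection~3.3 along the line $\alpha=\tfrac{1}{2}$, and your argument is precisely a careful elaboration of that readout, using Proposition~\ref{pro4.2} for monotonicity and the explicit crossing values $\tfrac{3}{2}$, $\tfrac{\sqrt{33}}{4}$, $\tfrac{9}{4}$. Your added care at the boundary points ($\phi_1$ at $\beta=\tfrac{3}{2}$ and the exact jump of $\phi_{-1}$ across $\beta=\tfrac{\sqrt{33}}{4}$) is a welcome supplement to what the paper leaves implicit in the figures.
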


\begin{rem}\label{rem4.7}
In fact, in \cite{HLS,HO}, much stronger results were proved.
\bea
&&\phi_{1}(\mathcal{A}(\frac{1}{2},\beta,e))=0,\ \ \ \ \forall(\beta,e)\in [0,\frac{3}{2}]\times[0,1), \nonumber\\
&&\phi_{\omega}(\mathcal{A}(\frac{1}{2},\beta,e))=0, \nu_{\omega}(\mathcal{A}(\frac{1}{2},\beta,e))=0,\ \
       \ \forall(\beta,e)\in \mathfrak{U}_{1},\omega\in\mathbb{U},\nonumber\\
&&\phi_{-1}(\mathcal{A}(\frac{1}{2},\beta,e))=2, \nu_{-1}(\mathcal{A}(\frac{1}{2},\beta,e))=0,
\ \ \forall \beta\in (\beta_{m}(e),\frac{3}{2}],e\in[0,1),\nonumber
\eea
where $\mathfrak{U}_{1}=\{(\beta,e)| 0\leq\beta<\min\{\frac{\sqrt{9-x_{0}}}{2},\frac{(1+e)\sqrt{9-x_{0}}}{2(1+3e-2y_{0})}\}$,
$e\in[0,1)\}$, $(x_{0},y_{0})=(1.5,0.108)$, and $\beta_{m}(e)>0$ is an analytic curve in $e$.
\end{rem}

%\begin{cor}\label{cor4.8}
%Let $\alpha_{0}=\frac{1}{2},\beta_{0}\in(\frac{\sqrt{33}}{4}, \frac{3}{2}],e_{0}=0$, then Theorem \ref{thm4.5} and
%Corollary \ref{cor4.6} imply that if $(\alpha,\beta,e)$ satisfies
%\bea
%e\geq0,\ \ \  \frac{\beta}{\beta_{0}}\frac{1-e_{0}}{1-e}> 1,\ \ \   \alpha<\frac{\beta(1-3e)}{2\beta_{0}(1-e)},\nonumber
%\eea
%then
%\bea
%\phi_{-1}(\mathcal{A}(\alpha,\beta,e))\geq2. \nonumber
%\eea
%Especially, for
%$e\in[0,\frac{1}{3})$, $\beta\in (\frac{\sqrt{33}}{4}(1-e),+\infty)$, and
%$\alpha\in(0,\frac{\beta(2-6e)}{\sqrt{33}(1-e)})$, we get
%\bea
%\phi_{-1}(\mathcal{A}(\alpha,\beta,e))\geq2. \nonumber
%\eea
%\end{cor}

\begin{cor}\label{cor4.9}
Let $\alpha_{0}=\frac{1}{2}$, $\beta_{0}\in(0,\frac{3}{2}]$ and $e_{0}\in[0,1)$, then Remark \ref{rem4.7} and
Theorem \ref{thm4.5} imply that if $(\alpha,\beta,e)$ satisfies
\bea
e_{0}\leq e,\quad    \frac{\beta}{\beta_{0}}\frac{1+e_{0}}{1+e}< 1,\quad
\alpha>\frac{\beta}{\beta_{0}}\frac{1+3e-2e_{0}}{2+2e},
\nonumber
\eea
then
\bea
\mathcal{A}(\alpha,\beta,e)>0\ \ \mathrm{in}\ \ \bar{D}_{1}(1,2\pi).\nonumber
\eea
If we choose $\beta_{0}=\frac{3}{2}$ and $e_{0}\in [0,1)$, then
\bea
\mathcal{A}(\alpha,\beta,e)>0\ \ \mathrm{in}\ \ \bar{D}_{1}(1,2\pi), \ \ \mathrm{for}\ \ e\in[e_{0},1), \  \ \beta\in [0, \frac{3}{2}\frac{1+e}{1+e_{0}}), \ \ \alpha\in(\frac{1+3e-2e_{0}}{3+3e}\beta,+\infty). \nonumber
\eea
Moreover, if we take $e=e_{0}$, then
\bea
\mathcal{A}(\alpha,\beta,e)>0\ \ \mathrm{in}\ \ \bar{D}_{1}(1,2\pi), \  \ \mathrm{for}\ \ \beta\in [0, \frac{3}{2}), \ \ \alpha\in(\frac{1}{3}\beta,+\infty). \nonumber
\eea
\end{cor}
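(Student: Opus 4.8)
This corollary is essentially a bookkeeping consequence of Theorem~\ref{thm4.5}(i) and the first line of Remark~\ref{rem4.7}: no new idea is needed, only an elementary rearrangement of inequalities, and the only point requiring a moment's thought is the borderline value $\beta_{0}=\frac{3}{2}$. The plan is as follows.

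\emph{Step 1: apply Theorem~\ref{thm4.5}(i) with $\alpha_{0}=\frac{1}{2}$.} I would first fix $\beta_{0}\in(0,\frac{3}{2})$ and $e_{0}\in[0,1)$, so that $1+\alpha_{0}-\beta_{0}=\frac{3}{2}-\beta_{0}>0$ and $\alpha_{0},\beta_{0}>0$. Among the hypotheses on $(\alpha,\beta,e)$ in the corollary are $0\le e_{0}\le e$ and $\beta\ge 0$, and they also force $\alpha>0$, since $e\ge e_{0}\ge 0$ gives $\frac{1+3e-2e_{0}}{2+2e}>0$ and hence $\alpha>\frac{\beta}{\beta_{0}}\cdot\frac{1+3e-2e_{0}}{2+2e}\ge 0$ (with $\alpha>0$ directly when $\beta=0$). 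It then remains to recognize the two conditions in \eqref{th4.5f1}. The first, $\frac{\beta}{\beta_{0}}\frac{1+e_{0}}{1+e}<1$, is assumed verbatim. For the second, setting $\alpha_{0}=\frac{1}{2}$ it reads $\frac{\beta(e-e_{0})}{\beta_{0}(1+e)}<\alpha-\frac{\beta}{2\beta_{0}}$; moving $\frac{\beta}{2\beta_{0}}$ across and combining the two left-hand terms over the common denominator $2\beta_{0}(1+e)$ rewrites it as $\alpha>\frac{\beta}{\beta_{0}}\cdot\frac{2(e-e_{0})+(1+e)}{2(1+e)}=\frac{\beta}{\beta_{0}}\cdot\frac{1+3e-2e_{0}}{2+2e}$, which is precisely the third hypothesis. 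So Theorem~\ref{thm4.5}(i) is applicable.

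\emph{Step 2: pass from the comparison to positivity.} Taking $\omega=1$, Theorem~\ref{thm4.5}(i) then shows that $\mathcal{A}(\alpha,\beta,e)-\frac{\beta}{\beta_{0}}\frac{1+e_{0}}{1+e}\mathcal{A}(\frac{1}{2},\beta_{0},e_{0})$ is positive definite on $\bar{D}_{2}(1,2\pi)$. On the other hand, the first line of Remark~\ref{rem4.7} gives $\phi_{1}(\mathcal{A}(\frac{1}{2},\beta_{0},e_{0}))=0$, i.e. $\mathcal{A}(\frac{1}{2},\beta_{0},e_{0})\ge 0$, and the scalar $\frac{\beta}{\beta_{0}}\frac{1+e_{0}}{1+e}$ is $\ge 0$; adding these two operators yields $\mathcal{A}(\alpha,\beta,e)>0$. (If $\beta=0$, positivity is immediate from \eqref{3.25}, since then $\alpha>0$.) This proves the first assertion whenever $\beta_{0}\in(0,\frac{3}{2})$.

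\emph{Step 3: the endpoint $\beta_{0}=\frac{3}{2}$ and the two specializations.} For $\beta_{0}=\frac{3}{2}$, which is the value used in the last two displays, I would argue by continuity: the inequalities $\frac{\beta}{\beta_{0}}\frac{1+e_{0}}{1+e}<1$ and $\alpha>\frac{\beta}{\beta_{0}}\frac{1+3e-2e_{0}}{2+2e}$ stated there are strict, and for a fixed triple $(\alpha,\beta,e)$ their left-hand sides are continuous and monotone in $\beta_{0}$, so if they hold at $\beta_{0}=\frac{3}{2}$ they persist for some $\beta_{0}'\in(0,\frac{3}{2})$ close to $\frac{3}{2}$, to which Steps 1--2 apply (and for $\beta=0$ no limiting is needed). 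Alternatively, one can rerun the proof of Theorem~\ref{thm4.5}(i) directly: when $\alpha_{0}-\beta_{0}=-1$ the operator $\bar{\mathcal A}(\alpha_{0}-\beta_{0},0,e_{0})$ carries no $r_{e_{0}}$-term, so \eqref{4.43} holds as stated and the argument is unaffected. Finally, substituting $\beta_{0}=\frac{3}{2}$ turns $\frac{\beta}{\beta_{0}}\frac{1+e_{0}}{1+e}<1$ into $\beta<\frac{3}{2}\,\frac{1+e}{1+e_{0}}$ and $\frac{\beta}{\beta_{0}}\frac{1+3e-2e_{0}}{2+2e}$ into $\frac{1+3e-2e_{0}}{3+3e}\beta$, which is the second display; putting further $e=e_{0}$ collapses these to $\beta<\frac{3}{2}$ and $\alpha>\frac{1}{3}\beta$, the third display. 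The single place that demands any care is this endpoint $\beta_{0}=\frac{3}{2}$; the rest is just the algebraic identification of the hypotheses with those of Theorem~\ref{thm4.5}.
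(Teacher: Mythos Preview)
Your proof is correct and follows exactly the approach the paper intends: the corollary is stated without an explicit proof, its text simply asserting that ``Remark~\ref{rem4.7} and Theorem~\ref{thm4.5} imply'' the conclusion, and your Steps~1--2 carry out precisely that implication. Your Step~3 is in fact more careful than the paper, which includes $\beta_{0}=\frac{3}{2}$ in the hypothesis without remarking that Theorem~\ref{thm4.5} formally requires $1+\alpha_{0}-\beta_{0}>0$; both of your workarounds (the continuity argument and the direct observation that the $r_{e_{0}}$-term in $\bar{\mathcal A}(\alpha_{0}-\beta_{0},0,e_{0})$ vanishes when $\alpha_{0}-\beta_{0}=-1$) are valid.
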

\begin{cor}\label{cor4.10}
Choose $\alpha_{0}=\frac{1}{2}, (\beta_{0},e_{0})\in\mathfrak{U}_{1}$. Then Remark \ref{rem4.7} and Theorem \ref{thm4.5} imply that
if $(\alpha,\beta,e)$ satisfies
\bea
e_{0}\leq e,\quad  \frac{\beta}{\beta_{0}}\frac{1+e_{0}}{1+e}< 1,\quad
      \alpha>\frac{\beta}{\beta_{0}}\frac{1+3e-2e_{0}}{2+2e},
\nonumber
\eea
then
\bea
\mathcal{A}(\alpha,\beta,e)>0\ \ \mathrm{in}\ \ \bar{D}_{1}(\omega,2\pi)\ \ \ \forall \omega\in\mathbb{U}.\nonumber
\eea
where $\mathfrak{U}_{1}=\{(\beta_{0},e_{0})|\ \ 0<\beta_{0}<\min\{\frac{\sqrt{9-x_{0}}}{2}$,
$\frac{(1+e_{0})\sqrt{9-x_{0}}}{2(1+3e_{0}-2y_{0})}, e_{0}\in[0,1)\},(x_{0},y_{0})=(1.5,0.108)$.
%and $\beta_{m}(e)>0$ is an analytic curve depending on $e$.
Simple computations show that
$(0,0.7237]\times[0,1)\subset(0,\frac{\sqrt{9-x_{0}}}{4-2y_{0}})\times[0,1)\subset\mathfrak{U}_{1}$. Especially if we let
$e=e_{0}$, then
\bea
\mathcal{A}(\alpha,\beta,e)>0\ \ \mathrm{in}\ \ \bar{D}_{1}(\omega,2\pi)\ \ \ \ \forall \omega\in\mathbb{U},
    \;(\beta_{0},e)\in(0,0.7237]\times[0,1),\;
\beta\in [0, \beta_{0}),\;\alpha\in(\frac{\beta}{2\beta_{0}},+\infty). \nonumber
\eea
\end{cor}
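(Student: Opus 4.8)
\noindent The plan is to treat Corollary \ref{cor4.10} as a propagation-of-positivity statement. The genuinely hard input, namely that $\mathcal{A}(\frac{1}{2},\beta_0,e_0)$ is positive definite (its $\omega$-Morse index and $\omega$-nullity both vanish) for \emph{every} $\omega\in\mathbb{U}$ whenever $(\beta_0,e_0)\in\mathfrak{U}_1$, is precisely the second assertion of Remark \ref{rem4.7}; what remains is to push this anchor forward with the comparison Theorem \ref{thm4.5}~i), specialized to $\alpha_0=\frac{1}{2}$, and then to read off the admissible region of triples $(\alpha,\beta,e)$.

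Concretely, I would invoke Theorem \ref{thm4.5}~i) with $(\alpha_0,\beta_0,e_0)=(\frac{1}{2},\beta_0,e_0)$ and verify its hypotheses. The chain $0\le e_0\le e$ is assumed; $\alpha_0=\frac{1}{2}>0$ and $\beta_0>0$ are clear; $1+\alpha_0-\beta_0=\frac{3}{2}-\beta_0>0$ because $(\beta_0,e_0)\in\mathfrak{U}_1$ forces $\beta_0<\frac{1}{2}\sqrt{9-x_0}=\frac{1}{2}\sqrt{7.5}<\frac{3}{2}$; and $\alpha>0$ (automatic when $\beta>0$, since $e\ge e_0$ gives $\frac{1+3e-2e_0}{2+2e}\ge\frac{1}{2}$ and hence $\alpha>0$ from the third displayed hypothesis, and $\alpha>0$ is imposed when $\beta=0$). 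Of the two inequalities in \eqref{th4.5f1}, the first, $\frac{\beta}{\beta_0}\frac{1+e_0}{1+e}<1$, is the second displayed hypothesis verbatim, while the second, $\frac{\beta(e-e_0)}{\beta_0(1+e)}<\alpha-\frac{\beta}{\beta_0}\alpha_0=\alpha-\frac{\beta}{2\beta_0}$, becomes --- using $\frac{1}{2}+\frac{e-e_0}{1+e}=\frac{1+3e-2e_0}{2+2e}$ --- exactly the third displayed hypothesis $\alpha>\frac{\beta}{\beta_0}\frac{1+3e-2e_0}{2+2e}$. Theorem \ref{thm4.5}~i) then delivers the strict operator inequality $\frac{\beta}{\beta_0}\frac{1+e_0}{1+e}\,\mathcal{A}(\frac{1}{2},\beta_0,e_0)<\mathcal{A}(\alpha,\beta,e)$ in $\bar{D}_{1}(\omega,2\pi)$ for all $\omega\in\mathbb{U}$; since the prefactor is nonnegative and $\mathcal{A}(\frac{1}{2},\beta_0,e_0)\ge0$ by Remark \ref{rem4.7}, this forces $\mathcal{A}(\alpha,\beta,e)>0$, which is the first conclusion.

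The rest is elementary. For the inclusion $(0,0.7237]\times[0,1)\subset(0,\frac{\sqrt{9-x_0}}{4-2y_0})\times[0,1)\subset\mathfrak{U}_1$ I would note that, with $(x_0,y_0)=(1.5,0.108)$, the two quantities bounding $\mathfrak{U}_1$ are the constant $\frac{1}{2}\sqrt{7.5}\approx1.369$ and $g(e_0):=\frac{(1+e_0)\sqrt{7.5}}{2(1+3e_0-2y_0)}=\frac{\sqrt{7.5}}{2}\cdot\frac{1+e_0}{0.784+3e_0}$; a one-line derivative computation shows $g$ is strictly decreasing on $[0,1)$, so $\inf_{e_0\in[0,1)}g(e_0)=g(1^-)=\frac{\sqrt{7.5}}{2\cdot 3.784}=\frac{\sqrt{9-x_0}}{4-2y_0}\approx0.7237<\frac{1}{2}\sqrt{7.5}$. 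Hence every $\beta_0<\frac{\sqrt{9-x_0}}{4-2y_0}$ satisfies $\beta_0<\min\{\frac{1}{2}\sqrt{9-x_0},\,g(e_0)\}$ for all $e_0\in[0,1)$, i.e. $(\beta_0,e_0)\in\mathfrak{U}_1$, and $0.7237\le\frac{\sqrt{9-x_0}}{4-2y_0}$ gives the displayed numerical form. Finally, putting $e=e_0$ collapses the $(e-e_0)$ terms: $\frac{\beta}{\beta_0}\frac{1+e_0}{1+e}<1$ becomes $\beta<\beta_0$ and $\frac{1+3e-2e_0}{2+2e}=\frac{1}{2}$, so the third condition becomes $\alpha>\frac{\beta}{2\beta_0}$, yielding $\mathcal{A}(\alpha,\beta,e)>0$ in $\bar{D}_{1}(\omega,2\pi)$ for all $\omega\in\mathbb{U}$ whenever $(\beta_0,e)\in(0,0.7237]\times[0,1)$, $\beta\in[0,\beta_0)$ and $\alpha>\frac{\beta}{2\beta_0}$.

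I do not expect a genuine obstacle: all the analytic content --- the positivity of $\mathcal{A}(\frac{1}{2},\beta_0,e_0)$ on $\mathfrak{U}_1$ for every $\omega$, and the operator-comparison mechanism itself --- is already established in Remark \ref{rem4.7} and Theorem \ref{thm4.5}. The only thing requiring care is the bookkeeping that turns \eqref{th4.5f1} with $\alpha_0=\frac{1}{2}$ into precisely the three displayed hypotheses, together with the monotonicity of $g$ and the arithmetic $4-2y_0=3.784$ behind the constant $0.7237$.
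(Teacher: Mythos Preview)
Your proposal is correct and follows precisely the approach the paper intends: the corollary is stated without a separate proof because it is a direct specialization of Theorem~\ref{thm4.5}~i) with $\alpha_0=\frac{1}{2}$, anchored at the positivity of $\mathcal{A}(\frac{1}{2},\beta_0,e_0)$ on $\mathfrak{U}_1$ supplied by Remark~\ref{rem4.7}. Your verification that \eqref{th4.5f1} with $\alpha_0=\frac{1}{2}$ is equivalent to the three displayed hypotheses, together with the monotonicity of $g(e_0)$ and the arithmetic behind $0.7237$, fills in exactly the ``simple computations'' the paper alludes to.
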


\section{The Stability of $(1+n)$-gon ERE}%Section 4.

In this section, we will estimate the $\pm 1$-Morse indices in Subsections 1 and 2 respectively, and give
the proof of Theorem \ref{th1.1}.

\subsection{Estimate $1$-Morse index of $(1+n)$-gon ERE}\label{sec4.5}%Subsectionn 4.1.

\textbf{1)}\ \ For $l=1$, from (\ref{3.7}), (\ref{3.9}) we have
\bea \mathcal{A}(R_1,e)\geq
\mathcal{A}(\check{R}_1,e), \ \
T^{t}\mathcal{A}(\check{R}_1,e)T=\mathcal{A}(\check{R}^{+}_1,e)\oplus\mathcal{A}(\check{R}^{-}_1,e),\nonumber
\eea
and $\mathcal{A}(\check{R}^{-}_1,e)$ is similar to $\mathcal{A}(\check{R}^{+}_1,e)=\mathcal{A}(\alpha_1, \beta_{1}, e)$,
where $\alpha_1=\frac{1}{\lambda}(\check{d}_{n}+\frac{m}{2})$, $\beta_1=\frac{3\sqrt{m(m+n)}}{2\lambda}$
with $\lambda=m+\frac{1}{2}\sigma_{n}$, $\check{d}_{n}=\min\{2P_{1},\frac{n}{2}\},\hat{d}_{n}=\max\{2P_{1},\frac{n}{2}\}$,
$\sigma_{n}=\frac{1}{2}\sum_{i=1}^{n-1}\csc \frac{\pi i}{n}$ and $P_{1}=\sum_{j=1}^{n-1}\frac{1-\cos^2 \theta_{j}}{2d_{nj}^3}$.

If
\bea
\frac{\check{d}_{n}+\frac{m}{2}}{m+\frac{\sigma_{n}}{2}}\geq\frac{1}{2},\ \
\frac{\frac{3}{2}\sqrt{m(m+n)}}{m+\frac{\sigma_{n}}{2}}< \frac{\check{d}_{n}+\frac{m}{2}}{m+\frac{\sigma_{n}}{2}}+1, \label{4.74}
\eea
then by Theorem \ref{thm4.3}, we have $\mathcal{A}(\alpha_1,\beta_1,e)>0$ in $\bar{D}_{1}(1,2\pi)$, for all $e\in [0,1)$.
Inequalities in (\ref{4.74}) are implied by the following inequalities
\bea
4\check{d}_{n}\geq \sigma_{n},\ \ \frac{4}{3}\check{d}_{n}+\frac{2}{3}\sigma_{n}> n. \label{4.75}
\eea
By using the Matlab, we can compute $\sigma_{n}$ and $\check{d}_{n}$ directly for $4\leq n\leq 27$. We list
the numerical results below
\begin{center}
\begin{tabular}{|c|c|c|c|c|c|c|c|c|c|c|}
  \hline
  % after \\: \hline or \cline{col1-col2} \cline{col3-col4} ...
  $\sigma_{4}\sim \sigma_{11} $ & 1.9142 & 2.7528 & 3.6547 & 4.6095 & 5.6097 & 6.6497 & 7.7249 & 8.8319  \\
  $\sigma_{12}\sim \sigma_{19}$ & 9.9679 & 11.1304 & 12.3173 & 13.5269 & 14.7578 & 16.0085 & 17.2780 & 18.5652\\
  $\sigma_{20}\sim \sigma_{27}$ & 19.8690 & 21.1889 & 22.5238 & 23.8732 & 25.2365 & 26.6130 & 28.0023 & 29.4038 \\
  \hline
\end{tabular}
\end{center}
\begin{center}
\begin{tabular}{|c|c|c|c|c|c|c|c|c|c|c|}
  \hline
  % after \\: \hline or \cline{col1-col2} \cline{col3-col4} ...
  $\check{d}_{4}\sim \check{d}_{11} $ & 0.7072 & 1.2140 & 1.7886 & 2.4188 & 3.0960 & 3.8140 & 4.5680 & 5.3544  \\
  $\check{d}_{12}\sim \check{d}_{19}$ & 6 & 6.5 & 7 & 7.5 & 8 & 8.5 & 9 & 9.5\\
  $\check{d}_{20}\sim \check{d}_{27}$ & 10 & 10.5 & 11 & 11.5 & 12 & 12.5 & 13 & 13.5 \\
  \hline
\end{tabular}
\end{center}

We find that they satisfy the inequalities (\ref{4.75}) for $9\leq n \leq 27$. Hence
$\mathcal{A}(\alpha_1,\beta_1,e)>0$ in $\bar{D}_{1}(1,2\pi)$, this implies that
$\mathcal{A}(R_1,e)>0$ in $\bar{D}_{2}(1,2\pi)$ holds and we get the following lemma.

\begin{lem}\label{lem4.11}
For $9\leq n \leq 27$, the equality $\mathcal{A}(R_1,e)>0$ in $\bar{D}_{2}(1,2\pi)$ holds for all $(m ,e)\in [0,+\infty)\times [0,1)$.
\end{lem}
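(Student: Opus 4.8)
The plan is to feed the operator comparisons already established in (\ref{3.7})--(\ref{3.9}) into the positivity criterion of Theorem \ref{thm4.3}. First I would use $\mathcal{A}(R_1,e)\ge\mathcal{A}(\check{R}_1,e)$ from (\ref{3.7}), so that it suffices to prove $\mathcal{A}(\check{R}_1,e)>0$ in $\bar{D}_{2}(1,2\pi)$. Conjugating by $T$ as in (\ref{3.9}) and using that $\mathcal{A}(\check{R}^-_1,e)$ is similar to $\mathcal{A}(\check{R}^+_1,e)=\mathcal{A}(\alpha_1,\beta_1,e)$, this reduces to showing $\mathcal{A}(\alpha_1,\beta_1,e)>0$ in $\bar{D}_{2}(1,2\pi)$ for every $e\in[0,1)$, where $\alpha_1=\lambda^{-1}(\check{d}_n+\tfrac{m}{2})$, $\beta_1=\tfrac{3}{2}\lambda^{-1}\sqrt{m(m+n)}$ and $\lambda=m+\tfrac{\sigma_n}{2}$.

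Next I would invoke Theorem \ref{thm4.3}, which gives $\phi_{1}(\mathcal{A}(\alpha_1,\beta_1,e))=\nu_{1}(\mathcal{A}(\alpha_1,\beta_1,e))=0$, and hence positive definiteness of $\mathcal{A}(\alpha_1,\beta_1,e)$ in $\bar{D}_{2}(1,2\pi)$, as soon as $\alpha_1\ge\tfrac{1}{2}$ and $0<\beta_1<\alpha_1+1$, i.e. the two inequalities (\ref{4.74}). The essential point is to verify (\ref{4.74}) \emph{uniformly} in $m\in[0,+\infty)$. Clearing the positive denominator $m+\tfrac{\sigma_n}{2}$, the first inequality in (\ref{4.74}) becomes the $m$-free condition $4\check{d}_n\ge\sigma_n$; for the second I would use the elementary estimate $\sqrt{m(m+n)}=\sqrt{(m+\tfrac{n}{2})^2-\tfrac{n^2}{4}}<m+\tfrac{n}{2}$, valid for all $m\ge0$, which turns (\ref{4.74}) into the stronger, $m$-free inequality $\tfrac{4}{3}\check{d}_n+\tfrac{2}{3}\sigma_n>n$. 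Thus (\ref{4.75}) implies (\ref{4.74}) for all $m\ge0$. At the endpoint $m=0$ one has $\beta_1=0$, so Theorem \ref{thm4.3} does not literally apply, but then $\alpha_1=2\check{d}_n/\sigma_n>0$ and positivity follows directly from (\ref{3.25}) of Lemma \ref{lem4.3}.

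It then remains to check (\ref{4.75}) for $9\le n\le27$ from the tabulated values of $\sigma_n$ and $\check{d}_n$; this is purely mechanical. For example $n=9$ gives $4\check{d}_9\approx15.26>6.65\approx\sigma_9$ and $\tfrac{4}{3}\check{d}_9+\tfrac{2}{3}\sigma_9\approx9.52>9$, while for $n\ge12$ the table has $\check{d}_n=\tfrac{n}{2}$, so the second inequality reads $\sigma_n>\tfrac{n}{2}$, which the table confirms. Chaining the implications back up, $\mathcal{A}(\alpha_1,\beta_1,e)>0$ yields $\mathcal{A}(\check{R}^{\pm}_1,e)>0$, hence $\mathcal{A}(\check{R}_1,e)>0$, hence $\mathcal{A}(R_1,e)>0$ in $\bar{D}_{2}(1,2\pi)$ for all $(m,e)\in[0,+\infty)\times[0,1)$.

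The only step demanding any care is the uniform-in-$m$ reduction $(\ref{4.75})\Rightarrow(\ref{4.74})$: one must apply the bound $\sqrt{m(m+n)}<m+\tfrac{n}{2}$ on the correct side so that the strict inequality is preserved, and keep track of the degenerate endpoint $m=0$ (where $\beta_1$ vanishes), which is handled separately via Lemma \ref{lem4.3}. Everything else is a table lookup.
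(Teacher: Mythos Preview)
Your proposal is correct and follows essentially the same route as the paper: reduce $\mathcal{A}(R_1,e)$ to $\mathcal{A}(\alpha_1,\beta_1,e)$ via (\ref{3.7})--(\ref{3.9}), apply Theorem~\ref{thm4.3} under the hypotheses (\ref{4.74}), and use the bound $\sqrt{m(m+n)}<m+\tfrac{n}{2}$ to derive the $m$-free conditions (\ref{4.75}), which are then verified numerically from the tables for $9\le n\le27$. You are in fact slightly more careful than the paper in explicitly treating the endpoint $m=0$ (where $\beta_1=0$ puts you outside the literal hypotheses of Theorem~\ref{thm4.3}) via Lemma~\ref{lem4.3}.
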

Moreover, if
\bea
n<\sigma_{n},\quad  n\leq4\check{d}_{n},\label{4.70}
\eea
then
\bea
\frac{\frac{3}{2}\sqrt{m(m+n)}}{m+\frac{1}{2}\sigma_{n}}<\frac{3}{2},\ \
        \frac{1}{3}<\frac{\check{d}_{n}+\frac{m}{2}}{\frac{3}{2}\sqrt{m(m+n)}}.
\eea
By Corollary \ref{cor4.9}, we have
\bea
\mathcal{A}(\alpha_1,\beta_1,e)>0\ \ \mathrm{in}\ \ \bar{D}_{1}(1,2\pi),\quad \mathrm{for}\ \ \forall(m,e)\in(0,+\infty)\times[0,1),\nonumber
\eea
hence
\bea
\mathcal{A}(R_{1},e)>0\ \ \mathrm{in}\ \ \bar{D}_{2}(1,2\pi),\quad \mathrm{for}\ \ \forall(m,e)\in(0,+\infty)\times[0,1).\nonumber
\eea
We need to find an integer $n_{0}\ge 0$ such that for any $n\geq n_{0}$, the inequality (\ref{4.70}) holds.

\begin{lem}\label{lem4.12}
When $n\ge 28$ we have
\bea
\mathcal{A}(R_{1},e)>0\ \ \mathrm{in}\ \ \bar{D}_{2}(1,2\pi),\quad \mathrm{for}\ \ \forall(m,e)\in(0,+\infty)\times[0,1).\nonumber
\eea
\end{lem}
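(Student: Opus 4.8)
By the discussion preceding the lemma it is enough to verify inequality~(\ref{4.70}) for every $n\ge 28$: once $n<\sigma_n$ and $n\le 4\check{d}_n$ are known, Corollary~\ref{cor4.9} gives $\mathcal A(\alpha_1,\beta_1,e)>0$ in $\bar D_1(1,2\pi)$ for all $(m,e)\in(0,+\infty)\times[0,1)$, and hence $\mathcal A(R_1,e)>0$ in $\bar D_2(1,2\pi)$, which is exactly the assertion. So the plan is to prove the two elementary estimates
$$\sigma_n>n\qquad\text{and}\qquad 4\check{d}_n>n\qquad\text{for all } n\ge 28,$$
of which the first is the substantive one.

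For $\sigma_n>n$, note that the crude bound $\csc t>1/t$ alone only yields $\sigma_n>\frac{n}{2\pi}H_{n-1}$, where $H_k=\sum_{i=1}^k\frac1i$; this exceeds $n$ only when $H_{n-1}>2\pi$, i.e.\ for $n$ of order several hundred, so it is far too weak. The point is to exploit the reflection symmetry $\csc\frac{\pi j}{n}=\csc\frac{\pi(n-j)}{n}$, which also registers the blow-up of $\csc$ at $t=\pi$: pairing $j$ with $n-j$ in $\sigma_n=\frac12\sum_{j=1}^{n-1}\csc\frac{\pi j}{n}$ and then applying $\sin t<t$ to the remaining terms, all of whose arguments $\pi j/n$ lie in $(0,\pi/2)$, gives
$$\sigma_n>\frac{n}{\pi}\,H_{[\frac{n}{2}]-1}.$$
For $n\ge 28$ we have $[\frac{n}{2}]-1\ge 13$, and $H_{13}=1+\frac12+\cdots+\frac1{13}=3.18\ldots>\pi$, so $\sigma_n>\frac{n}{\pi}\cdot\pi=n$.

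For $4\check{d}_n>n$, write $4\check{d}_n=\min\{8P_1,\,2n\}$; since $2n>n$, it remains to see that $8P_1>n$. Using $\theta_j=2\pi j/n$ and $d_{nj}=2\sin\frac{\pi j}{n}$, so that $1-\cos^2\theta_j=4\sin^2\frac{\pi j}{n}\cos^2\frac{\pi j}{n}$, every summand of $P_1=\sum_{j=1}^{n-1}\frac{1-\cos^2\theta_j}{2d_{nj}^3}$ is nonnegative, and keeping only $j=1$ and $j=n-1$ gives
$$P_1\ \ge\ \frac{\cos^2(\pi/n)}{2\sin(\pi/n)},\qquad\text{so}\qquad 8P_1\ \ge\ \frac{4\cos^2(\pi/n)}{\sin(\pi/n)}.$$
For $n\ge 28$ one has $4\cos^2(\pi/n)\ge 4\cos^2(\pi/28)>\pi$ and $\sin(\pi/n)<\pi/n$, hence $8P_1>\frac{\pi}{\sin(\pi/n)}>\frac{\pi}{\pi/n}=n$. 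Combined with the reduction above, this proves the lemma.

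The main obstacle is the inequality $\sigma_n>n$. It is true, but holds only by a thin margin in the estimate used: $H_{13}/\pi\approx1.01$ whereas $H_{12}<\pi$, which is why the folding bound above yields the conclusion precisely from $n\ge 28$ on (so that $[\frac{n}{2}]-1\ge 13$). The crucial point is that the lower estimate for $\sigma_n$ must be sharp enough to feel the second singularity of $\csc$ at $t=\pi$, and this is exactly what the symmetry-folding step supplies; the rest of the argument is routine.
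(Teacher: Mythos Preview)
Your proof is correct and follows essentially the same approach as the paper. Both reduce to verifying inequality~(\ref{4.70}), and both establish $\sigma_n>n$ via the same symmetry-folding bound $\sigma_n>\frac{n}{\pi}H_{[n/2]-1}$ together with $H_{13}>\pi$; the only difference is that the paper bounds $P_1$ through the identity $2P_1=\sigma_n-\frac12\cot\frac{\pi}{2n}$ and then reuses the $\sigma_n$ estimate, whereas you bound $P_1$ directly by keeping the $j=1$ and $j=n-1$ terms, which is an equally valid and slightly more self-contained alternative.
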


\begin{proof}
Since $\check{d}_{n}=\min\{2P_{1},\frac{n}{2}\}$, from the inequality (\ref{4.70}), we only need to find
$n_{0}$ such that $2P_{1}\geq\frac{n}{4}$ and $\sigma_{n}\geq n$ for all $n\geq n_{0}$. By using the
inequality $\sin x\leq x \leq \tan x$ for $x\in[0,\frac{\pi}{2}]$, we have
\bea
&& \sigma_{n}\geq\sum_{i=1}^{[\frac{n}{2}]-1}\frac{1}{\sin \frac{\pi i}{n}}\geq \sum_{i=1}^{[\frac{n}{2}]-1}\frac{n}{\pi i},\nonumber\\
&& 2P_{1}=\sigma_{n}-\frac{1}{2}\cot\frac{\pi}{2n}\geq\sum_{i=1}^{[\frac{n}{2}]-1}\frac{n}{\pi i}-\frac{n}{\pi}.\nonumber
\eea
Hence we only need to find $n_{0}$ such that
\bea
\sum_{i=1}^{[\frac{n_{0}}{2}]-1}\frac{1}{\pi i}\geq 1.\nonumber
\eea
Now it is easy to check that if $n_{0}=28$, then
\bea
\sum_{i=1}^{[\frac{n_{0}}{2}]-1}\frac{1}{\pi i}\approx 1.0123 \geq 1,\nonumber
\eea
which yields the lemma.
\end{proof}

\begin{rem}\label{remAA}
Note that the above method doesn't work for the cases $n=7$ or $8$.
The reason is that we have used the operator $\mathcal{A}(\check{R}_{1},e)$ to give the lower
bound for the original operator $\mathcal{A}(R_{1},e)$. Since $\mathcal{A}(\check{R}_{1},e)$
can be decomposed, it is much simpler to give its estimates. For $n\geq 9$, the operator
$\mathcal{A}(\check{R}_{1},e)$ is positive definite in $\bar{D}_{1}(1,2\pi)$, but it is not so for $n=7$ or $8$. In
fact, even for $e=0$ and $m$ being large enough, this operator is not positive definite. Hence,
in order to get some similar results for $n=7$ or $8$, it is necessary to study the original
operator $\mathcal{A}(R_{1},e)$. \end{rem}

By using some local methods, we can get the following lemma for $n=8$, which leads to the
stability result too. But it seems not work for $n=7$.

\begin{lem}\label{lem4.13}
There exist a function $m_{1}(e)>0$ depending on $e$ such that when $n=8$ the following holds,
%\phi_{1}(\mathcal{A}(R_{1},e))=0,\quad  \forall(\mu,e)\in(\mu_{1}(e),+\infty)\times[0,e_{0}), \ \ N=7.\nonumber\\
\bea
\mathcal{A}(R_{1},e)>0\ \ \mathrm{in}\ \ \bar{D}_{2}(1,2\pi),\quad \mathrm{for}\ \ \forall(m,e)\in(m_{1}(e),+\infty)\times[0,1).\nonumber
\eea
\end{lem}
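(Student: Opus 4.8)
The plan is to avoid the decomposition $\mathcal{A}(\check{R}_1,e)$ — which fails to be positive definite for $n=8$, as noted in Remark \ref{remAA} — and instead work directly with the full operator $\mathcal{A}(R_1,e)$ on $\bar{D}_2(1,2\pi)$, localizing near $m=+\infty$. Recall from \eqref{2.17} that $\mathcal{U}(1)$ has the block form with corner blocks $E_1=\frac{n+m}{2}I_2$, $G_1=(2P_1+\frac m2)I_2$ and off-diagonal block $F_1=\frac{3\sqrt{m(m+n)}}{2}\mathcal{N}$, so that $R_1=I_4+\frac1\lambda\mathcal{U}(1)$ with $\lambda=m+\frac12\sigma_n$. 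I would first rescale: since $\frac1\lambda\mathcal U(1)\to \tfrac12 I_4+\left(\begin{smallmatrix}0&\tfrac32\mathcal N\\ \tfrac32\mathcal N&0\end{smallmatrix}\right)$ as $m\to+\infty$ (here $n=8$ is fixed, $P_1=P_1(8)$ is a constant, and $\frac{n+m}{2\lambda}\to\frac12$, $\frac{2P_1+m/2}{\lambda}\to\frac12$, $\frac{3\sqrt{m(m+n)}}{2\lambda}\to\frac32$), the limiting operator is $\mathcal{A}(R_\infty,e)$ with $R_\infty=\tfrac32 I_4+\left(\begin{smallmatrix}0&\tfrac32\mathcal N\\ \tfrac32\mathcal N&0\end{smallmatrix}\right)$, which after the $T$-conjugation of \eqref{3.9} splits as $\mathcal{A}(R_\infty^+,e)\oplus\mathcal{A}(R_\infty^-,e)$ with $R_\infty^\pm=\tfrac32 I_2\pm\tfrac32\mathcal N = \mathrm{diag}(3,0)$ or $\mathrm{diag}(0,3)$, i.e. exactly the borderline Kepler-type operator $\mathcal{A}(1/2,3/2,e)$. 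By Lemma \ref{lem4.3}\,\eqref{3.27} (or Remark \ref{rem4.7}) this limit operator is merely $\geq 0$ on $\bar D_1(1,2\pi)$, with a two-dimensional kernel; so positivity must be recovered from the first-order-in-$1/m$ correction.

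The key computation is therefore the expansion $R_1 = R_\infty + \frac1m C + o(1/m)$ for a fixed symmetric matrix $C=C(n)$ (with $n=8$), obtained by Taylor-expanding each entry $\frac{n+m}{2\lambda}$, $\frac{2P_1+m/2}{\lambda}$, $\frac{3\sqrt{m(m+n)}}{2\lambda}$ in powers of $1/m$ about their limits. Then $\mathcal{A}(R_1,e)=\mathcal{A}(R_\infty,e)+\frac1m r_e(\theta)C+o(1/m)$. On the orthogonal complement of the (finite-dimensional, $m$-independent after conjugation) kernel $K$ of $\mathcal{A}(R_\infty,e)$, the limit operator is uniformly bounded below by some $c_0>0$ (for all $e\in[0,1)$ — or at least, following Remark \ref{rem4.7}, for $e$ in a range that can be pushed to all of $[0,1)$ since $\beta_m(e)<3/2$ strictly), so a Neumann/perturbation argument keeps $\mathcal{A}(R_1,e)$ positive there for $m$ large. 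On $K$ itself, positivity of $\mathcal{A}(R_1,e)$ reduces to positive-definiteness of the $2\times2$ (the kernel is two-dimensional) quadratic form $v\mapsto \frac1m\langle r_e(\theta)\,C\,v,v\rangle$ restricted to $K$, i.e. to checking that the matrix $P_K\,C\,P_K$ is positive definite, where $P_K$ is the $L^2$-projection onto $K$ paired against the explicit kernel elements $c(1+e\cos\theta)$ coming from Lemma \ref{lem4.3}. This is a finite, explicit sign check depending only on $C(8)$ and is where the special value $n=8$ enters (and where $n=7$ fails). Standard Lyapunov–Schmidt / analytic perturbation theory for self-adjoint operators then yields a threshold $m_1(e)>0$, depending continuously (indeed analytically) on $e\in[0,1)$, above which $\mathcal{A}(R_1,e)>0$ on $\bar D_2(1,2\pi)$.

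The main obstacle is the kernel-level sign check: one must produce the constant $C=C(8)$ explicitly — this requires the numerical value $P_1(8)$ and $\sigma_8$ (both tabulated above) — conjugate by $T$, identify the two kernel directions of $\mathcal{A}(R_\infty,e)$ (they sit in the two scalar blocks $\mathcal{A}(1/2,3/2,e)$, each contributing a one-dimensional kernel spanned by $1+e\cos\theta$ per Lemma \ref{lem4.3}), and verify that the $2\times2$ reduced matrix $\int_0^{2\pi} r_e(\theta)(1+e\cos\theta)^2\,d\theta \cdot (\text{block of } C \text{ paired against these directions})$ is positive definite. A subtlety is uniformity in $e$: the gap $c_0$ below the kernel and the size of the reduced form must be controlled as $e\to1$, where $r_e(\theta)$ blows up at $\theta=\pi$; I would handle this either by the monotonicity/comparison machinery of Theorem \ref{thm4.5} and Corollary \ref{cor4.9} (transporting a positivity statement at one eccentricity to larger ones) or by noting that $\int r_e(1+e\cos\theta)^2 d\theta = \int (1+e\cos\theta)\,d\theta = 2\pi$ is in fact independent of $e$, which makes the reduced form remarkably $e$-independent and the whole argument clean. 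I expect the write-up to be dominated by this bookkeeping rather than by any conceptual difficulty.
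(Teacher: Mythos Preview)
Your overall strategy --- set $\eta=1/m$, observe that the $T$-conjugated operator $\widetilde{\mathcal A}_1(\eta,e)$ converges at $\eta=0$ to two copies of the Kepler operator $\mathcal A(\tfrac12,\tfrac32,e)$, and then recover positivity from the first-order correction on the kernel via analytic perturbation theory --- is exactly the paper's approach. The paper carries this out via Kato's theory: for each unit kernel element $x_{0,e}=(a,b,c,d)^T$ it computes $\partial_\eta\lambda_{\eta,e}|_{\eta=0}=\langle\partial_\eta\widetilde{\mathcal A}_1(0,e)\,x_{0,e},x_{0,e}\rangle$ and checks this is strictly positive using the numerical values of $\alpha'(0,8),\beta'(0,8),\gamma'(0,8)$ together with an AM--GM inequality.

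However, your proposal contains a genuine dimensional error that would derail the concrete computation you outline. You assert the limit operator has a two-dimensional kernel, ``each block contributing a one-dimensional kernel spanned by $1+e\cos\theta$.'' But by \eqref{3.27} the Kepler block $\mathcal A(\tfrac12,\tfrac32,e)$ on $\bar D_2(1,2\pi)$ has $\nu_1=3$, not $1$; the function $1+e\cos\theta$ spans the kernel of the scalar operator $\mathcal A(0,e)$ in \eqref{3.26}, which is a different object. Hence $\dim\ker\widetilde{\mathcal A}_1(0,e)=6$, and the reduced form is $6\times6$, not $2\times2$. Moreover the Kepler kernel elements are not the simple scalar $1+e\cos\theta$, so your ``remarkably $e$-independent'' integral $\int r_e(1+e\cos\theta)^2\,d\theta=2\pi$ is not the quantity that actually appears. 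The paper avoids ever identifying the kernel explicitly: it writes a generic kernel element as $(a,b,c,d)^T$, expands the quadratic form $\langle\partial_\eta\widetilde{\mathcal A}_1\cdot x,x\rangle$ as a sum of weighted integrals of $a^2,b^2,c^2,d^2,ac,bd$ against $r_e(\theta)$, and verifies positivity by the pointwise inequality $(\alpha'+\gamma')^2-(\beta')^2>(\gamma')^2$ (numerically $1.3896>0.2043$) plus AM--GM. That step is where $n=8$ succeeds and is what you should substitute for your $2\times2$ check.
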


\begin{proof}
The operator $\mathcal{A}(R_{1},e)$ is similar to the operator
$\widetilde{\mathcal{A}}_{1}(\eta,e)=T^{t}\mathcal{A}(R_{1},e)T$, where
$T=\frac{1}{\sqrt{2}}\left(
     \begin{array}{cc}
        I_{2} & I_{2} \\
       -I_{2} & I_{2} \\
         \end{array}
        \right)$, and $\eta=\frac{1}{m}$. Then we have
\bea
\widetilde{\mathcal{A}}_{1}(\eta,e)=
-\frac{d^2}{d\theta^2}I_{4}-2\mathbb{J}_{2}\frac{d}{d\theta}
+r_e(\theta)\left(I_{4}+\alpha(\eta,n)I_{4}+\beta(\eta,n)\left(\begin{array}{cccc}-\mathcal{N} & O_{2}\\
O_{2} & \mathcal{N}\end{array}\right)+\gamma(\eta,n)\left(\begin{array}{cccc}I_{2} & I_{2}\\
I_{2} & I_{2}\end{array}\right)\right),\nonumber
\eea
where $O_{2}$ is the $2\times 2$ zero matrix, $\alpha(\eta,n)=\frac{4P_{1}\eta+1}{\sigma_{n}\eta+2}$,
$\beta(\eta,n)=\frac{3\sqrt{1+n\eta}}{\sigma_{n}\eta+2}$, and $\gamma(\eta,n)=\frac{(n/2-2P_{1})\eta}{\sigma_{n}\eta+2}$.
When $\eta=0$,
\bea
\widetilde{\mathcal{A}}_{1}(\eta,e)&=&
-\frac{d^2}{d\theta^2}I_{4}-2\mathbb{J}_{2}\frac{d}{d\theta}
+r_e(\theta)\left(I_{4}+\frac{1}{2}I_{4}+\frac{3}{2}\left(\begin{array}{cccc}-\mathcal{N} & O_{2}\\
O_{2} & \mathcal{N}\end{array}\right)\right)\nonumber\\
&=&\mathcal{A}(\frac{1}{2},\frac{3}{2},e)\oplus\mathcal{A}(\frac{1}{2},-\frac{3}{2},e),\nonumber
\eea
and $J^{T}_{2}\mathcal{A}(\frac{1}{2},-\frac{3}{2},e)J_{2}=\mathcal{A}(\frac{1}{2},\frac{3}{2},e)$, hence
$\widetilde{\mathcal{A}}_{1}(0,e)$ can be directly decomposed into the sum
of two operators which are similar to $\mathcal{A}(\frac{1}{2},\frac{3}{2},e)$, then
$\dim\emph{Ker}(\widetilde{\mathcal{A}}_{1}(0,e))=6$. For each fixed $e\in[0,1)$, and every eigenvalue
$\lambda_{0,e}=0$ of $\widetilde{\mathcal{A}}_{1}(0,e)$, we assume $\widetilde{x}_{e}=\widetilde{x}_{e}(\theta)$
with unit norm such that $\widetilde{\mathcal{A}}_{1}(0,e)\widetilde{x}_{e}=0$. Then
$\widetilde{\mathcal{A}}_{1}(\eta,e)$ is an analytic path of self-adjoint operators in $\eta$. Following
Kato (\cite{Ko},p.120 and p.386), we can choose a smooth path of unit norm eigenvectors $x_{\eta,e}$ with
$x_{0,e}=\widetilde{x}_{e}$ belonging to a smooth path of real eigenvalues $\lambda_{\eta,e}$ of the
self-adjoint operator $\widetilde{\mathcal{A}}_{1}(\eta,e)$ such that for small enough $\eta$, we have
\bea
\widetilde{\mathcal{A}}_{1}(\eta,e)x_{\eta,e}=\lambda_{\eta,e}x_{\eta,e},\nonumber
\eea
where $\lambda_{0,e}=0$. Then we have
\bea
\frac{\partial}{\partial\eta}\lambda_{\eta,e}|_{\eta=0} =
\<\frac{\partial}{\partial\eta}\widetilde{\mathcal{A}}_{1}(\eta,e)x_{\eta,e},x_{\eta,e}\>|_{\eta=0}. \nonumber
\eea

Let $x_{0,e}=(a, b, c, d)^T$, direct computations show that
\bea
\frac{\partial}{\partial\eta}\lambda_{\eta,e}|_{\eta=0}=
(\alpha^\prime(0,n)+\gamma^\prime(0,n)-\beta^\prime(0,n))\int_{0}^{2\pi}a^2 r_e(\theta) d\theta\nonumber\\
+(\alpha^\prime(0,n)+\gamma^\prime(0,n)+\beta^\prime(0,n))\int_{0}^{2\pi}b^2r_e(\theta)d\theta\nonumber\\
+(\alpha^\prime(0,n)+\gamma^\prime(0,n)+\beta^\prime(0,n))\int_{0}^{2\pi}c^2r_e(\theta)d\theta\nonumber\\
+(\alpha^\prime(0,n)+\gamma^\prime(0,n)-\beta^\prime(0,n))\int_{0}^{2\pi}d^2r_e(\theta)d\nonumber\\
+2\gamma^\prime(0,n)\(\int_{0}^{2\pi}(ac+bd)r_e(\theta)d\theta\). \label{4.91}
\eea
For the case $n=8$, we have
\bea &\alpha^\prime(0,n)\approx1.693575, \ \ \beta^\prime(0,n))\approx 1.792725,\ \
\gamma^\prime(0,n)\approx 0.452, \nonumber \\
&\alpha^\prime(0,n)+\gamma^\prime(0,n)-\beta^\prime(0,n)\approx0.3529, \nonumber\\
&\alpha^\prime(0,n)+\gamma^\prime(0,n)+\beta^\prime(0,n)\approx3.9383, \nonumber\\
&(\alpha^\prime(0,n)+\gamma^\prime(0,n))^2-\beta^\prime(0,n)^2\approx1.3896. \nonumber
\eea
Together with the average value inequality, we obtain
\bea
&&(\alpha^\prime(0,n)+\gamma^\prime(0,n)-\beta^\prime(0,n))\int_{0}^{2\pi} a^2 r_e(\theta)d\theta
   +(\alpha^\prime(0,n)+\gamma^\prime(0,n)+\beta^\prime(0,n))\int_{0}^{2\pi} c^2r_e(\theta) d\theta\nonumber\\
&&\geq2\((\alpha^\prime(0,n)+\gamma^\prime(0,n))^2-\beta^\prime(0,n)^2\)^{\frac{1}{2}}\int_{0}^{2\pi} |ac| r_e(\theta)d\theta \nonumber\\
&&\geq2\gamma^\prime(0,n)\int_{0}^{2\pi} ac r_e(\theta) d\theta \label{4.92}
\eea
If we choose $b$ and $d$ instead of $a$ and $c$, this inequality also holds. Note that the last equality in
(\ref{4.92}) holds only when $ac\equiv 0$. But in this case it easy to check that
$\frac{\partial}{\partial\eta}\lambda_{\eta,e}|_{\eta=0}>0$. Hence we always have
\bea
\frac{\partial}{\partial\eta}\lambda_{\eta,e}|_{\eta=0}>0. \nonumber
\eea
This inequality implies that for any fixed $e\in[0,1)$, there exists a function $\eta_{1}(e)>0$
small enough such that $\widetilde{\mathcal{A}}_{1}(\eta,e)> 0$ for every $\eta\in(0,\eta_{1}(e))$.
Now letting $m_{1}(e)=\frac{1}{\eta_{1}(e)}$, the proof of Lemma \ref{lem4.13} is complete.
\end{proof}

From Lemmas (\ref{lem4.11}), (\ref{lem4.12}) and (\ref{lem4.13}), we have

\begin{lem}\label{thm4.10}
%For $N=7$, There exist a constant $e_{0}\in(0,1)$ and function $\mu_{1}(e)>0$ depend on $e$ such that
%$\forall(\mu,e)\in(\mu_{1}(e),+\infty)\times[0,e_{0})$, $\phi_{1}(\mathcal{A}_{1}(\mu,e))=0;$
For $n=8$, there exists a function $m_{1}(e)>0$ in $e$ such that
$\mathcal{A}(R_{1},e)>0$ in $\bar{D}_{2}(1,2\pi)$ for all $(m,e)\in (m_{1}(e),+\infty)\times[0,1)$.

For all $n\geq 9$ and $(m,e)\in (0,+\infty)\times[0,1)$, we have $\mathcal{A}(R_{1},e)>0$ in $\bar{D}_{2}(1,2\pi)$.
\end{lem}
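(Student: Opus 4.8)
The plan is to assemble the statement by a case split on $n$, since each of the three preceding lemmas already handles a disjoint range of the index $n$ and the corresponding parameter region. First I would dispose of the case $n=8$: Lemma \ref{lem4.13} produces exactly the claimed function $m_1(e)>0$ with $\mathcal{A}(R_1,e)>0$ in $\bar{D}_2(1,2\pi)$ for every $(m,e)\in(m_1(e),+\infty)\times[0,1)$, so the first assertion requires nothing beyond quoting that lemma.

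For the range $n\geq 9$ I would further split according to whether $n$ lies in the finite window $9\leq n\leq 27$ or in the tail $n\geq 28$. In the finite window, Lemma \ref{lem4.11} already gives $\mathcal{A}(R_1,e)>0$ on $\bar{D}_2(1,2\pi)$ for all $(m,e)\in[0,+\infty)\times[0,1)$, which is even slightly stronger than what is asserted here (it includes $m=0$). In the tail, Lemma \ref{lem4.12} gives the same positivity for all $(m,e)\in(0,+\infty)\times[0,1)$. Since the integers $n$ with $9\leq n\leq 27$ together with the integers $n\geq 28$ exhaust all integers $n\geq 9$, the two lemmas combine to give $\mathcal{A}(R_1,e)>0$ in $\bar{D}_2(1,2\pi)$ for every $n\geq 9$ and every $(m,e)\in(0,+\infty)\times[0,1)$, which is the second assertion.

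The only point I would double-check is purely bookkeeping: that the hypotheses of Lemmas \ref{lem4.11}--\ref{lem4.13} tile the relevant set of pairs $(n,m,e)$ with no gap, and that the (possibly weaker) conclusion stated in Lemma \ref{thm4.10} — e.g.\ whether $m=0$ is included — is consistent with what each lemma delivers. This is immediate. There is no genuine obstacle at this stage: all of the analytic content (the comparison bounds $\mathcal{A}(\check R_1,e)\le\mathcal{A}(R_1,e)$ and the splitting $T^t\mathcal{A}(\check R_1,e)T=\mathcal{A}(\check R_1^+,e)\oplus\mathcal{A}(\check R_1^-,e)$, the criteria-operator results of Section 3 invoked through Theorem \ref{thm4.3} and Corollary \ref{cor4.9}, the numerical verification of the inequalities in \eqref{4.75} and \eqref{4.70} via the $\sigma_n,\check d_n$ tables, and the Kato-type perturbation argument in $\eta=1/m$ at $\eta=0$ for $n=8$) has already been carried out in those three lemmas. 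Hence the proof reduces to the case analysis above.
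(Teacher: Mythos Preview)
Your proposal is correct and matches the paper's own argument exactly: the paper simply states that Lemma~\ref{thm4.10} follows from Lemmas~\ref{lem4.11}, \ref{lem4.12} and \ref{lem4.13}, which is precisely the case split you describe.
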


\textbf{2)}\ \ For $2\leq l \leq [\frac{n-1}{2}]$, from (\ref{3.16}), (\ref{3.17}), we have
\bea
\mathcal{A}(R_l,e))\geq
\mathcal{A}(\check{R}_l,e),\ \
\mathcal{A}(\check{R}_l,e)=\mathcal{A}(\check{R}_{l,0},e)\oplus\mathcal{A}(\check{R}_{l,0},e),\nonumber
\eea
where
\bea
\mathcal{A}(\check{R}_{l,0},e)&=&-\frac{d^2}{d\theta^2}I_{2}-2J_{2}\frac{d}{d\theta}
       +r_e(\theta)(I_{2}+\frac{1}{2\lambda}(E_{l}+G_{l}-2\tilde{F}_{l})
+\frac{1}{2\lambda}(E_{l}-G_{l})\nonumber\\
&=&-\frac{d^2}{d\theta^2}I_{2}-2J_{2}\frac{d}{d\theta}+r_e(\theta)(I_{2}+\frac{1}{2\lambda}(a_{l}+b_{l}-2S_{l})I_{2}
+\frac{1}{2\lambda}(a_{l}-b_{l})\mathcal{N}), \nonumber \\
%\mathcal{A}(\hat{R}_{l,0},e)&=&-\frac{d^2}{dt^2}I_{2}-2J_{2}\frac{d}{dt}+r_e(\theta)(I_{2}+\frac{1}{2\lambda}(E_{l}+G_{l}+2\tilde{F}_{l})
%+\frac{1}{2\lambda}(E_{l}-G_{l}))\nonumber\\
%&=&-\frac{d^2}{dt^2}I_{2}-2J_{2}\frac{d}{dt}+r_e(\theta)(I_{2}+\frac{1}{2\lambda}(a_{l}+b_{l}+2S_{l})I_{2}
%+\frac{1}{2\lambda}(a_{l}-b_{l})\mathcal{N}),
\label{4.26}
\eea
with $a_{l}=P_{l}-3Q_{l}+2m$, $b_{l}=P_{l}+3Q_{l}-m$,
$P_{l}=\sum_{j=1}^{n-1}\frac{1-\cos\theta_{jl}\cos\theta_{j}}{2d_{nj}^3}$,
$S_{l}=\sum_{j=1}^{n-1}\frac{\sin\theta_{jl}\sin\theta_{j}}{2d_{nj}^3}$,
$Q_{l}=\sum_{j=1}^{n-1}\frac{\cos \theta_{j}-\cos \theta_{jl}}{2d_{nj}^3}$.
By Corollary \ref{cor4.9}, if
\bea
\frac{1}{3}<\frac{a_{l}+b_{l}-2S_{l}}{a_{l}-b_{l}},\ \ \frac{a_{l}-b_{l}}{2\lambda}<\frac{3}{2},\ \  b_{l}\leq a_{l}, \label{4.89}
\eea
then
\bea
\mathcal{A}(\check{R}_{l,0},e)>0\ \ \mathrm{in}\ \ \bar{D}_{1}(1,2\pi),\quad \mathrm{for}\ \ \forall e\in[0,1). \nonumber
\eea
Hence
\bea
\mathcal{A}(R_l,e)>0\ \ \mathrm{in}\ \ \bar{D}_{2}(1,2\pi),\quad \mathrm{for}\ \ \forall e\in[0,1).\nonumber
\eea

Inequalities in (\ref{4.89}) are equivalent to
\bea
\frac{1}{3}<\frac{2(P_{l}-S_{l})+m}{3m-6Q_{l}},\ \ \frac{3m-6Q_{l}}{2m+\sigma_{n}}<\frac{3}{2}, 2Q_{l}\leq m.\nonumber
\eea
Assume $m>2Q_{\max}(n-1)=2\max\{Q_{l}|2\leq l \leq [\frac{n-1}{2}]\}$ holds. Then from above
inequalities, we need
\bea
-Q_{l}<P_{l}-S_{l},\ \ -6Q_{l}<\frac{3}{2}\sigma_{n}.\nonumber
\eea
From Roberts \cite{Rob1}, we have $P_{l}\geq S_{l}$, $Q_{l}>0$ and $\sigma_{n}>0$. Hence the inequalities
are always true. Now we have

\begin{lem}\label{thm4.11} For $2\leq l \leq [\frac{n-1}{2}]$, we have
\bea
\mathcal{A}(R_l,e)>0\ \ \mathrm{in}\ \ \bar{D}_{2}(1,2\pi),\quad \mathrm{for}\ \
\forall(m,e)\in(2Q_{\max}(n-1),+\infty)\times[0,1),\nonumber
\eea
where $Q_{\max}(n-1)=\max\{Q_{l}|\ \ 2\leq l \leq [\frac{n-1}{2}]\}.$
\end{lem}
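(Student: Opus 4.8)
The plan is to reduce the claim for $2\le l\le [\frac{n-1}{2}]$ to an application of Corollary \ref{cor4.9} for the scalar-like operator $\mathcal{A}(\check{R}_{l,0},e)$, exactly as the earlier parts of this subsection did for $l=1$. First I would invoke the chain of inequalities already established: from (\ref{3.16}) and (\ref{3.17}) we have $\mathcal{A}(R_l,e)\ge \mathcal{A}(\check{R}_l,e)=\mathcal{A}(\check{R}_{l,0},e)\oplus\mathcal{A}(\check{R}_{l,0},e)$, so it suffices to prove $\mathcal{A}(\check{R}_{l,0},e)>0$ in $\bar{D}_1(1,2\pi)$ for all $e\in[0,1)$. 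Writing $\check{R}_{l,0}=I_2+\frac{1}{2\lambda}(a_l+b_l-2S_l)I_2+\frac{1}{2\lambda}(a_l-b_l)\mathcal{N}$, this operator has precisely the form $\mathcal{A}(\alpha,\beta,e)$ with $\alpha=\frac{a_l+b_l-2S_l}{2\lambda}$ and $\beta=\frac{a_l-b_l}{2\lambda}$ (after the same $\mathcal{N}$-sign bookkeeping used in part \textbf{1}), provided $b_l\le a_l$ so that $\beta\ge 0$. Applying the last displayed conclusion of Corollary \ref{cor4.9} (the case $e=e_0$, giving positivity in $\bar{D}_1(1,2\pi)$ for $\beta\in[0,\frac32)$ and $\alpha>\frac13\beta$) yields the sufficient conditions (\ref{4.89}), namely $\frac13<\frac{a_l+b_l-2S_l}{a_l-b_l}$, $\frac{a_l-b_l}{2\lambda}<\frac32$, and $b_l\le a_l$.

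Next I would translate these three conditions into statements about the mass $m$ using the explicit formulas $a_l=P_l-3Q_l+2m$, $b_l=P_l+3Q_l-m$, and $\lambda=m+\frac12\sigma_n$. A direct substitution gives $a_l-b_l=3m-6Q_l$ and $a_l+b_l-2S_l=2P_l-2S_l+m$, so the conditions become $\frac13<\frac{2(P_l-S_l)+m}{3m-6Q_l}$, $\frac{3m-6Q_l}{2m+\sigma_n}<\frac32$, and $2Q_l\le m$. The key simplifying assumption is $m>2Q_{\max}(n-1):=2\max\{Q_l\mid 2\le l\le[\frac{n-1}{2}]\}$, which immediately handles the third condition and makes $a_l-b_l=3m-6Q_l>0$, so that the first and second inequalities may be cleared of denominators. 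After clearing, the first reduces to $-Q_l<P_l-S_l$ and the second to $-6Q_l<\frac32\sigma_n$; both are trivially true once one knows $P_l\ge S_l$, $Q_l>0$, and $\sigma_n>0$.

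I would close the argument by citing Roberts \cite{Rob1} for the facts $P_l\ge S_l$ and $Q_l>0$ (these are the sign properties of the lattice sums entering the $(1+n)$-gon Hessian), and noting $\sigma_n>0$ is obvious from its definition as a sum of cosecants of angles in $(0,\pi)$. Chaining everything: for $m>2Q_{\max}(n-1)$ and all $e\in[0,1)$ we get $\mathcal{A}(\check{R}_{l,0},e)>0$ in $\bar{D}_1(1,2\pi)$, hence $\mathcal{A}(\check{R}_l,e)>0$, hence by the comparison $\mathcal{A}(R_l,e)\ge\mathcal{A}(\check{R}_l,e)$ we conclude $\mathcal{A}(R_l,e)>0$ in $\bar{D}_2(1,2\pi)$, which is the assertion of the lemma.

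The only genuine subtlety — rather than an obstacle — is making sure the reduction of $\mathcal{A}(\check{R}_{l,0},e)$ to the normal form $\mathcal{A}(\alpha,\beta,e)$ is legitimate, i.e.\ that the coefficient of $\mathcal{N}$ is nonnegative; this is exactly why $b_l\le a_l$ (equivalently $m\ge 2Q_l$) must be imposed before one can quote Corollary \ref{cor4.9}, and it is automatically covered by the running hypothesis $m>2Q_{\max}(n-1)$. Everything else is bookkeeping with the explicit expressions for $a_l,b_l,\lambda$ and the sign inequalities from \cite{Rob1}, so no new analytic input beyond what is already in Section 3 is required.
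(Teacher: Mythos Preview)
Your proposal is correct and follows essentially the same approach as the paper: reduce via (\ref{3.16})--(\ref{3.17}) to $\mathcal{A}(\check{R}_{l,0},e)$, apply Corollary~\ref{cor4.9} to obtain the three conditions (\ref{4.89}), substitute the formulas for $a_l,b_l,\lambda$, and then use $P_l\ge S_l$, $Q_l>0$ from Roberts~\cite{Rob1} together with $\sigma_n>0$ to verify them under the hypothesis $m>2Q_{\max}(n-1)$. Your explicit remark that $m>2Q_l$ is what guarantees $a_l-b_l>0$ (so that $\beta\ge0$ and denominators can be cleared) is exactly the point the paper is using implicitly.
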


\textbf{3)}\ \ For $n\in2\mathbb{N}$, $l=[\frac{n}{2}]$, we have
\bea
\mathcal{A}(R_l,e)=-\frac{d^2}{d\theta^2}I_{2}-2J_{2}\frac{d}{d\theta}+r_e(\theta)(I_{2}+\frac{1}{2\lambda}(a_{l}+b_{l})I_{2}
+\frac{1}{2\lambda}(a_{l}-b_{l})\mathcal{N} ), \label{4.28}
\eea
with $a_{l}=P_{l}-3Q_{l}+2m$ and $b_{l}=P_{l}+3Q_{l}-m$. By Corollary \ref{cor4.9}, if
\bea
\frac{1}{3}<\frac{a_{l}+b_{l}}{a_{l}-b_{l}},\ \ \frac{a_{l}-b_{l}}{2\lambda}<\frac{3}{2},\ \ b_{l}\leq a_{l}, \label{4.86}
\eea
then
\bea
\mathcal{A}(R_l,e)>0\ \ \mathrm{in}\ \ \bar{D}_{1}(1,2\pi),\quad \mathrm{for}\ \ \forall e\in[0,1).\nonumber
\eea
Inequalities in (\ref{4.86}) are equivalent to
\bea
\frac{1}{3}<\frac{2P_{l}+m}{3m-6Q_{l}},\ \ \frac{3m-6Q_{l}}{2m+\sigma_{n}}<\frac{3}{2}, 2Q_{l}\leq m. \nonumber
\eea
Assume $m>2Q_{l}$. Then from above inequalities, we need
\bea
-Q_{l}\leq P_{l},\ \ -6Q_{l}<\frac{3}{2}\sigma_{n}.\nonumber
\eea
Also from Roberts \cite{Rob1}, we have $P_{l}\geq0$, $Q_{l}>0$ and $\sigma_{n}>0$. Hence the
inequalities hold always. Now we have

\begin{lem}\label{thm4.12} For $n\in 2\mathbb{N}$ and $l=[\frac{n}{2}]$, we have
\bea
\mathcal{A}(R_l,e)>0\ \ \mathrm{in}\ \ \bar{D}_{1}(1,2\pi),\quad \mathrm{for}\ \ \forall(m,e)\in(2Q_{l},+\infty)\times[0,1).\nonumber
\eea
\end{lem}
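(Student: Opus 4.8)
The plan is to follow exactly the pattern already established for the cases $2\le l\le[\frac{n-1}{2}]$ in Lemma \ref{thm4.11}, specializing to $l=[\frac{n}{2}]$ when $n$ is even. First I would write down the operator $\mathcal{A}(R_l,e)$ in the form (\ref{4.28}), so that after the orthogonal change of variables $\mathcal{R}(\theta)$ it is of type $\mathcal{A}(\alpha,\beta,e)$ with $\alpha$ and $\beta$ read off from the coefficients: here $\alpha=\frac{1}{2\lambda}(a_l+b_l)$, $\beta=\frac{1}{2\lambda}(a_l-b_l)$, where $a_l=P_l-3Q_l+2m$, $b_l=P_l+3Q_l-m$ and $\lambda=m+\frac12\sigma_n$. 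Then I would invoke Corollary \ref{cor4.9} (the version with $e=e_0$, giving positivity of $\mathcal{A}(\alpha,\beta,e)$ in $\bar D_1(1,2\pi)$ whenever $\beta\in[0,\tfrac32)$ and $\alpha>\tfrac13\beta$, together with the normalization hypothesis $\alpha\ge\tfrac12$ implicit via Theorem \ref{thm4.3}), which reduces the whole lemma to verifying the three scalar inequalities (\ref{4.86}): $\frac13<\frac{a_l+b_l}{a_l-b_l}$, $\frac{a_l-b_l}{2\lambda}<\frac32$, and $b_l\le a_l$.

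Next I would translate (\ref{4.86}) into inequalities in $m$. Since $a_l-b_l=3m-6Q_l$ and $a_l+b_l=2P_l+m$ (note the $Q_l$ terms cancel in the sum and the $P_l$ terms cancel in the difference), the three conditions become $\frac13<\frac{2P_l+m}{3m-6Q_l}$, $\frac{3m-6Q_l}{2m+\sigma_n}<\frac32$, and $2Q_l\le m$. Imposing the standing assumption $m>2Q_l$ makes the first denominator positive, so clearing it turns the first inequality into $-Q_l\le P_l$, and the second into $-6Q_l<\frac32\sigma_n$; the third is $2Q_l\le m$, already assumed. Thus under $m>2Q_l$ the lemma follows once we know $P_l\ge 0$, $Q_l>0$ and $\sigma_n>0$.

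Those three sign facts are exactly the ones recorded from Roberts \cite{Rob1}: $P_l\ge 0$ (for $l=[\frac n2]$ this is the endpoint case, where indeed $P_{n/2}\ge 0$), $Q_l>0$, and $\sigma_n=\frac12\sum_{i=1}^{n-1}\csc\frac{\pi i}{n}>0$ since each cosecant is positive. Granting these, $-Q_l<0\le P_l$ and $-6Q_l<0<\frac32\sigma_n$ hold automatically, so (\ref{4.86}) holds for every $m>2Q_l$ and every $e\in[0,1)$, which gives $\mathcal{A}(R_l,e)>0$ in $\bar D_1(1,2\pi)$ for all $(m,e)\in(2Q_l,+\infty)\times[0,1)$, completing the proof.

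I do not expect a genuine obstacle here: the argument is entirely parallel to Lemma \ref{thm4.11}, the only difference being that the $l=[\frac n2]$ block of $\mathcal{U}$ is $2\times2$ rather than $4\times4$, so there is no symmetrization step (no $\tilde F_l=S_l I_2$ bound) needed — the operator is already of the scalar-reduced form $\mathcal{A}(\alpha,\beta,e)$ after conjugation by $\mathcal{R}(\theta)$. The one point to be slightly careful about is checking that the hypotheses of Corollary \ref{cor4.9} are met, in particular that the relevant $\alpha=\frac{2P_l+m}{2(2m+\sigma_n)}$ is large enough relative to $\tfrac13\beta$ and that $\beta<\tfrac32$; but both reduce, as above, to the elementary inequalities in $m$ and the sign information from \cite{Rob1}, so the verification is routine.
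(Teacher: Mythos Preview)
Your proposal is correct and follows essentially the same approach as the paper: write $\mathcal{A}(R_l,e)$ in the form (\ref{4.28}), apply Corollary~\ref{cor4.9} to reduce to the three inequalities (\ref{4.86}), translate these into $-Q_l\le P_l$ and $-6Q_l<\tfrac32\sigma_n$ under the assumption $m>2Q_l$, and conclude via the sign facts $P_l\ge 0$, $Q_l>0$, $\sigma_n>0$ from \cite{Rob1}. The only superfluous remarks are the conjugation by $\mathcal{R}(\theta)$ (unnecessary, since $R_l$ is already of the form $R_{\alpha,\beta}$) and the reference to $\alpha\ge\tfrac12$ via Theorem~\ref{thm4.3} (Corollary~\ref{cor4.9} in its final form requires only $\beta<\tfrac32$ and $\alpha>\tfrac13\beta$).
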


Now by using (\ref{4.8}) and Lemmas \ref{thm4.10}, \ref{thm4.11} and \ref{thm4.12}, we prove that
the following theorem holds for the $(1+n)$-system.

\begin{thm}\label{thm4.13}
If $n\geq 9$, then
\bea
\mathcal{A}(R,e)>0\ \ \mathrm{in}\ \ \bar{D}_{n-1}(1,2\pi),\quad \mathrm{for}\ \ \forall(m,e)\in(2Q_{\max}(n),+\infty)\times[0,1).\nonumber
\eea
If $n=8$, then
\bea
%\phi_{1}(\mathcal{A}(\mu,e))=0, \forall(\mu,e)\in(\max\{2Q_{\max}(N), \mu_{1}(e)\},+\infty)\times[0,e_{0}), \ \ N=7.\nonumber\\
\mathcal{A}(R,e)>0\ \ \mathrm{in}\ \ \bar{D}_{n-1}(1,2\pi),\quad \mathrm{for}\ \ \forall(m,e)\in(\max\{2Q_{\max}(n), m_{1}(e)\},+\infty)\times[0,1).\nonumber
\eea
where $Q_{\max}(n)=\max\{Q_{l}| 2\leq l \leq [\frac{n}{2}]\}.$
\end{thm}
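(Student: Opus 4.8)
The plan is simply to assemble the block estimates already proved above. The starting point is the orthogonal splitting \eqref{4.76}, \eqref{4.8}, which writes $\mathcal{A}(R,e)$ as the direct sum of the blocks $\mathcal{A}(R_l,e)$, $l=1,\dots,[n/2]$; since the boundary condition defining $\bar{D}(1,2\pi)$ is diagonal, an orthogonal direct sum of such self-adjoint operators is positive definite if and only if every summand is, so $\mathcal{A}(R,e)>0$ exactly when each $\mathcal{A}(R_l,e)>0$ on its own domain. Thus the whole matter reduces to exhibiting a single lower bound on $m$, uniform in $e\in[0,1)$, that forces all $[n/2]$ blocks to be positive definite at once.

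Then I would plug in the three block lemmas. The $l=1$ block is supplied by Lemma \ref{thm4.10}: positive for all $m>0$ when $n\ge 9$, and positive once $m>m_1(e)$ when $n=8$. The middle blocks $2\le l\le[(n-1)/2]$ are handled by Lemma \ref{thm4.11}, which requires $m>2Q_{\max}(n-1)=2\max\{Q_l:2\le l\le[(n-1)/2]\}$; and, when $n$ is even, the top block $l=n/2$ is handled by Lemma \ref{thm4.12}, which requires $m>2Q_{n/2}$. The only routine point to check is that $Q_{\max}(n):=\max\{Q_l:2\le l\le[n/2]\}$ dominates both of these thresholds — immediate, since $\{2,\dots,[(n-1)/2]\}$ and, for even $n$, $\{n/2\}$ are subsets of $\{2,\dots,[n/2]\}$, while for odd $n$ one has $[n/2]=[(n-1)/2]$ and no separate top block occurs. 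Hence $m>2Q_{\max}(n)$ takes care of all blocks of index $\ge 2$, and since nothing is needed for $l=1$ when $n\ge 9$, the condition $m>2Q_{\max}(n)$ suffices in that case; for $n=8$ one must additionally keep $m>m_1(e)$, i.e.\ take $m>\max\{2Q_{\max}(n),m_1(e)\}$, which is precisely the stated threshold. Combining with the decomposition gives $\mathcal{A}(R,e)>0$ in the asserted ranges.

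I do not expect a genuine obstacle here: all the analytic content — the Sturm--Liouville/Morse-index comparisons of Section~3, the reduction to the scalar criterion operators $\mathcal{A}(\alpha,\beta,e)$, and in particular the delicate Kato-type perturbation argument of Lemma \ref{lem4.13} that rescues the $n=8$, $l=1$ case — has already been carried out, and Theorem \ref{thm4.13} is just their conjunction via \eqref{4.8}. The one thing that deserves care is pure bookkeeping: making sure the ranges $l=1$, $2\le l\le[(n-1)/2]$, and (for even $n$) $l=n/2$ exhaust $\{1,\dots,[n/2]\}$ without omission, and that only the monotonicity $Q_{\max}(n-1)\le Q_{\max}(n)$ and $Q_{n/2}\le Q_{\max}(n)$ is invoked, so that the constant $2Q_{\max}(n)$ in the statement is indeed the sharp outcome of this assembly.
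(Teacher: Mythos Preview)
Your proposal is correct and follows exactly the paper's approach: the paper's entire proof consists of the single sentence ``by using (\ref{4.8}) and Lemmas \ref{thm4.10}, \ref{thm4.11} and \ref{thm4.12}, we prove that the following theorem holds,'' and you have simply spelled out the bookkeeping behind that sentence --- the block decomposition, the three block lemmas, and the observation that $2Q_{\max}(n)$ dominates the thresholds $2Q_{\max}(n-1)$ and $2Q_{n/2}$.
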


\subsection{Estimate $-1$-Maslov type index of (1+n)-gon system}\label{sec4.6}%Subsection 4.2.

\textbf{1)}\ \ For $l=1$, we have
%\bea
%&&\phi_{\omega}(\mathcal{A}(\hat{R}_1,e))\leq\phi_{\omega}(\mathcal{A}(R_1,e)),\nonumber \\
%&&\phi_{\omega}(\mathcal{A}(\hat{R}_1,e))=2\phi_{\omega}(\mathcal{A}(\hat{R}^+_1,e)),\nonumber
%\eea
\bea
\mathcal{A}(R_1,e)=-\frac{d^2}{d\theta^2}I_{2}-2\mathbb{J}_{2}\frac{d}{d\theta}+r_e(\theta)R_{1},\nonumber
\eea
%where
%\bea
%\mathcal{A}(\hat{R}^+_1,e)=-\frac{d^2}{d\theta^2}I_{2}-2J_{2}\frac{d}{d\theta}+r_e(\theta)(I_{2}+\frac{1}{\lambda}(\hat{d}_{n}+
%\frac{m}{2})I_{2}+\frac{3\sqrt{m(m+n)}}{2\lambda}\mathcal{N}),\nonumber
%\eea
%with $\lambda=m+\frac{1}{2}\sigma_{n}$, $\hat{d}_{n}=\max\{2P_{1},\frac{n}{2}\}$,
%$\sigma_{n}=\frac{1}{2}\sum_{i=1}^{n-1}\csc \frac{\pi i}{n}$ and $P_{1}=\sum_{j=1}^{n-1}\frac{1-\cos^2 \theta_{j}}{2d_{nj}^3}$.
since
\bea
\lim_{m\rightarrow+\infty}\frac{1}{\lambda}(\hat{d}_{n}+\frac{m}{2})=\frac{1}{2},\ \
\lim_{m\rightarrow+\infty}\frac{3\sqrt{m(m+n)}}{2\lambda}=\frac{3}{2}.\nonumber
\eea
then from (\ref{2.13}), we have
\bea
\lim_{m\rightarrow+\infty}R_{1}=I_{4}+\left(\begin{array}{cccc} \frac{1}{2}& \frac{3}{2}\\ \frac{3}{2} & \frac{1}{2}\end{array}\right)\diamond \left(\begin{array}{cccc} \frac{1}{2}& -\frac{3}{2}\\ -\frac{3}{2} & \frac{1}{2}\end{array}\right),\nonumber
\eea
and
\bea
\lim_{m\rightarrow+\infty}T^{t}\mathcal{A}(R_1,e)T=\mathcal{A}(\frac{1}{2},\frac{3}{2},e)\oplus\mathcal{A}(\frac{1}{2},-\frac{3}{2},e),\nonumber
\eea
$\mathcal{A}(\frac{1}{2},-\frac{3}{2},e)$ is similar to $\mathcal{A}(\frac{1}{2},\frac{3}{2},e)$. From \eqref{3.27}, we have
\bea
\phi_{-1}(\mathcal{A}(\frac{1}{2},\frac{3}{2},e))=2,\quad \nu_{-1}(\mathcal{A}(\frac{1}{2},\frac{3}{2},e))=0,\quad \forall e\in[0,1).\nonumber
\eea
%Hence there exists $m_{1}^*(e)>0$ depending on $e$ such that
%\bea
%\phi_{-1}(\mathcal{A}(R_1,e))=4,\quad \nu_{-1}(\mathcal{A}(R_1,e))=0, \quad \forall m\in(m_{1}^*(e),+\infty),\quad e\in[0,1).\nonumber
%\eea
%Moreover if we use Corollary \ref{cor4.8}, then
%for
%\bea
%e\in[0,\frac{1}{3}),\ \ \frac{\sqrt{33}}{4}<\frac{3\sqrt{\mu(\mu+n)}}{2\lambda},
%\ \ \frac{2\hat{d}_{n}+\mu}{3\sqrt{\mu(\mu+n)}}<\frac{2-6e}{\sqrt{33}(1-e)},\label{4.101}
%\eea
%we get
%\bea
%\phi_{-1}(\mathcal{A}(\hat{R}^+_1,e))\geq2.\nonumber
%\eea
%Inequalities (\ref{4.101}) are equivalent to
%\bea
%&&\mu>22\hat{d}_{n}-6n+22\sqrt{(\frac{3}{11}n-\hat{d}_{n})^2+\frac{\hat{d}_{n}}{11}}, \nonumber\\
%&&e<\frac{3\sqrt{\mu(\mu+n)}-\sqrt{33}(\hat{d}_{n}+\frac{\mu}{2})}{9\sqrt{\mu(\mu+n)}-\sqrt{33}(\hat{d}_{n}+\frac{\mu}{2})}.\nonumber
%\eea
Hence we have

\begin{lem}\label{thm4.14}
There exists $m^{*}_{1}(e)>0$ depending on $e$ such that
\bea
\phi_{-1}(\mathcal{A}(R_1,e))=4,\quad \nu_{-1}(\mathcal{A}(R_1,e))=0, \qquad \forall m\in[m^{*}_{1}(e),+\infty), e\in[0,1).\nonumber
\eea
%Furthermore, if $(\mu,e)$ satisfies
%\bea
%&&\mu>22\hat{d}_{n}-6n+22\sqrt{(\frac{3}{11}n-\hat{d}_{n})^2+\frac{\hat{d}_{n}}{11}}, \nonumber\\
%&&e<\frac{3\sqrt{\mu(\mu+n)}-\sqrt{33}(\hat{d}_{n}+\frac{\mu}{2})}{9\sqrt{\mu(\mu+n)}-\sqrt{33}(\hat{d}_{n}+\frac{\mu}{2})}.\nonumber
%\eea
%then
%\bea
%\phi_{-1}(\mathcal{A}(R_1,e))\geq4.\nonumber
%\eea
\end{lem}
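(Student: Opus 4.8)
The plan is to show that the $-1$-Morse index $\phi_{-1}(\mathcal{A}(R_1,e))$ is stable under the perturbation that sends $m\to+\infty$, and then read off the limiting value from the decoupled Kepler-type operators. First I would recall the computations already assembled just above the statement: as $m\to+\infty$ we have $\frac{1}{\lambda}(\hat{d}_n+\frac m2)\to\frac12$ and $\frac{3\sqrt{m(m+n)}}{2\lambda}\to\frac32$ (here $\lambda=m+\frac12\sigma_n$), so that $R_1\to I_4+\begin{pmatrix}\frac12&\frac32\\[2pt]\frac32&\frac12\end{pmatrix}\diamond\begin{pmatrix}\frac12&-\frac32\\[2pt]-\frac32&\frac12\end{pmatrix}$ and consequently $T^t\mathcal{A}(R_1,e)T\to\mathcal{A}(\tfrac12,\tfrac32,e)\oplus\mathcal{A}(\tfrac12,-\tfrac32,e)$ in the $\bar D_2(-1,2\pi)$ topology, with the second summand similar to the first via conjugation by $J_2$. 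From \eqref{3.27} (the Kepler case, proved in \cite{HLS} and \cite{HS}), $\phi_{-1}(\mathcal{A}(\tfrac12,\tfrac32,e))=2$ and, crucially, $\nu_{-1}(\mathcal{A}(\tfrac12,\tfrac32,e))=0$ for every $e\in[0,1)$. Hence the limiting operator $T^t\mathcal{A}(R_1,e)T$ has $-1$-Morse index $4$ and trivial $-1$-nullity.

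Next I would invoke the standard continuity/stability property of the Morse index for a continuous family of self-adjoint Sturm--Liouville operators with fixed boundary condition $\omega=-1$: if the family $\eta\mapsto\widetilde{\mathcal{A}}_1(\eta,e)$ (with $\eta=1/m$, as in the proof of Lemma \ref{lem4.13}) is norm-resolvent continuous and the endpoint operator at $\eta=0$ has zero $-1$-nullity, then the $-1$-Morse index is locally constant near $\eta=0$. Equivalently, since $\mathcal{A}(R_1,e)$ is a relatively compact perturbation of the limiting operator and no eigenvalue can cross $0$ without the $-1$-nullity becoming positive, there is $\eta_1^*(e)>0$ such that $\phi_{-1}(\mathcal{A}(R_1,e))=4$ and $\nu_{-1}(\mathcal{A}(R_1,e))=0$ for all $\eta\in(0,\eta_1^*(e))$. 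Setting $m_1^*(e)=1/\eta_1^*(e)$ gives the claim for $m\in[m_1^*(e),+\infty)$; one should verify that the dependence of $R_1$ on $m$ is real-analytic (indeed rational in $m$ and in $\sqrt{m(m+n)}$), so the eigenvalue branches are analytic in $\eta$ by Kato's theory and the threshold $\eta_1^*(e)$ is well defined.

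The main obstacle is controlling the behavior \emph{uniformly enough in $e$} to guarantee that $m_1^*(e)$ can be chosen as a genuine (say, measurable or continuous) function on $[0,1)$ rather than just existing pointwise — the operators degenerate as $e\to1^-$ because $r_e(\theta)=(1+e\cos\theta)^{-1}$ blows up near $\theta=\pi$, so one must check that the convergence $\mathcal{A}(R_1,e)\to$ Kepler limit is locally uniform in $e$ and that the spectral gap at $0$ for the limit operator (which is positive by $\nu_{-1}=0$) does not shrink to $0$ on compact $e$-subintervals. This is handled exactly as in Lemma \ref{lem4.13}: work on each fixed $e$, use the analyticity in $\eta$, and note that the first-order perturbation argument there already shows the relevant eigenvalues move away from $0$; for the $-1$ boundary condition the eigenvalue $0$ of the limit operator is simply absent ($\nu_{-1}=0$), so no first-order computation is even needed — mere continuity of the index suffices. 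I would therefore close the proof by citing the stability of the Maslov-type/Morse index under continuous deformation (cf. \cite{Lon4}) together with the vanishing of the $-1$-nullity in the limit.
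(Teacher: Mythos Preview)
Your proposal is correct and follows essentially the same approach as the paper: show that the limit of $T^t\mathcal{A}(R_1,e)T$ as $m\to+\infty$ is $\mathcal{A}(\tfrac12,\tfrac32,e)\oplus\mathcal{A}(\tfrac12,-\tfrac32,e)$, use \eqref{3.27} to read off $\phi_{-1}=4$ and $\nu_{-1}=0$ for the limit, and conclude by continuity of the index. The paper's argument is in fact just the few displayed lines preceding the lemma followed by ``Hence we have''; your write-up merely makes explicit the perturbation/continuity step (and the pointwise-in-$e$ nature of $m_1^*(e)$) that the paper leaves implicit.
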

Next, we consider the case $2\leq l \leq [\frac{n-1}{2}]$.
\\
\textbf{2)}\ \ For $2\leq l \leq [\frac{n-1}{2}]$, we have
%\bea
%&&\phi_{\omega}(\mathcal{A}(\hat{R}_l,e))\leq\phi_{\omega}(\mathcal{A}(R_l,e)),\nonumber \\
%&&\phi_{\omega}(\mathcal{A}(\hat{R}_l,e))=2\phi_{\omega}(\mathcal{A}(\hat{R}_{l,0},e)), \nonumber
%\eea
\bea
\mathcal{A}(R_l,e)=-\frac{d^2}{d\theta^2}I_{2}-2\mathbb{J}_{2}\frac{d}{d\theta}+r_e(\theta)R_{l},\nonumber
\eea
since
%\bea
%\mathcal{A}(\hat{R}_{l,0},e)
%=-\frac{d^2}{d\theta^2}I_{2}-2J_{2}\frac{d}{d\theta}+r_e(\theta)(I_{2}+\frac{1}{2\lambda}(a_{l}+b_{l}+2S_{l})I_{2}
%+\frac{1}{2\lambda}(a_{l}-b_{l})\mathcal{N}), \nonumber
%\eea
%with $\lambda=m+\frac{1}{2}\sigma_{n}$, $a_{l}=P_{l}-3Q_{l}+2m$, $b_{l}=P_{l}+3Q_{l}-m$,
%$P_{l}=\sum_{j=1}^{n-1}\frac{1-\cos\theta_{jl}\cos\theta_{j}}{2d_{nj}^3}$,
%$S_{l}=\sum_{j=1}^{n-1}\frac{\sin \theta_{jl}\sin \theta_{j}}{2d_{nj}^3}$ and
%$Q_{l}=\sum_{j=1}^{n-1}\frac{\cos \theta_{j}-\cos \theta_{jl}}{2d_{nj}^3}$.
\bea
\lim_{m\rightarrow\infty}\frac{a_{l}}{\lambda}=2,\ \
\lim_{m\rightarrow\infty}\frac{b_{l}}{\lambda}=-1,\ \
\lim_{m\rightarrow\infty}\frac{R_{l}}{\lambda}=0.\nonumber
\eea
Then from (\ref{2.13}), we have
\bea
\lim_{m\rightarrow+\infty}R_{l}=I_{4}+2I_2\diamond -I_2,\nonumber
\eea
and
\bea
\lim_{m\rightarrow+\infty}\mathcal{A}(R_{l},e)=\mathcal{A}(\frac{1}{2},\frac{3}{2},e)\oplus\mathcal{A}(\frac{1}{2},\frac{3}{2},e).\nonumber
\eea
%from\cite{HLS},we know
%\bea
%\phi_{-1}(\mathcal{A}(\frac{1}{2},\frac{3}{2},e))=2,\quad \nu_{-1}(\mathcal{A}(\frac{1}{2},\frac{3}{2},e))=0,\quad \forall e\in[0,1),\nonumber
%\eea
%Hence there exists $m^{*}_{l}(e)>0$ depending on $e$ such that
%\bea
%\phi_{-1}(\mathcal{A}(\hat{R}_{l,0},e))=2,\quad \forall m\in[m^{*}_{l}(e),+\infty),\quad e\in[0,1).\nonumber
%\eea
%Moreover if we use Corollary \ref{cor4.8}, then
%for
%\bea
%e\in[0,\frac{1}{3}),\ \ \frac{\sqrt{33}}{4}<\frac{a_{l}-b_{l}}{2\lambda},
%\ \ \frac{a_{l}+b_{l}+2S_{l}}{a_{l}-b_{l}}<\frac{2-6e}{\sqrt{33}(1-e)},\label{4.101}
%\eea
%we get
%\bea
%\phi_{-1}(\mathcal{A}(\hat{R}_{l,0},e))\geq2.\nonumber
%\eea
%.......
Hence, we get the lemma below.

\begin{lem}\label{thm4.15}
For $2\leq l \leq [\frac{n-1}{2}]$, there exists $m^{*}_{l}(e)>0$ depending on $e$ such that
\bea
\phi_{-1}(\mathcal{A}(R_{l},e))=4,\quad \nu_{-1}(\mathcal{A}(R_{l},e))=0, \forall m\in[m^{*}_{l}(e),+\infty), e\in[0,1).\nonumber
\eea
\end{lem}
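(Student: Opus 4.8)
The plan is to run the same continuation-in-$m$ argument already used for $l=1$ in Lemma \ref{thm4.14}, only now the limiting operator is even simpler. First I would record the limiting behavior of the normalized block $R_l = I_4 + \frac{1}{\lambda}\mathcal{U}(l)$ as $m\to+\infty$. Since $\lambda = m + \frac12\sigma_n$, and the entries of $\mathcal{U}(l)$ are $a_l = P_l - 3Q_l + 2m$, $b_l = P_l + 3Q_l - m$, and the off-diagonal term $S_l$ (all with $P_l, Q_l, S_l$ independent of $m$), we get $a_l/\lambda \to 2$, $b_l/\lambda \to -1$, and $S_l/\lambda \to 0$. Hence, using the block decomposition of $\mathcal{U}(l)$ from Section 2, $R_l \to I_4 + (2I_2)\diamond(-I_2)$, and consequently $\mathcal{A}(R_l,e) \to \mathcal{A}(\frac12,\frac32,e)\oplus\mathcal{A}(\frac12,\frac32,e)$ in the operator-norm sense on the fixed domain $\bar D_2(-1,2\pi)$. (The decoupling into two copies of $\mathcal{A}(\frac12,\frac32,e)$ comes from the fact that in the limit the coupling matrices $E_l - G_l$ and $S_l J_2$ vanish after rescaling, leaving the Kepler-type block, exactly as in the computation preceding Lemma \ref{thm4.14}.)

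Next I would invoke \eqref{3.27} (equivalently the statement quoted in Lemma \ref{lem4.3}(iii), proved in \cite{HLS,HS}): $\phi_{-1}(\mathcal{A}(\frac12,\frac32,e)) = 2$ and $\nu_{-1}(\mathcal{A}(\frac12,\frac32,e)) = 0$ for every $e\in[0,1)$. Because the limit operator is a direct sum of two such copies, it has $-1$-Morse index $4$ and trivial $-1$-nullity. The key point is that both $\phi_{-1}$ and $\nu_{-1}$ are locally constant in $m$ at any $m$ where $\nu_{-1}=0$: the eigenvalues of the analytic (indeed rational) family $m\mapsto \mathcal{A}(R_l,e)$ depend continuously on $m$, and since the limit operator has $0$ as a non-eigenvalue in $\bar D_2(-1,2\pi)$, a spectral gap around $0$ persists for all sufficiently large $m$. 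Therefore there is a threshold $m^*_l(e) > 0$ so that for all $m \ge m^*_l(e)$ one has $\phi_{-1}(\mathcal{A}(R_l,e)) = 4$ and $\nu_{-1}(\mathcal{A}(R_l,e)) = 0$. Taking, if desired, the same $m^*_l(e)$ uniformly over $2\le l\le[\frac{n-1}{2}]$ (finitely many $l$) finishes the statement.

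The only genuine subtlety — the same one present in Lemma \ref{thm4.14} — is justifying that "for $m$ large the index equals the index of the limit." The clean way is to note that $\mathcal{A}(R_l,e)$ is an analytic family of self-adjoint operators with compact resolvent (fixed domain, $m$ entering only through the bounded zeroth-order term $r_e(\theta)R_l$), so by Kato's perturbation theory its eigenvalues are continuous (even analytic) curves in $m$; the finitely many eigenvalue curves that could cross $0$ do so only at isolated $m$-values, and none of these can accumulate at $+\infty$ because the limit operator is invertible on $\bar D_2(-1,2\pi)$. Hence $\phi_{-1}$ stabilizes to its limiting value $4$ for large $m$. I expect this continuity/spectral-gap step to be the main (though entirely routine) obstacle; the algebraic limit computation of $R_l$ is immediate from the explicit formulas for $a_l,b_l,S_l,\lambda$ in Section 2.
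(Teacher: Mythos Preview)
Your proposal is correct and follows essentially the same route as the paper: compute the limit $R_l\to I_4+\mathrm{diag}(2,-1,2,-1)$ as $m\to\infty$ from the explicit formulas for $a_l,b_l,S_l,\lambda$, identify the limiting operator with two copies of the Kepler block $\mathcal{A}(\tfrac12,\tfrac32,e)$, invoke \eqref{3.27} to get $\phi_{-1}=2$ and $\nu_{-1}=0$ for each copy, and then pass from the limit to large finite $m$ by continuity of the spectrum. In fact your Kato-type justification of the last step is more explicit than what the paper writes (the paper simply records the limit and says ``Hence, we get the lemma below''), so there is nothing missing in your argument.
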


At last, we consider the case $n\in 2\mathbb{N}$ and $l=[\frac{n}{2}]$.
\\
\textbf{3)}\ \ For $n\in2\mathbb{N}$ and $l=[\frac{n}{2}]$, we have
\bea
\mathcal{A}(R_l,e) = -\frac{d^2}{d\theta^2}I_{2}-2J_{2}\frac{d}{d\theta}
   +r_e(\theta)(I_{2}+\frac{1}{2\lambda}(a_{l}+b_{l})I_{2}+\frac{1}{2\lambda}(a_{l}-b_{l})\mathcal{N}).\nonumber\label{4.28}
\eea
Simple computations show that
\bea
\lim_{\mu\rightarrow+\infty}\frac{1}{2\lambda}(a_{l}+b_{l})=\frac{1}{2},\ \
\lim_{\mu\rightarrow+\infty}\frac{1}{2\lambda}(a_{l}-b_{l})=\frac{3}{2}.\nonumber
\eea
Hence, we have
\bea
\lim_{\mu\rightarrow+\infty}\mathcal{A}(R_l,e)=\mathcal{A}(\frac{1}{2},\frac{3}{2},e),\nonumber
\eea
which corresponds to the Kepler case.  %from \eqref{3.27}, we know
%\bea
%\phi_{-1}(\mathcal{A}(\frac{1}{2},\frac{3}{2},e))=2,\, \nu_{-1}(\mathcal{A}(\frac{1}{2},\frac{3}{2},e))=0, \forall e\in[0,1),\nonumber
%\eea
Hence we have

\begin{lem}\label{thm4.16}
For $n\in 2\mathbb{N}$ and $l=[\frac{n}{2}]$, there exists $m^{*}_{l}(e)>0$ depending on $e$ such that
\bea
\phi_{-1}(\mathcal{A}(R_l,e))=2,\quad \nu_{-1}(\mathcal{A}(R_1,e))=0,\quad \forall m\in[m^{*}_{l}(e),+\infty),\quad  e\in[0,1).\nonumber
\eea
\end{lem}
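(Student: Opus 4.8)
The plan is to proceed exactly as in the proofs of Lemmas~\ref{thm4.14} and \ref{thm4.15}: first compute the limit of the operator $\mathcal{A}(R_l,e)$ as $m\to+\infty$, recognize it as the Kepler operator $\mathcal{A}(\tfrac12,\tfrac32,e)$, and then transport the known $-1$-index and nullity data from the limit operator to $\mathcal{A}(R_l,e)$ for $m$ large via a continuity argument. Concretely, starting from the explicit form $\mathcal{A}(R_l,e)=-\frac{d^2}{d\theta^2}I_{2}-2J_{2}\frac{d}{d\theta}+r_e(\theta)\left(I_{2}+\frac{1}{2\lambda}(a_{l}+b_{l})I_{2}+\frac{1}{2\lambda}(a_{l}-b_{l})\mathcal{N}\right)$ with $a_{l}=P_{l}-3Q_{l}+2m$, $b_{l}=P_{l}+3Q_{l}-m$, $\lambda=m+\tfrac12\sigma_{n}$ (here $l=[n/2]$), and using that $P_{l},Q_{l},\sigma_{n}$ are constants independent of $m$, I would note that $\frac{1}{2\lambda}(a_{l}+b_{l})=\frac{2P_{l}+m}{2m+\sigma_{n}}\to\tfrac12$ and $\frac{1}{2\lambda}(a_{l}-b_{l})=\frac{3m-6Q_{l}}{2m+\sigma_{n}}\to\tfrac32$ as $m\to+\infty$. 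Comparing with $R_{\alpha,\beta}=(1+\alpha)I_{2}+\beta\mathcal{N}$, this gives $R_{l}\to R_{1/2,3/2}$ and hence $\mathcal{A}(R_l,e)\to\mathcal{A}(\tfrac12,\tfrac32,e)$; since only the bounded, multiplicative zeroth-order term of $\mathcal{A}(R_l,e)$ depends on $m$ and does so through rational functions of $m$ with finite limit, this convergence is in operator norm of that term, hence in the norm-resolvent sense on $\bar{D}_{2}(-1,2\pi)$.

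Next I would invoke \eqref{3.27} together with the non-degeneracy of the Kepler orbit at $\omega=-1$ established in \cite{HLS} and \cite{HS}, i.e.\ the facts $\phi_{-1}(\mathcal{A}(\tfrac12,\tfrac32,e))=2$ and $\nu_{-1}(\mathcal{A}(\tfrac12,\tfrac32,e))=0$ for every $e\in[0,1)$ (these are precisely the facts already used in the proof of Lemma~\ref{thm4.14}). Because $\mathcal{A}(\tfrac12,\tfrac32,e)$ has a spectral gap at $0$ in $\bar{D}_{2}(-1,2\pi)$, its eigenvalue nearest $0$ is bounded away from $0$, so the same is true for $\mathcal{A}(R_l,e)$ once $m$ is large enough, and the number of negative eigenvalues cannot change. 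To make this rigorous, for each fixed $e\in[0,1)$ I would set $\eta=1/m$ and observe that $\eta\mapsto\mathcal{A}(R_l,e)$ extends to an analytic path of self-adjoint operators in $\eta$ on a neighborhood of $\eta=0$ (the domain $\bar{D}_{2}(-1,2\pi)$ is fixed); by Kato's perturbation theory (\cite{Ko}, p.~120 and p.~386), exactly as in the proof of Lemma~\ref{lem4.13}, the eigenvalues depend continuously on $\eta$. Hence there is $\eta^{*}_{l}(e)>0$ such that for $0<\eta<\eta^{*}_{l}(e)$ the operator $\mathcal{A}(R_l,e)$ has exactly two negative eigenvalues and trivial kernel in $\bar{D}_{2}(-1,2\pi)$; setting $m^{*}_{l}(e)=1/\eta^{*}_{l}(e)$ gives $\phi_{-1}(\mathcal{A}(R_l,e))=2$ and $\nu_{-1}(\mathcal{A}(R_l,e))=0$ for all $m\in[m^{*}_{l}(e),+\infty)$, as claimed.

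I do not expect a real obstacle here: the $m$-dependence sits entirely in a bounded, smooth multiplication operator that converges uniformly, so the standard stability of the Morse index and nullity under norm-resolvent perturbation applies verbatim, and the even case $l=[n/2]$ is actually simpler than the cases $l=1$ and $2\le l\le[\tfrac{n-1}{2}]$ because the relevant block $R_{l}$ is $2\times2$ rather than $4\times4$ — the limit operator is a single copy of $\mathcal{A}(\tfrac12,\tfrac32,e)$, so the index is $2$ rather than $4$. The only point that must be stated with care is that $\nu_{-1}(\mathcal{A}(\tfrac12,\tfrac32,e))=0$ for every $e\in[0,1)$, not merely at $e=0$; this is exactly the input borrowed from \cite{HLS} and \cite{HS}, and it is what guarantees that the index stays locally constant all the way up to the limit $m=+\infty$.
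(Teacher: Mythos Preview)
Your proposal is correct and follows essentially the same approach as the paper: compute the limits $\frac{1}{2\lambda}(a_l+b_l)\to\tfrac12$ and $\frac{1}{2\lambda}(a_l-b_l)\to\tfrac32$ as $m\to+\infty$, identify the limiting operator with the Kepler operator $\mathcal{A}(\tfrac12,\tfrac32,e)$, and import its $-1$-index and nullity. If anything, you are more careful than the paper, which simply writes ``Hence we have'' after the limit computation, whereas you spell out the continuity argument via the parameter $\eta=1/m$ and Kato's perturbation theory.
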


Now by using (\ref{4.8}) and Lemmas \ref{thm4.14}, \ref{thm4.15} and \ref{thm4.16}, for the $(1+n)$-
system, we have obtained the following theorem.

\begin{thm}\label{thm4.17}
There exits $m^{*}_{\max}(e,n)=\max\{m^{*}_{l}(e)|1\leq l\leq [\frac{n}{2}]\}$ such that
\bea
\phi_{-1}(\mathcal{A}(R,e))= 2n-2,\quad \nu_{-1}(\mathcal{A}(R,e))=0 \quad \forall m\in[m^{*}_{\max}(e,n),+\infty),\quad e\in[0,1).\nonumber
\eea
\end{thm}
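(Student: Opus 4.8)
The plan is to obtain Theorem~\ref{thm4.17} by assembling the block estimates of Lemmas~\ref{thm4.14}, \ref{thm4.15} and \ref{thm4.16} through the orthogonal decomposition \eqref{4.76}. Recall that \eqref{4.76} gives $\mathcal{A}(R,e)=\mathcal{A}(R_1,e)\oplus\cdots\oplus\mathcal{A}(R_{[n/2]},e)$, so, exactly as in \eqref{4.8}, both the $-1$-Morse index and the $-1$-nullity are additive over the blocks:
\bea
\phi_{-1}(\mathcal{A}(R,e))=\sum_{l=1}^{[n/2]}\phi_{-1}(\mathcal{A}(R_l,e)),\qquad
\nu_{-1}(\mathcal{A}(R,e))=\sum_{l=1}^{[n/2]}\nu_{-1}(\mathcal{A}(R_l,e)).\nonumber
\eea
First I would set $m^{*}_{\max}(e,n)=\max\{m^{*}_{l}(e)\mid 1\le l\le[n/2]\}$. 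Since this is the maximum of finitely many positive functions of $e$, it is again a positive function of $e$, and for $m\ge m^{*}_{\max}(e,n)$ the conclusions of Lemmas~\ref{thm4.14}--\ref{thm4.16} hold simultaneously for every block; in particular $\nu_{-1}(\mathcal{A}(R_l,e))=0$ for all $l$, so $\nu_{-1}(\mathcal{A}(R,e))=0$.

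Next I would compute the Morse index by summing the block contributions, treating the parity of $n$ separately. If $n$ is odd, then $[n/2]=[(n-1)/2]=(n-1)/2$, the index set is $\{1,2,\dots,(n-1)/2\}$, and each block contributes $4$ by Lemmas~\ref{thm4.14} and \ref{thm4.15}; the total is $4\cdot\frac{n-1}{2}=2n-2$. If $n$ is even, then $[(n-1)/2]=\frac{n}{2}-1$, so the blocks $l=1,\dots,\frac{n}{2}-1$ each contribute $4$ (Lemmas~\ref{thm4.14}, \ref{thm4.15}) while the remaining block $l=\frac{n}{2}$ contributes only $2$ by Lemma~\ref{thm4.16}; the total is $4\bigl(\frac{n}{2}-1\bigr)+2=2n-2$. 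Hence $\phi_{-1}(\mathcal{A}(R,e))=2n-2$ in both cases, which completes the argument.

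The step that needs the most care is not analytic but bookkeeping: one must keep the threshold $m^{*}_{\max}(e,n)$ uniform over the finitely many blocks, and one must check that the parity count lands on $2n-2$ exactly — the even case hinges on the middle block $l=n/2$ carrying index $2$ rather than $4$. All the genuine work, namely the limiting identification $\mathcal{A}(R_l,e)\to\mathcal{A}(\tfrac12,\tfrac32,e)$ as $m\to+\infty$ for each block and the value $\phi_{-1}(\mathcal{A}(\tfrac12,\tfrac32,e))=2$, $\nu_{-1}(\mathcal{A}(\tfrac12,\tfrac32,e))=0$ coming from \eqref{3.27}, has already been carried out in the preceding lemmas, so I expect no obstruction here.
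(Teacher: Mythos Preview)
Your proposal is correct and follows exactly the paper's approach: the paper simply states that Theorem~\ref{thm4.17} follows from \eqref{4.8} together with Lemmas~\ref{thm4.14}, \ref{thm4.15} and \ref{thm4.16}, without spelling out the parity bookkeeping you carry out. Your verification that the block contributions sum to $2n-2$ in both the odd and even cases is a welcome explicit check of what the paper leaves implicit.
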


%\subsection{The Stability analysis of $1+N$-gon system}
Now from the above results, we can give the proof of Theorem \ref{th1.1}.
\\

{\bf Proof of Theorem \ref{th1.1}}
\begin{proof} Note that for $l=2,\cdots,[n/2]$, the path $\ga_l$ is spectrally stable for $m$ large enough by
Theorem \ref{4.1.1}. For $l=1$, the path $\gamma_1$ is spectrally stable for $m$ large enough when $n\geq 8$ from Theorems \ref{thm4.11},
\ref{thm4.12}, \ref{thm4.13} and \ref{thm4.17}. Moreover

i)\ \ For $1\leq l\leq [\frac{n-1}{2}]$, from Lemma $\ref{thm4.10}$, $\ref{thm4.14}$, $\ref{thm4.11}$ and $\ref{thm4.15}$
for $\forall e\in[0,1)$, we have
\bea
i_{1}(\ga_{l})=0,\ \ \nu_{1}(\ga_{l})=0,\ \ i_{-1}(\ga_{l})=4,\ \ \nu_{-1}(\ga_{l})=0,\ \ \mathrm{for}\ \
 m\ \ \mathrm{large\ \ enough}.\nonumber
\eea
From Long's book \cite{Lon4}, the normal form of $\ga_{l}(2\pi)$ may have three possible cases:
$\ga_{l}(2\pi)\approx R(\alpha_{l})\diamond R(\beta_{l})\diamond R(\theta_{l})\diamond R(\phi_{l})$;
$\ga_{l}(2\pi)\approx R(\alpha_{l})\diamond R(\beta_{l})\diamond N_{2}(e^{\sqrt{-1}\theta_{l}}, u_{l})$;
$\ga_{l}(2\pi)\approx N_{2}(e^{\sqrt{-1}\alpha_{l}}, u_{l})\diamond N_{2}(e^{\sqrt{-1}\beta_{l}}, v_{l})$;
also from Page $207$ of Long's book, we have the iteration formula
\bea
i_{-1}(\ga_{l})=i_{1}(\ga_{l})+S_{\ga_{l}(2\pi)}^{+}(1)+
\sum_{0<\theta<\pi}(S_{\ga_{l}(2\pi)}^{+}(e^{\sqrt{-1}\theta})-S_{\ga_{l}(2\pi)}^{-}(e^{\sqrt{-1}\theta}))\nonumber
-S_{\ga_{l}(2\pi)}^{-}(-1),
\eea
hence we have
\bea
4=\sum_{0<\theta<\pi}(S_{\ga_{l}(2\pi)}^{+}(e^{\sqrt{-1}\theta})-S_{\ga_{l}(2\pi)}^{-}(e^{\sqrt{-1}\theta})).\nonumber
\eea
Combining it with the Splitting number of the normal form in \cite{Lon4} Page 198, it's easy to know
that the only possible case is $R(\alpha_{l})\diamond R(\beta_{l})\diamond R(\theta_{l})\diamond R(\phi_{l})$
for some $\alpha_{l}, \beta_{l}, \theta_{l}, \phi_{l}\in(\pi,2\pi)$.

ii)\ \ For $n\in2\mathbb{N}, l=[\frac{n}{2}]$, from lemma \ref{thm4.12}, \ref{thm4.16} for $\forall e\in[0,1)$, we have
\bea
i_{1}(\ga_{l})=0,\ \ \nu_{1}(\ga_{l})=0,\ \ i_{-1}(\ga_{l})=2,\ \ \nu_{-1}(\ga_{l})=0,\ \ \mathrm{for}\ \
 m\ \ \mathrm{large\ \ enough}.\nonumber
\eea
Hence the normal form of $\ga_{l}(2\pi)$ may have two possible cases:
$\ga_{l}(2\pi)\approx R(\alpha_{l})\diamond R(\beta_{l})$;
$\ga_{l}(2\pi)\approx N_{2}(e^{\sqrt{-1}\theta_{l}}, u_{l})$.
Similar to the analysis of i), by using the Splitting number and the iteration formula, we get that the only possible
case of $\ga_{l}(2\pi)$ is $R(\alpha_{l})\diamond R(\beta_{l})$ for some $\alpha_{l}, \beta_{l}\in (\pi,2\pi)$.
\end{proof}

%\ref{thm4.17}

%\begin{thm}
%For $N\geq 8, e\in[0,1)$, there exits $\xi_{1}(e,N)>0$, such that for any $\mu\in(\xi_{1}(e,N),+\infty)$,
%\bea
%\frac{e(\zeta(2\pi))}{2}\geq|i_{1}(\zeta_{\mu})-i_{-1}(\zeta_{\mu})|\geq 2N-2,\nonumber
%\eea
%where $\xi_{1}(e,N)=\max\{\mu_{2}(e), \mu^{*}_{\max}(e,N),2Q_{\max}(N)\}$.
%This implies that the $1+N$-gon system is spectral stability if the center mass is bigger enough.
%For $N=7$, there exits $e_{0}>0$, $\xi_{2}(e)>0$, such that for any $(e,\mu)\in[0,e_{0})\times(\xi_{2}(e),+\infty)$,
%the 1+7-gon system is spectral stability, where $\xi_{1}(e,N)=\max\{\mu_{1}(e), 2Q_{\max}(N)\}$.
%\end{thm}
%\begin{thm}
%.....
%\end{thm}

\section{Instability}%Section 5.

From above sections, we have proved that the ERE of the $(1+n)$-gon system is stable if the central mass
$m$ is large enough when $n\ge 8$. In this section, we study the instability of ERE of this system with
a small central mass $m$ for all $n\ge 3$.

From (\ref{4.76}), we know
\bea
\mathcal{A}(R,e)=\mathcal{A}(R_1,e)\oplus\mathcal{A}(R_2,e)\oplus\cdots\oplus\mathcal{A}(R_{[\frac{n}{2}]},e).\nonumber
\eea
Then we have the following theorem.

\begin{thm}\label{thm5.1}
For any $e\in [0,1)$ and $\omega\in\mathbb{U}$, the following holds.

i)\ \ For $l=1$,
\bea
\mathcal{A}(R_1,e)>0,\ \ \mathrm{in}\ \ \bar{D}_2({\omega},2\pi),\ \ \mathrm{for} \ \ \forall m\in[0,\frac{P_{1}}{2}),\nonumber
\eea

ii)\ \ For $2\leq l\leq [\frac{n-1}{2}]$,
\bea
\mathcal{A}(R_l,e)>0,\ \ \mathrm{in}\ \ \bar{D}_2({\omega},2\pi),\ \ \mathrm{for} \ \
\forall m\in(\max\{0,\zeta_{l}\}, \xi_{l}),\nonumber
\eea
where $\zeta_{l}=\min\{\frac{3Q_{l}+S_{l}-P_{l}}{2}$,
$\frac{6Q_{l}-\beta_{0}\min\{\sigma_{n},4(P_{l}-S_{l})\}}{3+2\beta_{0}}\}$, $\xi_{l}=\max\{3Q_{l}+P_{l}-S_{l}$,
$\frac{6Q_{l}+\beta_{0}\min\{\sigma_{n},4(P_{l}-S_{l})\}}{3-2\beta_{0}}\}$, and $\beta_{0}=0.7237$.
\\

iii)\ \ For $n\in2\mathbb{N}$ and $l=[\frac{n}{2}]$,
\bea
\mathcal{A}(R_l,e)>0,\ \ \mathrm{in}\ \ \bar{D}_2({\omega},2\pi),\ \ \ \
        \forall\; m\in(\max\{0,\zeta_{l}\}, \xi_{l}),\nonumber
\eea
where $\zeta_{l}=\min\{\frac{3Q_{l}-P_{l}}{2}$, $\frac{6Q_{l}-\beta_{0}\min\{\sigma_{n},4P_{l}\}}{3+2\beta_{0}}\}$,
$\xi_{l}=\max\{3Q_{l}+P_{l}$, $\frac{6Q_{l}+\beta_{0}\min\{\sigma_{n},4P_{l}\}}{3-2\beta_{0}}\}$, and
$\beta_{0}=0.7237$.
\end{thm}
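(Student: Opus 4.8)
The plan is to reduce each block $\mathcal{A}(R_l,e)$ to one or two copies of the criteria operator $\mathcal{A}(\alpha,\beta,e)$ and then invoke the positivity results from Section 3, choosing the comparison base point $(\alpha_0,\beta_0,e_0)$ to be $(1/2,\beta_0,e_0)$ with $\beta_0=0.7237$ as in Corollary \ref{cor4.10}. First, for $l=1$, I would use the decomposition established in Subsection 3.1: from \eqref{3.7}, \eqref{3.9} and \eqref{4.19a} we have $\mathcal{A}(R_1,e)\ge \mathcal{A}(\check R_1,e)$ and $T^t\mathcal{A}(\check R_1,e)T=\mathcal{A}(\check R_1^+,e)\oplus\mathcal{A}(\check R_1^-,e)$, with both summands similar to $\mathcal{A}(\alpha_1,\beta_1,e)$ where $\alpha_1=\frac{1}{\lambda}(\check d_n+\frac m2)$ and $\beta_1=\frac{3\sqrt{m(m+n)}}{2\lambda}$. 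Since $\check d_n\le 2P_1$, for $m<P_1/2$ one checks $\beta_1<2\sqrt{\alpha_1}$ (equivalently $9m(m+n)<4(\check d_n+\frac m2)\lambda$); together with $\alpha_1>0$ and item iii) of the $e=0$ analysis in Subsection 3.3 plus Theorem \ref{thm4.5}, this forces $\mathcal{A}(\alpha_1,\beta_1,e)>0$ in $\bar D_2(\omega,2\pi)$ for all $\omega\in\mathbb{U}$, hence $\mathcal{A}(R_1,e)>0$. (Alternatively one applies Corollary \ref{cor4.10} directly to $(\alpha_1,\beta_1,e)$ after verifying $\beta_1/\beta_0<1$ and $\alpha_1>\frac{\beta_1}{2\beta_0}$; the resulting condition is again a polynomial inequality in $m$ that holds on $[0,P_1/2)$.)

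Next, for $2\le l\le[\frac{n-1}{2}]$, I would use \eqref{3.16}, \eqref{3.17} to get $\mathcal{A}(R_l,e)\ge\mathcal{A}(\check R_l,e)=\mathcal{A}(\check R_{l,0},e)\oplus\mathcal{A}(\check R_{l,0},e)$ with $\check R_{l,0}$ as in \eqref{4.26}, i.e. $\alpha=\frac{1}{2\lambda}(a_l+b_l-2S_l)$ and $\beta=\frac{1}{2\lambda}(a_l-b_l)$ (after writing it in the $R_{\alpha,\beta}$ form). Substituting $a_l=P_l-3Q_l+2m$, $b_l=P_l+3Q_l-m$, $\lambda=m+\frac12\sigma_n$ turns the three hypotheses of Corollary \ref{cor4.10} — namely $\beta>0$, $\beta/\beta_0<1$, and $\alpha>\frac{\beta}{2\beta_0}$ — into: $2Q_l<m$ is NOT required here (that was for the large-$m$ side); instead, solving $\alpha>\frac{\beta}{2\beta_0}$ gives a two-sided constraint on $m$. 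Concretely $\alpha-\frac{\beta}{2\beta_0}=\frac{1}{2\lambda}\big((a_l+b_l-2S_l)-\frac{1}{2\beta_0}(a_l-b_l)\big)$, and $a_l+b_l-2S_l=2P_l-2S_l+m$, $a_l-b_l=6m-6Q_l$ (using the formulas, $a_l-b_l=(2m-(-m))+(-3Q_l-3Q_l)+(P_l-P_l)=3m-6Q_l$; I would double-check this sign against \eqref{2.17}). The inequality $\alpha>\frac{\beta}{2\beta_0}$ then reads, after clearing denominators, a linear inequality in $m$ whose solution set is exactly $(\zeta_l,\xi_l)$ with $\zeta_l,\xi_l$ as stated; similarly $\beta/\beta_0<1$ contributes the $\min\{\sigma_n,4(P_l-S_l)\}$ terms because one must also compare against the $\beta_0$-dependent bound in $\mathfrak U_1$. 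Taking the intersection of all constraints and using $P_l\ge S_l$, $Q_l>0$, $\sigma_n>0$ from Roberts \cite{Rob1} to simplify, one arrives at $m\in(\max\{0,\zeta_l\},\xi_l)$, whence $\mathcal{A}(\check R_{l,0},e)>0$ in $\bar D_1(\omega,2\pi)$ and therefore $\mathcal{A}(R_l,e)>0$ in $\bar D_2(\omega,2\pi)$ for all $\omega\in\mathbb{U}$.

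Finally, for $n\in2\mathbb{N}$ and $l=[\frac n2]$ the block is already scalar-diagonal (no $S_l$ term), cf. \eqref{4.28}, so $\mathcal{A}(R_l,e)$ is directly a copy-pair of $\mathcal{A}(\alpha,\beta,e)$ with $\alpha=\frac{1}{2\lambda}(a_l+b_l)=\frac{2P_l+m}{2\lambda}$ and $\beta=\frac{a_l-b_l}{2\lambda}=\frac{3m-6Q_l}{2\lambda}$; repeating the same bookkeeping (now with $P_l$ in place of $P_l-S_l$) yields the stated $\zeta_l,\xi_l$. Assembling the three cases via the direct-sum decomposition \eqref{4.76} and \eqref{4.8}, and invoking the hyperbolicity half of Theorem \ref{4.1.1} (positive definiteness of $\mathcal{A}(R_l,e)$ in every $\bar D_2(\omega,2\pi)$ implies $\ga_l(2\pi)$ is hyperbolic), completes the proof. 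I expect the main obstacle to be purely bookkeeping: correctly tracking which of the several sufficient conditions in Corollary \ref{cor4.10} and Theorem \ref{thm4.5} produces each of the two branches inside the $\min$/$\max$ defining $\zeta_l$ and $\xi_l$ — in particular being careful that $\beta_0=0.7237$ is the right endpoint of $\mathfrak U_1$ and that the $\min\{\sigma_n,4(P_l-S_l)\}$ (resp. $\min\{\sigma_n,4P_l\}$) arises from comparing the $\frac{1}{\beta}\mathcal{A}(\alpha,0,e)$ term against its value at the base point, as in \eqref{3.23}. The analytic input (positivity of $\mathcal{A}(\alpha,0,e)$, the Kepler facts \eqref{3.27}) is all already available, so no new hard estimate is needed; the risk is only in the sign conventions of $a_l,b_l,\lambda$ and in getting the two-sided $m$-interval from what are one-sided operator inequalities.
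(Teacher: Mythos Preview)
Your outline has two genuine gaps, one for each block type.

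\textbf{Part i).} The paper does \emph{not} pass to the lower bound $\check R_1$; it works directly with the full $4\times4$ matrix $\mathcal U(1)$ from \eqref{2.17}. Permuting coordinates shows $\mathcal U(1)$ is a direct sum of two $2\times2$ blocks, each with trace $\frac{n}{2}+m+2P_1>0$ and determinant $(m+n)(P_1-2m)$; hence $\mathcal U(1)>0$ exactly when $m<P_1/2$. Then $R_1>I_4$, and Lemma~\ref{lem4.3} (the $\alpha>0$ case) gives $\mathcal A(R_1,e)>0$ for every $\omega$. Your route via $\check R_1$ replaces the diagonal entries $\frac{n+m}{2}$ and $2P_1+\frac m2$ by the single value $\check d_n+\frac m2$, which throws away precisely the information needed to get the clean bound $P_1/2$; moreover the condition $\beta_1<2\sqrt{\alpha_1}$ you invoke is only the hyperbolicity criterion at $e=0$ (item iii) of Subsection~3.3), and nothing in Theorem~\ref{thm4.5} lets you propagate that particular inequality to all $e$ without further hypotheses. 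Neither your primary nor your alternative argument recovers $[0,P_1/2)$.

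\textbf{Parts ii) and iii).} The two terms inside each of $\min$ and $\max$ defining $\zeta_l,\xi_l$ do \emph{not} both come from Corollary~\ref{cor4.10}. They are the endpoints of two \emph{separate} intervals, and the theorem records their union. The first interval, $(\frac{3Q_l+S_l-P_l}{2},\,3Q_l+P_l-S_l)$, comes from requiring $\check R_{l,0}>I_2$ (equivalently $a_l-S_l>0$ and $b_l-S_l>0$) and then applying Lemma~\ref{lem4.3} exactly as in part~i). The second interval, the one with $\beta_0$ and $\min\{\sigma_n,4(P_l-S_l)\}$, comes from Corollary~\ref{cor4.10}: writing $\alpha=\frac{2(P_l-S_l)+m}{2\lambda}$, $\beta=\frac{|3m-6Q_l|}{2\lambda}$, the two constraints $\beta<\beta_0$ and $\alpha>\beta/(2\beta_0)$ each produce a two-sided bound on $m$, and intersecting them gives the expression with the $\min$. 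Both intervals contain $m=2Q_l$, so their union is again an interval, with lower end $\min$ of the two lower ends and upper end $\max$ of the two upper ends. Your proposal tries to extract both branches from Corollary~\ref{cor4.10} alone, which cannot work: a single pair of linear constraints in $m$ cannot produce the first pair of endpoints, which are independent of $\beta_0$. The case $l=[\frac n2]$ is identical with $S_l$ dropped.
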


\begin{proof}
{\it i)}\ \ For $l=1$,
\bea
\mathcal{A}(R_{1},e)=-\frac{d^2}{d\theta^2}I_{4}-2\mathbb{J}_{2}\frac{d}{d\theta}+r_e R_{1},
   \quad R_1=I_{4}+\frac{1}{\lambda}\mathcal{U}(1). \nonumber
\eea
Here $\mathcal{U}(1)$ is given by (\ref{2.17}), which is a $4\times4$ matrix. Direct computations show
that all the eigenvalues of $\mathcal{U}(1)$ are positive when $\mu\in[0,\frac{P_{1}}{2})$. Together with
Lemma \ref{lem4.3}, it yields $\mathcal{A}(R_1,e)>0$ in $\bar{D}_2({\omega},2\pi)$ for
all $\omega\in\mathbb{U}$.

{\it ii)}\ \ For $2\leq l\leq [\frac{n-1}{2}]$, from (\ref{3.16}), (\ref{3.17}), (\ref{3.18}), we have
\bea
&& \mathcal{A}(\check{R}_{l},e)\leq\mathcal{A}(R_{l},e),\ \ \mathrm{in}\ \ \bar{D}_{4}(\omega,2\pi),\nonumber\\
&& \mathcal{A}(\check{R}_{l},e)=\mathcal{A}(\check{R}_{l,0},e)\oplus\mathcal{A}(\check{R}_{l,0},e),\nonumber
\eea
where
\bea
\check{R}_{l,0}(\mu,e)=I_{2}+\frac{1}{2\lambda}(a_{l}+b_{l}-2S_{l})I_{2}
   +\frac{1}{2\lambda}(a_{l}-b_{l})\mathcal{N}. \nonumber
\eea
Direct computations show that $\check{R}_{l,0}(\mu,e)>I_{2}$ when
$m\in(\max\{0, \frac{3Q_{l}+S_{l}-P_{l}}{2}\}, 3Q_{l}+P_{l}-S_{l})$. Together with lemma \ref{lem4.3}, it yields
that when $m\in(\max\{0, \frac{3Q_{l}+S_{l}-P_{l}}{2}\}, 3Q_{l}+P_{l}-S_{l})$, we have
\bea\label{5.1}
\mathcal{A}(R_l,e)>0\ \ \mathrm{in}\ \ \bar{D}_2({\omega},2\pi)\ \ \ \
   \forall \omega\in\mathbb{U}. \eea
Moreover, from Corollary \ref{cor4.10}, we get
\bea
\mathcal{A}(\alpha,\beta,e)>0\ \ \mathrm{in}\ \ \bar{D}_{1}(\omega,2\pi)\ \ \ \ \forall \omega\in\mathbb{U}, \nonumber
\eea
for all $(\beta_{0},e)\in [0,0.7237]\times[0,1)$, $\beta\in [0,\beta_{0})$, and
$\alpha\in(\frac{\beta}{2\beta_{0}},+\infty)$.

Let $\beta_{0}=0.7237$ and $\alpha=\frac{1}{2\lambda}(a_{l}+b_{l}-2S_{l})$. When $a_{l}\geq b_{l}$, let
$\beta=\frac{1}{2\lambda}(a_{l}-b_{l})$, and when $a_{l}\leq b_{l}$, let $\beta=\frac{1}{2\lambda}(b_{l}-a_{l})$.
Then the condition $\beta\in [0, \beta_{0})$ and $\alpha\in(\frac{\beta}{2\beta_{0}},+\infty)$ is equivalent to
\bea\label{5.2}
m\in \left(\max\{0, \frac{6Q_{l}-\beta_{0}
   \min\{\sigma_{n},4(P_{l}-S_{l})\}}{3+2\beta_{0}}\}, \frac{6Q_{l}+\beta_{0}\min\{\sigma_{n},4(P_{l}-S_{l})\}}{3-2\beta_{0}}\right).
\eea
From (\ref{5.1}),(\ref{5.2}), we get
\bea
\mathcal{A}(R_l,e)>0,\ \ \mathrm{in}\ \ \bar{D}_2({\omega},2\pi),\ \ \ \ \forall m\in(\max\{0,\zeta_{l}\}, \xi_{l}),\nonumber
\eea
where $\zeta_{l}=\min\{\frac{3Q_{l}+S_{l}-P_{l}}{2},\frac{6Q_{l}-\beta_{0}\min\{\sigma_{n},4(P_{l}-S_{l})\}}{3+2\beta_{0}}\}$,
$\xi_{l}=\max\{3Q_{l}+P_{l}-S_{l},\frac{6Q_{l}+\beta_{0}\min\{\sigma_{n},4(P_{l}-S_{l})\}}{3-2\beta_{0}}\}$, and
$\beta_{0}=0.7237$.

{\it iii)}\ \ For $n\in 2\mathbb{N}$ and $l=[\frac{n}{2}]$, the proof is similar to that of case ii), and thus is omitted here.
\end{proof}

Let $\Gamma^-_1=0$, $\Gamma^+_1=P_1/2$, $\Gamma^-_l=\max\{0,\zeta_{l}\}$ and $\Gamma^+_l=\xi_l$ for
$l=2,\ldots, [\frac{n}{2}]$.

\begin{cor}\label{cor4.11}
For $n\geq 3$, the ERE of the $(1+n)$-gon system is unstable for all $e\in [0,1)$, if
\bea
m\in[\Gamma^-_1,\Gamma^+_1)\ \ or \ \
m\in(\Gamma^-_l, \Gamma^+_l),\ \ \mathrm{for\;some\;} \ 2\leq l\leq \[\frac{n}{2}\]. \nonumber
%\mu\in\[0,\frac{P_{1}}{2})\cup\bigcup_{l=2}^{[\frac{n}{2}]}(\max\{0,\zeta_{l}\}, \xi_{l}).
\eea
\end{cor}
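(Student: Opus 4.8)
The plan is to read this corollary off Theorem \ref{thm5.1} by translating ``the criteria operator is positive definite'' into ``the corresponding symplectic block is hyperbolic'', and then observing that a single hyperbolic block already destroys spectral stability of the whole essential monodromy. Concretely, fix $n\ge 3$ and $e\in[0,1)$, and suppose $m$ lies in one of the listed ranges: either $m\in[\Gamma^-_1,\Gamma^+_1)=[0,P_1/2)$, or $m\in(\Gamma^-_l,\Gamma^+_l)$ for some $2\le l\le[n/2]$. Let $l$ be the corresponding index. The first step is to invoke the matching item of Theorem \ref{thm5.1}, which gives $\mathcal{A}(R_l,e)>0$ on the domain $\bar D(\omega,2\pi)$ for \emph{every} $\omega\in\mathbb{U}$. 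In particular $\ker\mathcal{A}(R_l,e)=\{0\}$ on each such domain, so $\nu_\omega(\mathcal{A}(R_l,e))=0$ and $\phi_\omega(\mathcal{A}(R_l,e))=0$ for all $\omega\in\mathbb{U}$.

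Next I would pass from the operator to the path $\ga_l$. By the decomposition \eqref{4.76} the operator $\mathcal{A}(R_l,e)$ is precisely the criteria operator attached to the Hamiltonian sub-system whose fundamental solution is $\ga_l$, so Lemma \ref{4.1} applies and yields $\nu_\omega(\ga_l)=\nu_\omega(\mathcal{A}(R_l,e))=0$ for all $\omega\in\mathbb{U}$. Since $\nu_\omega(\ga_l)=\dim\ker(\ga_l(2\pi)-\omega I)$, this says $\ga_l(2\pi)$ has no eigenvalue on the unit circle $\mathbb{U}$, i.e. $\ga_l(2\pi)$ is hyperbolic; equivalently one may quote the second statement of Theorem \ref{4.1.1} applied to the block $\mathcal{A}(R_l,e)$. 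Finally, by Theorem \ref{thm3.4} the monodromy of the essential part factors as $\ga(2\pi)=\ga_1(2\pi)\diamond\cdots\diamond\ga_{[n/2]}(2\pi)$, and the spectrum of a $\diamond$-sum is the union of the spectra of its summands; hence $\ga(2\pi)$ carries the off-circle eigenvalues of the hyperbolic factor $\ga_l(2\pi)$, so $\ga(2\pi)=M|E_3$ is not spectrally stable. By the definition of stability of an ERE recalled in the introduction, the $(1+n)$-gon ERE is unstable, and since none of the above steps used $e$ beyond $e\in[0,1)$, this holds for all such $e$.

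I do not expect a genuine obstacle here, since all the analytic work is already contained in Theorem \ref{thm5.1} (and in the auxiliary Lemma \ref{lem4.3} and Corollary \ref{cor4.10} used in its proof). The only two points needing a line of care are: (i) that positive definiteness of $\mathcal{A}(R_l,e)$ on \emph{every} $\omega$-domain is exactly the hyperbolicity criterion for $\ga_l(2\pi)$, which is immediate from Lemma \ref{4.1} once one recalls $\nu_\omega(\ga_l)=\dim\ker(\ga_l(2\pi)-\omega I)$; and (ii) that a hyperbolic diagonal block of a $\diamond$-sum forces the whole sum to be non-spectrally-stable, which is just the elementary fact $\sigma(M_1\diamond M_2)=\sigma(M_1)\cup\sigma(M_2)$. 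Assembling these gives the corollary; one may also simply remark that it is the instability reformulation of Theorem \ref{th1.2}, whose content is Theorem \ref{thm5.1}.
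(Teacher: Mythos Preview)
Your proposal is correct and follows essentially the same approach as the paper's own proof: invoke Theorem \ref{thm5.1} to get $\mathcal{A}(R_l,e)>0$ for all $\omega\in\mathbb{U}$, apply Theorem \ref{4.1.1} (or equivalently Lemma \ref{4.1}) to conclude $\ga_l(2\pi)$ is hyperbolic, and then use the $\diamond$-decomposition $\ga(2\pi)=\ga_1(2\pi)\diamond\cdots\diamond\ga_{[n/2]}(2\pi)$ to deduce instability. Your version simply spells out in more detail the two points the paper leaves implicit, namely why positive definiteness for all $\omega$ forces hyperbolicity and why one hyperbolic $\diamond$-factor kills spectral stability of the sum.
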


\begin{proof}
From Theorem \ref{thm5.1}, it is easy to see that when $m\in [\Gamma^-_1, \Gamma^+_1)$ or
$m\in(\Gamma^-_l, \Gamma^+_l)$ for some integer $l\in [2,[\frac{n}{2}]]$,
at least one of the operators $\mathcal{A}(R_l,e), 1\leq l\leq [\frac{n}{2}]$ is positive definite
in $\bar{D}_{2}(\omega,2\pi)$ for all $\omega\in\mathbb{U}$. By Theorem \ref{4.1.1} this implies
that the monodromy matrix $\gamma_1(2\pi)$ or $\gamma_{l}(2\pi)$ is hyperbolic. Hence
$\gamma(2\pi)=\gamma_{1}(2\pi)\diamond\gamma_{2}(2\pi)\diamond\cdots\diamond\gamma_{[\frac{n}{2}]}(2\pi)$
is unstable.
\end{proof}

Then Theorem \ref{th1.2} follows from Theorem \ref{4.1.1} and Corollary \ref{cor4.11}.

Now by direct computations, for n=3, 4, 5, 6, 7, 8, we give the region of the mass parameter
$m$ such that the ERE of the $(1+n)$-gon system is unstable.
\begin{center}
\begin{tabular}{|c|c|c|c|c|}
  \hline
  % after \\: \hline or \cline{col1-col2} \cline{col3-col4} ...
   $(1+n)$-gon: & $\mathcal{A}(R_1,e)>0:$ & $\mathcal{A}(R_2,e)>0:$ & $\mathcal{A}(R_3,e)>0:$ & $\mathcal{A}(R_4,e)>0:$  \\
   $n=3$ & [0, 0.0722) & $\backslash$ & $\backslash$ & $\backslash$ \\
   $n=4$ & [0, 0.1768) & [0, 1.7755) & $\backslash$ & $\backslash$ \\
   $n=5$ & [0, 0.3035) & (0.2613, 3.3148) & $\backslash$ & $\backslash$ \\
   $n=6$ & [0, 0.4472) & (0.5858, 5.0850) & (1.0395, 6.3847) & $\backslash$ \\
   $n=7$ & [0, 0.6047) & (0.9586, 7.0430) & (1.8208, 9.9554) & $\backslash$ \\
   $n=8$ & [0, 0.7740) & (1.3720, 9.1598) & (2.8472, 13.9383) & (2.8969, 15.6593) \\
  \hline
\end{tabular}
\end{center}

Moreover, from the table above, we have much stronger results for $n=3$, $4$, and $5$ as listed
below Theorem \ref{th1.2}.

\medskip

%\noindent {\bf Acknowledgements.}

\end{document}